\documentclass[10pt,leqno]{amsart}
\setlength{\textwidth}{14.8cm}
\setlength{\textheight}{21.3cm}
\hoffset=-35pt
\usepackage{amsfonts,amssymb}
\usepackage{color}
\usepackage{graphicx}
\usepackage{dsfont}
\usepackage{tikz-cd}

\newtheorem{theorem}{Theorem}[section]
\newtheorem{lemma}[theorem]{Lemma}
\newtheorem{corollary}[theorem]{Corollary}
\newtheorem{proposition}[theorem]{Proposition}
\newtheorem{assumption}[theorem]{Assumption}

\theoremstyle{definition}
\newtheorem{definition}[theorem]{Definition}

\newtheorem{remark}[theorem]{Remark}

\numberwithin{equation}{section}

\newcommand{\R}{{\mathbb R}}

\newcommand{\N}{{\mathbb N}}

\newcommand{\cW}{{\cal W}}

\renewcommand{\d}{{\mathrm d}}
\renewcommand{\u}{{\mathfrak u}}
\newcommand{\w}{{\mathfrak w}}

\newcommand{\dist}{\operatorname{dist}}

\renewcommand{\epsilon}{\varepsilon}
\newcommand{\cal}[1]{{\mathcal{#1}}}

\newcommand{\supp}{\operatorname{supp}}
\newcommand{\diam}{\operatorname{diam}}
\renewcommand{\i}{i}

\newcommand{\x}{{\mathrm x}}
\newcommand{\y}{{\mathrm y}}
\newcommand{\Hd}{\mathcal{H}_{d-1}}
\renewcommand{\H}{\mathcal{H}}

\newcommand{\eps}{\varepsilon}

\newcommand{\dd}{{\mathrm{d}}}

\begin{document}

\title[Hardy's inequality]{Hardy's inequality for functions vanishing on a part
of the boundary}

\author{Moritz Egert}
\address{Technische Universit\"at Darmstadt, Fachbereich
Mathematik, Schlossgartenstr.\@ 7, D-64298 Darmstadt, Germany}
\email{egert@mathematik.tu-darmstadt.de}

\author{Robert Haller-Dintelmann}
\address{Technische Universit\"at Darmstadt, Fachbereich
Mathematik, Schlossgartenstr.\@ 7, D-64298 Darmstadt, Germany}
\email{haller@mathematik.tu-darmstadt.de}

\author{Joachim Rehberg}
\address{Weierstrass Institute for Applied Analysis and Stochastics,
 Mohrenstr.\@ 39, D-10117 Berlin, Germany}
\email{rehberg@wias-berlin.de}

\subjclass[2010]{26D10, 26D15, 42B20, 42B37}
\date{\today}
\keywords{Hardy's inequality, uniform fatness, Poincar\'{e}'s inequality,
Sobolev extension operator}
\thanks{The first author was supported by ``Studienstiftung des deutschen Volkes''.}

\begin{abstract}
We develop a geometric framework for Hardy's inequality on a bounded domain
when the functions do vanish only on a closed portion of the boundary.
\end{abstract}

\maketitle

%
%
%
\section{Introduction}

\noindent Hardy's inequality is one of the classical items in analysis 
\cite{kufner, opic}. Two milestones among many others in the development of
the theory seem to be the result of Necas \cite{necas} that Hardy's inequality
holds on strongly Lipschitz domains  and the insight of Maz'ya \cite{maz1960},
\cite[Ch.~2.3]{mazsob} that its validity depends on measure theoretic conditions
on the domain. Rather recently, the geometric framework in which Hardy's
inequality remains valid was enlarged up to the frontiers of what is possible --
as long as the boundary condition is purely Dirichlet, see  \cite{korte, juha},
compare also \cite{Anc,lewis,Wannebo}. Moreover, over the last years it became
manifest that Hardy's inequality plays an eminent role in modern PDE theory,
see e.g.\@ \cite{caza, vandenBerg, rakoto, aermark, ekholm, dambrosio, garcia,
kang, lisk, marcus}.

What has not been treated systematically is the case where \emph{only a part}
$D$ of the boundary of the underlying domain $\Omega$ is involved, reflecting
the Dirichlet condition of the equation on this part -- while on $\partial
\Omega \setminus D$ other boundary conditions may be imposed, compare
\cite{ed/kuf,kovarik,aermark,korol,chechkin} including references therein. The
aim of
this paper is to set up a geometric framework for the domain $\Omega$ and the
Dirichlet boundary part $D$ that allow to deduce the corresponding Hardy
inequality
\begin{align*}
 \int_\Omega \left| \frac{u}{\dist_D} \right|^p \; \dd \mathrm{x} \le c
	\int_\Omega |\nabla u|^p \; \dd \mathrm{x}.
\end{align*}
As in the well established case $D = \partial \Omega$ we in
essence only require that $D$ is $l$-thick in the sense of \cite{juha}. In
our context this condition can be understood as an extremely weak compatibility
condition between $D$ and $\partial \Omega \setminus D$.

Our strategy of proof is first to reduce to the case $D = \partial \Omega$ by
purely topological means, provided two major tools
are applicable: An extension operator $\mathfrak E : W^{1,p}_D(\Omega) \to
W^{1,p}_D(\R^d)$, the subscript $D$ indicating the subspace of those Sobolev
functions which vanish on $D$ in an appropriate sense, and a Poincar\'{e}
inequality on $W^{1,p}_D(\Omega)$. This abstract result is established in
Section~\ref{sec-proof}. In a second step in Sections~\ref{extens} and
\ref{sec-poincare} these partly implicit conditions are substantiated by more
geometric assumptions that can be checked -- more or less -- by appearance. In
particular, we prove that under the mere assumption that $D$ is closed, every
linear continuous extension operator $W_D^{1,p}(\Omega) \to W^{1,p}(\R^d)$ that
is constructed by the usual procedure of gluing together local extension
operators preserves the Dirichlet condition on $D$. This result even carries
over to higher-order Sobolev spaces and sheds new light on some of the deep
results on Sobolev extension operators obtained in \cite{mitrea}. 

It is of course natural to ask, whether Hardy's inequality also
characterizes the space $W^{1,p}_D(\Omega)$, i.e.\ whether the latter is
precisely the space of those functions $u \in W^{1,p}(\Omega)$ for which
$u/\dist_D$ belongs to $L^p(\Omega)$. Under very mild geometric assumptions
we answer this question to the affirmative in Section~\ref{sec-conv-hardy}.

Finally, in Section~\ref{sec-general} we attend to the naive intuition that
the part of $\partial \Omega$ that is far away from $D$ should only be
circumstantial for the validity of Hardy's inequality and in fact we succeed to
weaken the previously discussed geometric assumptions considerably.

\section{Notation}
\noindent Throughout we work in Euclidean space $\R^d$, $d \geq 1$. We use
$\x$, $\y$, etc.\ for vectors in $\R^d$ and denote the open ball in $\R^d$
around $\x$ with radius $r$ by $B(\x, r)$. The letter $c$ is reserved for
generic constants that may change their value from occurrence to
occurrence. Given $F \subset \R^d$ we write $\dist_F$ for the function that
measures the distance to $F$ and $\diam(F)$ for the diameter of $F$.

In our main results on Hardy's inequality we denote the underlying domain and
its Dirichlet part by $\Omega$ and $D$. The various side results that are
interesting in themselves and drop off on the way are identified by the use of
$\Lambda$ and $E$ instead.

Next, let us introduce the common first-order Sobolev spaces of functions
`vanishing' on a part of the closure of the underlying domain that are most
essential for the formulation of Hardy's inequality.

\begin{definition}\label{d-WkpD}
If $\Lambda$ is an open subset of $\R^d$ and $E$ is a closed subset of
$\overline \Lambda$, then for $p \in {[1,\infty[}$ the space
$W^{1,p}_E(\Lambda)$ is defined as the completion of
 \begin{align*}
   C^\infty_E(\Lambda) := \{ v|_\Lambda : v \in C_0^\infty(\R^d),
	\; \supp(v) \cap E = \emptyset \}
 \end{align*}
with respect to the norm $v \mapsto \bigl( \int_\Lambda |\nabla v|^p +
|v|^p \; \d \x \bigr)^{1/p}$. More generally, for $k \in \N$ we define
$W^{k,p}_E(\Lambda)$ as the closure of $C^\infty_E(\Lambda)$ with respect to
the norm $v \mapsto \bigl( \int_\Lambda \sum_{j=0}^k |D^j v|^p \; \d \x
\bigr)^{1/p}$. 
\end{definition}

The situation we have in mind is of course when $\Lambda = \Omega$ and $E = D$
is the Dirichlet part $D$ of the boundary $\partial \Omega$. 

As usual, the Sobolev spaces $W^{k,p}(\Lambda)$ are defined as the
space of those
$L^p(\Lambda)$ functions whose distributional derivatives up to order $k$ are in
$L^p(\Lambda)$, equipped with the natural norm. Note that by definition
$W^{k,p}_0(\Lambda) = W^{k,p}_{\partial \Lambda}(\Lambda)$ but in general
$W^{k,p}_\emptyset(\Lambda) \subsetneq W^{k,p}(\Lambda)$,
cf.\ \cite[Sec.~1.1.6]{mazsob}

\section{Main results} \label{sec-hart}
\noindent The following version of Hardy's inequality for functions vanishing on
a part of the boundary is our main result. Readers not familiar with the
measure theoretic concepts used to describe the regularity of the Dirichlet
part $D$ may refer to Section~\ref{subsec-concepts for D}
beforehand.

\begin{theorem} \label{t-hardy}
 Let $\Omega \subset \R^d$ be a bounded domain, $D \subset \partial \Omega$ be a
 closed part of the boundary and $p \in {]1,\infty[}$. Suppose that the
following
 three conditions are satisfied.
 \begin{enumerate} 
 \item \label{t-hardy:i} The set $D$ is $l$-thick for some $l \in
{]d-p,d]}$.
 \item \label{t-hardy:ii} The space $W^{1,p}_D(\Omega)$ can be equivalently
	normed by $\|\nabla \cdot \|_{L^p(\Omega)}$.
 \item \label{t-hardy:iii} There is a linear continuous extension operator
$\mathfrak E :
	W^{1,p}_D(\Omega) \to W^{1,p}_D(\R^d)$.
 \end{enumerate}
 Then there is a constant $c>0$ such that Hardy's inequality
 \begin{align}\label{e-hardy0}
  \int_\Omega \left| \frac{u}{\dist_D} \right|^p \; \dd \mathrm{x} \le c
	\int_\Omega |\nabla u|^p \; \dd \mathrm{x}
 \end{align}
 holds for all $u \in W^{1,p}_D(\Omega)$.
\end{theorem}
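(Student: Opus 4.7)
The plan is to use the three hypotheses to reduce the statement to the classical Hardy inequality on the whole space $\R^d$ for functions vanishing on the $l$-thick set $D$, which is available under the present thickness assumption (see e.g.\ Koskela--Zhong/Lehrb\"ack as cited in \cite{juha}). Concretely, given $u \in W^{1,p}_D(\Omega)$ I would first extend it by setting $\tilde u := \mathfrak E u \in W^{1,p}_D(\R^d)$, which by hypothesis \eqref{t-hardy:iii} satisfies
\begin{align*}
 \|\tilde u\|_{W^{1,p}(\R^d)} \le \|\mathfrak E\|\, \|u\|_{W^{1,p}(\Omega)}.
\end{align*}
Since $D$ is closed in $\partial\Omega$ and the members of $C^\infty_D(\R^d)$ vanish in a neighbourhood of $D$, a density argument combined with the thickness hypothesis \eqref{t-hardy:i} (which implies $|D| = 0$ at the relevant capacity level, so that $\dist_D > 0$ almost everywhere) yields the global Hardy estimate
\begin{align*}
 \int_{\R^d} \left|\frac{\tilde u}{\dist_D}\right|^p \dd\x \le c_0 \int_{\R^d} |\nabla \tilde u|^p \dd\x
\end{align*}
for every $\tilde u \in W^{1,p}_D(\R^d)$.

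Restricting the left-hand side to $\Omega \subset \R^d$ and using $\tilde u|_\Omega = u$ together with the continuity of $\mathfrak E$ gives
\begin{align*}
 \int_{\Omega} \left|\frac{u}{\dist_D}\right|^p \dd\x
	\le \int_{\R^d} \left|\frac{\tilde u}{\dist_D}\right|^p \dd\x
	\le c_0 \|\nabla \tilde u\|_{L^p(\R^d)}^p
	\le c_1 \|u\|_{W^{1,p}(\Omega)}^p.
\end{align*}
Finally, invoking hypothesis \eqref{t-hardy:ii} — the equivalence of the full Sobolev norm and $\|\nabla \cdot\|_{L^p(\Omega)}$ on $W^{1,p}_D(\Omega)$ — the last term is controlled by $c_2 \|\nabla u\|_{L^p(\Omega)}^p$, producing the asserted inequality \eqref{e-hardy0} with $c = c_0 \|\mathfrak E\|^p \, c_2$.

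The conceptually non-trivial step is the global Hardy inequality on $\R^d$ under the thickness condition on $D$: here one exploits that $l$-thickness of $D$ with $l > d-p$ is equivalent to uniform $p$-fatness, which by the Lewis/Koskela--Zhong theorem guarantees a pointwise Hardy inequality along $D$ that integrates to the displayed estimate. Everything else (density of $C^\infty_D$-functions, continuity of the extension, Poincar\'e) is a clean bookkeeping of the hypotheses. The Poincar\'e step is, notably, only needed so that the right-hand side can be expressed purely in terms of the gradient of $u$ on $\Omega$ rather than its full $W^{1,p}$-norm; without it one would still obtain the weaker estimate with $\|u\|_{W^{1,p}(\Omega)}^p$ on the right.
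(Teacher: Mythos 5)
Your architecture (extend by $\mathfrak E$, apply a Hardy inequality relative to $D$ on a larger set, restrict, finish with Poincar\'e) is the right one, but the pivotal step --- the ``global Hardy estimate'' $\int_{\R^d}|\tilde u/\dist_D|^p\,\dd\x \le c_0\int_{\R^d}|\nabla \tilde u|^p\,\dd\x$ for all $\tilde u\in W^{1,p}_D(\R^d)$ --- is false in the generality you need, and the fatness argument you invoke for it does not apply. The Lewis/Koskela--Zhong/Lehrb\"ack machinery requires the complement of the open set $\R^d\setminus D$, i.e.\ the compact set $D$ itself, to be uniformly $p$-fat at \emph{all} scales $r>0$, whereas hypothesis \ref{t-hardy:i} only provides thickness at scales $r\le R$; for a bounded set, fatness necessarily degenerates at large scales (except when $p>d$). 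This is not a technicality: take $p=d=2$ and $D$ a line segment (a $1$-set, hence $1$-thick with $1>d-p=0$), and let $\tilde u_n$ be $1$ on $B(0,2)$ minus a fixed neighbourhood of $D$, decaying like $\log(R_n/|\x|)/\log R_n$ for $2\le |\x|\le R_n$, and vanishing near $D$ and outside $B(0,R_n)$. Then $\int_{\R^2}|\nabla \tilde u_n|^2\,\dd\x$ stays bounded while $\int_{\R^2}|\tilde u_n/\dist_D|^2\,\dd\x\gtrsim \log R_n\to\infty$, since $\dist_D(\x)\approx|\x|$ on the annulus. (For $p<d$ one could rescue the far field with the classical Hardy inequality for the weight $|\x|^{-p}$, but the theorem claims all $p\in{]1,\infty[}$, and the critical case $p=d$ is the one most relevant for applications.)

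The missing idea is a localization to a \emph{bounded} set before the pure-Dirichlet Hardy inequality is applied, and this is where the real work of the paper's proof lies. One fixes a ball $B\supseteq\overline\Omega$, multiplies $\mathfrak E u$ by a cutoff $\eta\in C_0^\infty(B)$, and constructs a bounded domain $\Omega_\bullet$ with $\Omega\subseteq\Omega_\bullet\subseteq B$ and $\partial\Omega_\bullet=D$ or $D\cup\partial B$; one cannot simply take $B\setminus D$, which need not be connected (Remark~\ref{r-pointiii}), so a genuine topological argument (Lemma~\ref{l-1}, Corollary~\ref{c-002HE}) is required. Then $\eta\,\mathfrak E u\in W^{1,p}_0(\Omega_\bullet)$, the boundary $\partial\Omega_\bullet$ is $m$-thick for some $m\in{]d-p,d-1]}$ by Lemmas~\ref{l-lsets with hausdorff content} and \ref{l-thickness for smaller values}, and Proposition~\ref{p-juha} --- Lehrb\"ack's result for bounded domains, where small-scale thickness suffices --- applies. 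Since $\dist_{\partial\Omega_\bullet}\le\dist_D$ on $\Omega$, restriction and your remaining bookkeeping (operator norm of $\mathfrak E$, Poincar\'e via hypothesis \ref{t-hardy:ii}) then go through exactly as you wrote them.
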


Of course the conditions \ref{t-hardy:ii} and \ref{t-hardy:iii} in
Theorem~\ref{t-hardy} are rather abstract and should be supported by more
geometrical ones. This will be the content of Sections~\ref{extens} and
\ref{sec-poincare} where we shall give an extensive kit of such conditions. In
particular, we will obtain the following version of
Hardy's inequality. 

\begin{theorem}[A special Hardy inequality] \label{t-hardy concrete}
 Let $\Omega \subset \R^d$ be a bounded domain and $p \in {]1,\infty[}$. Let $D
 \subset \partial \Omega$ be $l$-thick for some $l \in {]d-p,d]}$ and assume
 that for every $\x \in \overline{\partial\Omega \setminus D}$ there is an open
 neighborhood $U_\x$ of $\x$ such that $\Omega \cap U_\mathrm x$ is a
 $W^{1,p}$-extension domain. Then there is a constant $c >0$ such that
 \begin{align*}
  \int_\Omega \left| \frac{u}{\dist_D} \right|^p \; \dd \mathrm{x} \le c
	\int_\Omega |\nabla u|^p \; \dd \mathrm{x}, \quad u \in
 W_D^{1,p}(\Omega).
 \end{align*}
\end{theorem}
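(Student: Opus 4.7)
The plan is to reduce Theorem~\ref{t-hardy concrete} to Theorem~\ref{t-hardy} by verifying its three hypotheses from the given geometric data. Condition~\ref{t-hardy:i} is assumed verbatim, so only~\ref{t-hardy:ii} (the Poincaré-type equivalence) and~\ref{t-hardy:iii} (the existence of a $W^{1,p}_D$-extension operator) remain to be established. Both of these verifications are promised by the abstract machinery developed in Sections~\ref{extens} and~\ref{sec-poincare}, and the proof will amount to assembling the ingredients correctly.

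For condition~\ref{t-hardy:iii} I would construct the extension operator by a partition-of-unity / gluing procedure. Since $\overline{\partial\Omega\setminus D}$ is compact (as $\Omega$ is bounded and $D$ is closed), the given family $\{U_\x\}_{\x \in \overline{\partial\Omega\setminus D}}$ admits a finite subcover $U_1,\dots,U_N$. Complement it by an open set $U_0 \Subset \Omega$ covering the interior and by open sets $V_1,\dots,V_M$ around points of $D \setminus \bigcup_j U_j$ such that $V_m \cap \Omega$ can be extended trivially (functions in $W^{1,p}_D(\Omega)$ locally vanish near $D$). Choose a subordinate smooth partition of unity $\{\eta_j, \chi_0, \rho_m\}$. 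Given $u \in W^{1,p}_D(\Omega)$, on each $U_j$ use the $W^{1,p}$-extension property of $\Omega \cap U_j$ to produce $E_j u \in W^{1,p}(\R^d)$; on $V_m$ extend $u$ by $0$ across $D$; on $U_0$ take $u$ itself. The global extension is then
\begin{align*}
 \mathfrak E u := \chi_0 u + \sum_{j=1}^N \eta_j\, E_j u + \sum_{m=1}^M \rho_m \cdot 0.
\end{align*}
The nontrivial point is that $\mathfrak E u$ lies not merely in $W^{1,p}(\R^d)$ but in $W^{1,p}_D(\R^d)$, i.e.\ that the Dirichlet condition on $D$ is preserved by the gluing. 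This is exactly what is announced in the introduction: under the sole assumption that $D$ is closed, the usual local-to-global construction automatically respects the vanishing on $D$. I would invoke the corresponding theorem from Section~\ref{extens}, which covers the case at hand since $\chi_0$, $\eta_j$ and $\rho_m$ have supports disjoint from $D$ except for the $\rho_m$-terms (which, however, multiply zero).

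For condition~\ref{t-hardy:ii} I would appeal to the Poincaré inequality established in Section~\ref{sec-poincare}. Its standing hypothesis is that $D$ is $l$-thick with $l > d-p$, ensuring that $D$ carries enough $(1,p)$-capacity to prevent nontrivial constants from lying in $W^{1,p}_D(\Omega)$ and to force the quantitative estimate $\|u\|_{L^p(\Omega)} \le c \|\nabla u\|_{L^p(\Omega)}$. Combined with the just-constructed extension operator, this is a textbook consequence of the corresponding global Poincaré inequality on $W^{1,p}_D(\R^d)$ applied to $\mathfrak E u$ on a large ball containing $\Omega$.

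The main obstacle is the extension step, specifically the verification that the glued function belongs to $W^{1,p}_D(\R^d)$ and not only to $W^{1,p}(\R^d)$. Locally at boundary points $\x \in D \cap \overline{\partial\Omega\setminus D}$ the two types of patches meet, and one must check that the local $W^{1,p}$-extensions $E_j u$—which a priori are indifferent to the Dirichlet condition—do not destroy the vanishing on $D$ after multiplication by the cutoff $\eta_j$. This is precisely the content of the preservation-of-Dirichlet-condition result cited above, and with that in hand all three conditions of Theorem~\ref{t-hardy} are met, yielding the claimed Hardy inequality.
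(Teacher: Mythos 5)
Your proposal is correct and follows essentially the same route as the paper: Theorem~\ref{t-hardy concrete} is obtained by feeding Theorem~\ref{t-extension always preserves trace} (for condition \ref{t-hardy:iii}) and the Poincar\'e inequality of Proposition~\ref{p-concrete_poincare} (for condition \ref{t-hardy:ii}) into Theorem~\ref{t-hardy}, which is exactly the synthesis the paper performs. The only (inessential) deviation is your closing remark that the Poincar\'e inequality is a ``textbook consequence of a global Poincar\'e inequality on $W^{1,p}_D(\R^d)$ applied to $\mathfrak{E}u$'' -- that route would only yield $\|u\|_{L^p}\le c(\|u\|_{L^p}+\|\nabla u\|_{L^p})$ without further work, whereas the paper actually derives Proposition~\ref{p-concrete_poincare} from the compactness argument of Proposition~\ref{t-constant} together with Lemma~\ref{l-constantzero}(ii); since you also cite the section's Poincar\'e inequality directly, whose hypotheses are verbatim those of the theorem, this does not affect the validity of your argument.
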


\begin{remark}
The assumptions in the above theorem are met for all $p \in {]1,\infty[}$ if $D$
is a $(d-1)$-set and for every $\x \in \overline{\partial\Omega \setminus D}$
there is an open neighborhood $U_\x$ of $\x$ such that $\Omega \cap U_\mathrm x$
is a Lipschitz- or more generally an $(\eps,\delta)$-domain, see
Subsections~\ref{subsec-concepts for D} and \ref{Subsec: geometric
conditions} for definitions.
\end{remark}

Still, as we believe, the abstract framework traced out by the second and the
third condition of Theorem~\ref{t-hardy} has the advantage that other sufficient
geometric conditions for Hardy's inequality -- tailor-suited for future
applications -- can be found much more easily. In fact the second condition is
equivalent to the validity of Poincar\'e's inequality
 \begin{align*}
  \|u\|_{L^p} \le c \|\nabla u\|_{L^p(\Omega)},
  \quad u \in W^{1,p}_D(\Omega),
 \end{align*}
that is clearly \emph{necessary} for Hardy's inequality \eqref{e-hardy0}. We
give a detailed discussion of Poincar\'e's inequality within the
present context in Section~\ref{sec-poincare}. For further reference the reader
may consult \cite[Ch.\ 4]{ziemer}. Concerning the third condition note carefully
that we require the extension operator to preserve the Dirichlet boundary
condition on $D$. Whereas extension of Sobolev functions is a well-established
business, the preservation of traces is much more delicate and we devote
Subsection~\ref{Subsec: preservation of traces} to this problem.

It is interesting to remark that under geometric assumptions very similar to
those in
Theorem~\ref{t-hardy concrete} the space $W_D^{1,p}(\Omega)$ is the largest
subspace of $W^{1,p}(\Omega)$ in which Hardy's inequality can hold. This is
made precise by our third main result.

\begin{theorem} \label{t-converse hardy}
Let $\Omega \subset \R^d$ be a bounded domain and $p \in {]1,\infty[}$. Let $D
\subset \partial \Omega$ be porous and $l$-thick for some $l \in {]d-p,d]}$.
Finally assume that for every $\x \in \overline{\partial\Omega \setminus D}$
there is an open neighborhood $U_\x$ of $\x$ such that $\Omega \cap U_\mathrm x$
is a $W^{1,p}$-extension domain. If $u \in W^{1,p}(\Omega)$ is such that
$u/\dist_D \in L^p(\Omega)$, then already $u \in W_D^{1,p}(\Omega)$.
\end{theorem}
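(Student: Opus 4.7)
Fix a smooth $\chi : \R \to [0,1]$ with $\chi \equiv 0$ on $(-\infty, 1]$ and $\chi \equiv 1$ on $[2,\infty)$, set $\eta_\epsilon(\x) := \chi(\dist_D(\x)/\epsilon)$, and write $v_\epsilon := \eta_\epsilon u$. The plan is to show $v_\epsilon \to u$ in $W^{1,p}(\Omega)$ and $v_\epsilon \in W^{1,p}_D(\Omega)$ for every $\epsilon > 0$, the latter by exhibiting $v_\epsilon$ as the $\Omega$-restriction of some $\widetilde v_\epsilon \in W^{1,p}(\R^d)$ that vanishes on an open neighborhood of $D$ and then mollifying; the closedness of $W^{1,p}_D(\Omega)$ in $W^{1,p}(\Omega)$ then delivers $u \in W^{1,p}_D(\Omega)$. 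The convergence $v_\epsilon \to u$ follows from the pointwise bound $|\nabla \eta_\epsilon| \le 2 \|\chi'\|_\infty / \dist_D$ on the (thin) support of $\nabla \eta_\epsilon$, the hypothesis $u/\dist_D \in L^p(\Omega)$, the fact that $\dist_D > 0$ everywhere in the open set $\Omega$, and dominated convergence.

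To produce $\widetilde v_\epsilon$ for fixed $\epsilon > 0$, I cover the compact set $\overline\Omega$ by three kinds of open sets: the set $V_0 := \{\dist_D < \epsilon/2\}$ (on which $v_\epsilon \equiv 0$); finitely many relatively compact balls inside $\Omega$; and finitely many neighborhoods $U_{\x_i}$ supplied by the hypothesis, at points $\x_i \in \partial\Omega \setminus D$. This last selection is legitimate because $\overline\Omega \setminus V_0$ meets $\partial\Omega$ only within $\partial\Omega \setminus D \subset \overline{\partial\Omega \setminus D}$. Choose a subordinate smooth partition of unity $\{\psi_j\}$ and, for each $j$, an auxiliary cutoff $\tilde\psi_j \in C_0^\infty(\R^d)$ equal to $1$ on $\supp \psi_j$ and supported in the same cover element. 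The $V_0$-piece of $v_\epsilon$ is zero; each interior-ball piece extends trivially by zero to $W^{1,p}(\R^d)$; and each piece supported in some $U_{\x_i}$ is first extended by applying the local $W^{1,p}$-extension operator to $\psi_j v_\epsilon \in W^{1,p}(\Omega \cap U_{\x_i})$ and then multiplied by $\tilde\psi_j$ to confine its support to $U_{\x_i}$. Summing yields $\hat v_\epsilon \in W^{1,p}(\R^d)$ with $\hat v_\epsilon|_\Omega = v_\epsilon$. Setting $\widetilde v_\epsilon := \eta_{\epsilon/4} \hat v_\epsilon$ then produces a function in $W^{1,p}(\R^d)$ that vanishes on $\{\dist_D < \epsilon/4\}$ and still satisfies $\widetilde v_\epsilon|_\Omega = v_\epsilon$, because $v_\epsilon$ itself vanishes wherever $\eta_{\epsilon/4}$ does.

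The subtle step is precisely this construction of $\widetilde v_\epsilon$: the local extensions return a $W^{1,p}(\R^d)$ representative with no a priori control near $D$, and the a posteriori cutoff by $\eta_{\epsilon/4}$ succeeds only because the truncation $v_\epsilon$ was designed with a buffer $\{\epsilon/4 \le \dist_D \le \epsilon\}$ on which it already vanishes. The remaining step --- mollifying $\widetilde v_\epsilon$ and multiplying by a fixed $\kappa \in C_0^\infty(\R^d)$ equal to $1$ on $\overline\Omega$ to obtain $C_0^\infty(\R^d)$ approximants whose supports are eventually disjoint from $D$ and whose restrictions to $\Omega$ converge to $v_\epsilon$ in $W^{1,p}(\Omega)$ --- is then routine. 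The porosity and $l$-thickness hypotheses on $D$ do not appear to enter this argument; I expect them to be stated only to ensure that $u/\dist_D \in L^p(\Omega)$ is a nontrivial condition in the first place (for instance by forcing $D$ to have empty interior).
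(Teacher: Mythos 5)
Your proof is correct, but it takes a genuinely different -- and considerably more elementary -- route than the paper. The paper splits $u = v + w$ with $v = \eta u$ supported away from $\overline{\partial\Omega\setminus D}$ and $w = (1-\eta)u$ supported near it; $v$ is handled by the classical Edmunds--Evans result, while $w$ is extended to $\widehat{w} \in W^{1,p}(\R^d)$ and the real work is to show that the quasicontinuous representative of $\widehat{w}$ vanishes quasieverywhere on $D$. That step is where all the machinery lives: averages of $|w\log(\dist_D)|$ are controlled by building a Bessel-potential extension of $u\log(\dist_D)$ via the Jonsson--Wallin theorem (porosity of $D$ enters here, through the Aikawa dimension, to make $\log(\dist_D)$ integrable), capacities and Hausdorff contents are compared, and the resulting membership $\widehat{w} \in \bigcap_{1<r<p} W_D^{1,r}(\R^d)$ is upgraded to $W_D^{1,p}$ by the Hedberg--Kilpel\"ainen stability theorem (this is where $l$-thickness enters). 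You short-circuit all of this by spending the hypothesis $u/\dist_D \in L^p(\Omega)$ at the outset, exactly as in the classical pure Dirichlet proof: the truncation error $u\nabla\eta_\eps$ is dominated by $c\,|u|/\dist_D$ on $\{\eps \le \dist_D \le 2\eps\}$, so $v_\eps \to u$ in $W^{1,p}(\Omega)$, and since $v_\eps$ vanishes identically on a full neighborhood of $D$ intersected with $\Omega$, the local extensions need only be plain Sobolev extensions (no trace preservation), after which the crude cutoff $\eta_{\eps/4}$ and mollification produce admissible $C^\infty_D$ approximants. I checked the details -- the Lipschitz-times-Sobolev product rule, the a.e.\ chain rule for $\chi\circ\dist_D$, the buffer argument giving $(\eta_{\eps/4}\hat{v}_\eps)|_\Omega = v_\eps$, and the covering argument, which indeed only needs the sets $U_\x$ for $\x \in \partial\Omega\setminus D \subset \overline{\partial\Omega\setminus D}$ -- and they all go through. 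One consequence worth stating plainly: your argument proves a stronger theorem, since neither porosity nor $l$-thickness is used anywhere; and your guess about why they appear is not what happens in the paper, where both conditions are genuinely consumed by the capacity-theoretic method (porosity for the $\log(\dist_D)$ lemma, thickness for the stability step) rather than serving to make the hypothesis nontrivial. What the paper's longer route buys is a set of independently useful intermediate results; for the theorem as stated, your truncate-first strategy is simpler and sharper.
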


\begin{remark} \label{r-converse hardy}
\begin{enumerate}
 \item The assumption on $D$ are met if $D$ is an $l$-set for some $l \in
{]d-p,d[}$, see Remark~\ref{r-porous} below. 
 \item In the case $D = \partial \Omega$ the conclusion of
 Theorem~\ref{t-converse hardy} is classical \cite[Thm.~V.3.4]{ed/ev} and
 remains true without any assumptions on $\partial \Omega$.
\end{enumerate}
\end{remark}

In Section~\ref{sec-proof} we give the proof of the general Hardy inequality
from Theorem~\ref{t-hardy}. The proofs of Theorem~\ref{t-hardy concrete} and
\ref{t-converse hardy} are postponed to the end of Sections~\ref{extens} and
\ref{sec-conv-hardy}, respectively.

\section{Preliminaries} \label{sec-preliminaries}
\subsection{Regularity concepts for the Dirichlet part}\label{subsec-concepts
for D}

For convenience we recall the notions from geometric measure theory that are
used to describe the regularity of the Dirichlet part $D$ in Hardy's inequality.
For $l\in {]0,\infty[}$ the $l$-dimensional Hausdorff measure of $F \subset
\R^d$ is
\begin{align*}
 \mathcal H_l(F):= \liminf_{\delta \to 0} \Big \{ \sum_{j= 1}^\infty
 \diam(F_j)^l : F_j \subset \R^d,\, \diam(F_j) \leq \delta, \, F \subset
 \bigcup_{j=1}^\infty F_j \Big \}
\end{align*}
and its centered Hausdorff content is defined by
\begin{align*}
 \mathcal H_l^\infty(F):= \inf \Big \{ \sum_{j= 1}^\infty r_j^l : \x_j 
 \in F, \, r_j > 0, \, F \subset \bigcup_{j=1}^\infty B(\x_j, r_j) \Big \}.
\end{align*}

\begin{definition} \label{d-lthick}
 Let $l \in {]0,\infty[}$. A non-empty compact set $F \subset \R^d$ is called
 \emph{$l$-thick} if there exist $R > 0$ and $\gamma > 0$ such
 that 
 \begin{equation} \label{e-lthick}
 \mathcal H^\infty_l (F \cap B(\x,r) ) \ge \gamma\, r^{l}
 \end{equation}
 holds for all $\x \in F$ and all $r \in {]0,R]}$. It is called \emph{$l$-set}
 if there are two constants $c_0, c_1 >0$ such that
 \begin{align*}
  c_0 r^l \le \mathcal H_l(F \cap B(\x,r)) \le c_1 r^l
 \end{align*}
holds for all $\x \in F$ and all $r \in {]0,1]}$.
\end{definition}

\begin{remark} \label{r-othconst}
\begin{enumerate}
 \item  If \eqref{e-lthick} holds for constants $R$, $\gamma$, then for all
 $S \ge R$ it also holds with $R$ and $\gamma$ replaced by $S$ and $ \gamma R^l
 S^{-l}$, respectively. For more information on this notion of $l$-thick sets
 the reader can refer to \cite{juha}.
 \item The notion of $l$-sets is due to \cite[Sec.~II.1]{jons}. It can be
 extended literally to arbitrary Borel sets $F$, see \cite[Sec.~VII.1.1]{jons}.
\end{enumerate}
\end{remark}

\begin{definition} \label{d-porous}
A set $F \subset \R^d$ is \emph{porous} if for some $\kappa \leq 1$ the
following statement is true: For every ball $B(\x,r)$ with $\x \in \R^d$ and $0
< r \leq 1$ there is $\y \in B(\x,r)$ such that $B(\y, \kappa r) \cap F =
\emptyset$.
\end{definition}

\begin{remark} \label{r-porous}
It is known that a set $F \subset \R^d$ is porous if and only if its so-called
Assouad dimension is strictly less than the space dimension $d$, see
\cite[Thm.~5.2]{luukkainen}. Recently it was shown in \cite{juha3} that this
notion of dimension coincides with the one introduced by Aikawa, that is the
infimum of all $t>0$ for which there exists $c_t > 0$ such that
\begin{align*}
 \int_{B(\x,r)} \dist(\x,F)^{t-d} \; \d \x \leq c_t r^t, \quad \x \in F, r> 0.
\end{align*}
In particular, each $l$-set, $l \in {]0,d[}$, has Aikawa dimension equal to
$l$ and thus is porous \cite[Lem.~2.1]{juha2}.
\end{remark}

For a later use we include a proof of the following two elementary facts. We
remark that the first lemma is also implicit in \cite[Lem.~2]{carlmaz}.

\begin{lemma}
\label{l-lsets with hausdorff content}
Let $l \in {]0,\infty[}$. If $F \subset \R^d$ is a compact $l$-set, then there
are constants $c_0, c_1 > 0$ such that
\begin{align*}
c_0 r^l \le \H^\infty_l(F \cap B(\x,r)) \le c_1 r^l
\end{align*}
holds for all $r \in {]0,1[}$ and all $\x \in F$. In particular, $F$ is
$l$-thick.
\end{lemma}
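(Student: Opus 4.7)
My plan is to derive the two-sided Hausdorff content bound directly from the two-sided Hausdorff measure bound in the definition of an $l$-set. The upper bound is immediate: since $\x \in F$, the single ball $B(\x, r)$ is itself an admissible cover in the definition of the centered Hausdorff content, so $\H^\infty_l(F \cap B(\x, r)) \le r^l$, and thus $c_1 = 1$ works.

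For the lower bound I would fix $\x \in F$ and $r \in {]0,1[}$ and pick an arbitrary admissible cover $\{B(\x_j, r_j)\}_j$ of $F \cap B(\x, r)$, that is $\x_j \in F$ and $r_j > 0$. The key idea is to combine the $l$-set upper estimate on each piece $\H_l(F \cap B(\x_j, r_j))$ with countable sub-additivity of $\H_l$ and the $l$-set lower estimate on $\H_l(F \cap B(\x, r))$, getting
\[
 c_0 r^l \le \H_l(F \cap B(\x, r)) \le \sum_j \H_l(F \cap B(\x_j, r_j)) \le c_1 \sum_j r_j^l,
\]
so that $\sum_j r_j^l \ge (c_0/c_1) r^l$. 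Passing to the infimum over admissible covers then yields the desired lower bound on $\H^\infty_l(F \cap B(\x, r))$.

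The main technical point, and essentially the only obstacle, is that the $l$-set upper bound $\H_l(F \cap B(\y, s)) \le c_1 s^l$ is only assumed for $s \in {]0,1]}$, so it cannot be applied to covering balls with $r_j > 1$. I would dispose of that case separately: since $r < 1$, any single covering ball with $r_{j_0} \ge 1$ already contributes $r_{j_0}^l \ge 1 > r^l$, so the desired inequality is trivial. This pushes the final content constant down to $\min\{1,\, c_0/c_1\}$.

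The concluding $l$-thickness statement follows by merely reading off the established lower bound at $R = 1/2$ with $\gamma = \min\{1,\, c_0/c_1\}$, which reproduces condition \eqref{e-lthick} verbatim.
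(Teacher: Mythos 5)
Your proof is correct, and the substantive step -- bounding an arbitrary admissible centered-ball cover from below by combining the $l$-set upper estimate on each $\H_l(F\cap B(\x_j,r_j))$, countable subadditivity of $\H_l$, and the $l$-set lower estimate on $\H_l(F\cap B(\x,r))$, with covering balls of radius $\ge 1$ disposed of separately -- is exactly the paper's argument. The only difference is cosmetic: the paper proves the general two-sided comparison $\H^\infty_l(A)\le\H_l(A)\le c\,\H^\infty_l(A)$ for all Borel $A\subset F$ (so its upper bound passes through $\H_l$ instead of your direct single-ball cover), which changes nothing essential.
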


\begin{proof}
We prove $\H^\infty_l(A) \leq \H_l(A) \leq c
\H^\infty_l(A)$ for all non-empty Borel subsets $A \subset F$. 

First, fix $\eps > 0$ and let $\{A_j\}_{j \in \N}$ be a covering of $A$ by sets
with diameter at most $\eps$. If $A_j \cap A \neq \emptyset$, then $A_j$ is
contained in an open ball $B_j$ centered in $A$ and radius such that $r_j^l =
\diam(A_j)^l + \eps 2^{-j}$. The so-obtained countable covering $\{B_j\}$ of
$A$ satisfies
\begin{align*}
 \sum_{\substack{j \in \N \\ A_j \cap A \neq \emptyset}} \diam(A_j)^l
\geq \sum_{\substack{j \in \N \\ A_j \cap A \neq \emptyset}} (r_j^l - \eps
2^{-j}) \geq \H^\infty_l(A) - \eps.
\end{align*}
Taking the infimum over all such coverings $\{A_j\}_{j \in \N}$ and passing to
the limit $\eps \to 0$ afterwards, $\H^\infty_l(A) \leq
\H_l(A)$
follows. Conversely, let $\{B_j\}_{j \in \N}$ be a covering of $A$ by
open balls with radii $r_j$ centered in $A$. If $r_j \leq 1$, then
$\H_l(F \cap B_j) \leq c r_j^l$ since by assumption $F$ is an
$l$-set, and if $r_j > 1$, then certainly $\H_l(F \cap B_j)
\leq \H_l(F) r_j^l$. Note carefully that $0 < \H_l(F) <
\infty$ holds for $F$ can be covered by finitely many balls with radius $1$
centered in $F$. Altogether,
\begin{align*}
 \sum_{j= 1}^\infty r_j^l 
\geq c \sum_{j= 1}^\infty \H_l(F \cap B_j) 
\geq c\H_l \Big(F \cap \bigcup_{j=1}^\infty B_j \Big) \geq
c\H_l(A).
\end{align*}
Passing to the infimum, $\H^\infty_l(A) \geq c \H_l(A)$
follows.
\end{proof}

\begin{lemma}
\label{l-thickness for smaller values}
If $F \subset \R^d$ is $l$-thick, then it is $m$-thick for every $m \in
{]0,l[}$.
\end{lemma}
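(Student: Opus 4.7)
The plan is to work directly with the definition of the centered Hausdorff content and dichotomize, for an arbitrary admissible covering of $F \cap B(\x, r)$, according to whether some covering ball has radius comparable to $r$. Let $F$ be $l$-thick with constants $R > 0$ and $\gamma > 0$, fix $m \in {]0,l[}$, and take $\x \in F$ and $r \in {]0, R]}$. I would aim to establish $m$-thickness with the same threshold $R' = R$ and a constant $\gamma'$ depending only on $m$, $l$, and $\gamma$.

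Consider any covering $\{B(\y_j, r_j)\}_j$ with $\y_j \in F \cap B(\x, r)$ and $F \cap B(\x, r) \subset \bigcup_j B(\y_j, r_j)$, admissible for $\H_m^\infty(F \cap B(\x,r))$. If some $r_j \geq r/2$, then trivially $\sum_j r_j^m \geq 2^{-m} r^m$. Otherwise every $r_j < r/2$, and since $l - m > 0$ this yields $r_j^{l-m} < (r/2)^{l-m}$, hence $r_j^l < (r/2)^{l-m}\, r_j^m$ for every $j$. Summing over $j$ and noting that the same covering is admissible for $\H_l^\infty(F \cap B(\x, r))$, the $l$-thickness hypothesis gives
\begin{align*}
 \gamma\, r^l \;\leq\; \H_l^\infty(F \cap B(\x, r)) \;\leq\; \sum_j r_j^l \;<\; (r/2)^{l-m} \sum_j r_j^m,
\end{align*}
so that $\sum_j r_j^m > 2^{l-m} \gamma\, r^m$.

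Taking the infimum over admissible coverings and combining both cases, one obtains
\begin{align*}
 \H_m^\infty(F \cap B(\x, r)) \;\geq\; \gamma'\, r^m \qquad \text{with}\quad \gamma' := \min\bigl(2^{-m},\, 2^{l-m} \gamma\bigr),
\end{align*}
valid for all $\x \in F$ and $r \in {]0,R]}$, which is the desired $m$-thickness bound. The argument presents no real obstacle: the one thing to notice is that the class of admissible coverings for $\H_m^\infty(F \cap B(\x,r))$ and for $\H_l^\infty(F \cap B(\x,r))$ coincides, since admissibility depends only on the underlying set and not on the exponent. Everything then reduces to the elementary comparison of $r_j^m$ with $r_j^l$ in the two regimes $r_j \geq r/2$ and $r_j < r/2$.
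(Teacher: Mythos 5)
Your argument is correct. Every step checks out: admissibility of a covering for the centered Hausdorff content indeed depends only on the underlying set, the case of a large ball gives $\sum_j r_j^m \ge 2^{-m}r^m$ directly, and in the complementary case the pointwise bound $r_j^l < (r/2)^{l-m} r_j^m$ combined with $\gamma r^l \le \H_l^\infty(F\cap B(\x,r)) \le \sum_j r_j^l$ yields $\sum_j r_j^m > 2^{l-m}\gamma r^m$, so the infimum is bounded below by $\min\bigl(2^{-m},\,2^{l-m}\gamma\bigr) r^m$ with the same threshold $R$. The paper takes a shorter route: it invokes the single elementary inequality $\sum_{j} r_j^m \ge \bigl(\sum_{j} r_j^l\bigr)^{m/l}$ for $0<m<l$ (equivalently, the domination of the $\ell^{l/m}$-norm by the $\ell^1$-norm after substituting $a_j = r_j^m$), which applied to an arbitrary admissible covering immediately gives $\H_m^\infty(A) \ge \H_l^\infty(A)^{m/l}$ for any set $A$, hence $m$-thickness with constant $\gamma^{m/l}$ and no case distinction or restriction on the radii. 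Your dichotomy buys nothing extra here but loses nothing either; the paper's version is marginally cleaner because it produces a comparison between the two contents that is independent of the ball $B(\x,r)$, whereas your small-radius case genuinely uses the localization to a ball of radius $r$. Both proofs are complete and elementary.
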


\begin{proof}
Inspecting the definition of thick sets, the claim turns out to be a direct
consequence of the inequality
\begin{align*}
 \sum_{j=1}^N r_j^m
\geq  \Big(\sum_{j=1}^N r_j^l \Big)^{m/l} 
\end{align*}
for positive real numbers $r_1,\ldots r_N$.
\end{proof}

\subsection{Quasieverywhere defined functions}

The results of Sections~\ref{extens}-\ref{sec-conv-hardy} rely on deep
insights from potential theory and we shall recall the necessary notions
beforehand. For further background we refer e.g.\ to \cite{ad/he}.

\begin{definition}
\label{d-Besselcapacity}
Let $\alpha >0$, $p \in {]1, \infty [}$ and let $F \subset \R^d$. Denote by
$G_\alpha:=\mathcal{F}^{-1}((1+|\xi|^2)^{-\alpha/2})$ the Bessel kernel of order
$\alpha$. Then
\begin{align*}
 C_{\alpha,p}(F):= \inf \Big\{ \int_{\R^d} |f|^p : \text{$f \geq 0$ on $\R^d$
and $G_\alpha \ast f \geq 1$ on $F$} \Big\}
\end{align*}
is called \emph{$(\alpha,p)$-capacity} of $F$. The corresponding \emph{Bessel
potential space} is
\begin{align*}
 H^{\alpha,p}(\R^d): = \{G_\alpha \ast f : f \in L^p(\R^d)\} \quad \text{with
norm} \quad \|G_\alpha \ast f\|_{H^{\alpha,p}(\R^d)} = \|f\|_p.
\end{align*}
\end{definition}

It is well-known that for $k \in \N$ the spaces $H^{k,p}(\R^d)$ and
$W^{k,p}(\R^d)$ coincide up to equivalent norms \cite[Sec.~2.3.3]{triebel}. The
capacities $C_{\alpha,p}$ are outer measures on $\R^d$
\cite[Sec.~2.3]{ad/he}. A property that holds true for all $\x$ in some set $E
\subset \R^d$ but those belonging to an exceptional set $F \subset E$ with
$C_{\alpha,p}(F) = 0$ is said to be true \emph{$(\alpha,p)$-quasieverywhere} on
$E$, abbreviated \emph{$(\alpha,p)$-q.e}. A property that holds true
$(\alpha,p)$-q.e.\ also holds true $(\beta,p)$-q.e.\ if $\beta<\alpha$. This is
an easy consequence of \cite[Prop.~2.3.13]{ad/he}. A more involved result in
this direction is the following \cite[Thm.~5.5.1]{ad/he}

\begin{lemma}\label{l-transformation of capacities}
Let $\alpha, \beta > 0$ and $1 < p,q < \infty$ be such that $\beta q <
\alpha p < d$. Then each $C_{\alpha,p}$-nullset also is a $C_{\beta,q}$-nullset 
\end{lemma}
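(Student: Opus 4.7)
The statement is a monotonicity-type result for Bessel capacities across different orders, and the standard route is to bridge the two capacities through Hausdorff measures. The key observation is that the strict inequality $\beta q < \alpha p$ produces a strict inequality $d - \beta q > d - \alpha p$ between the two critical Hausdorff dimensions, and this gap is exactly what is needed to chain together the two sharp comparison results.

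First, I would invoke the classical result that, since $\alpha p < d$, every $C_{\alpha,p}$-nullset $F \subset \R^d$ satisfies $\H_s(F) = 0$ for every $s > d - \alpha p$. (This is one of the basic estimates of capacity by Hausdorff content in potential theory; in Adams--Hedberg it is a consequence of Theorem 5.1.9, and can be proved directly from the $L^p$--$L^1$ mapping properties of the Bessel potential operator together with a Frostman-type covering of $F$.) Because $\beta q < \alpha p$ implies $d - \beta q > d - \alpha p$, I can apply this with the particular choice $s := d - \beta q$, obtaining $\H_{d-\beta q}(F) = 0$; in particular $\H_{d-\beta q}(F) < \infty$.

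Second, I would invoke the converse relation between Hausdorff measure and Bessel capacity: provided $\beta q < d$ (which holds by hypothesis, since $\beta q < \alpha p < d$), any Borel set of finite $(d-\beta q)$-dimensional Hausdorff measure has vanishing $(\beta,q)$-capacity. (This is the appropriate Frostman-type implication, contained in the comparison results of Adams--Hedberg Chapter 5, in particular Theorem 5.1.13.) Combining the two implications gives $C_{\beta,q}(F) = 0$, as desired.

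The only subtle point in the above is making sure that the two comparison results are applied within the precise ranges of parameters for which they are stated, namely $0 < \alpha p < d$ on one side and $0 < \beta q < d$ on the other. Both follow at once from the assumption $0 < \beta q < \alpha p < d$, so no further work on the geometric side is required. In effect the whole lemma is a clean two-step interpolation through Hausdorff measures: \emph{capacity $\to$ Hausdorff measure $\to$ capacity}, with the gap $\alpha p - \beta q > 0$ providing the exponent $s = d - \beta q$ for the intermediate step.
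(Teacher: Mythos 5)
Your argument is correct, but it is not the paper's route: the paper gives no proof at all and simply cites \cite[Thm.~5.5.1]{ad/he}, whose proof in that reference goes through nonlinear (Wolff) potential estimates and yields a stronger, quantitative comparison of the two capacities. Your detour through Hausdorff measures is a legitimate and more elementary alternative \emph{given} the two standard comparison theorems as black boxes: the implication ``$C_{\alpha,p}$-null $\Rightarrow$ $\H_s$-null for $s>d-\alpha p$'' is precisely the paper's own Comparison Theorem (Theorem~\ref{t-comparison}), and the Frostman-type converse ``$\H_{d-\beta q}(F)<\infty$ $\Rightarrow$ $C_{\beta,q}(F)=0$'' (valid for arbitrary $F$, since one may pass to a $G_\delta$-hull of a Hausdorff nullset and use monotonicity of the capacity) closes the chain. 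As you rightly emphasize, this only works because of the \emph{strict} inequality $\beta q<\alpha p$, which opens the window $d-\alpha p<d-\beta q$ needed to insert the intermediate Hausdorff exponent; the potential-theoretic proof in Adams--Hedberg also covers borderline situations such as $\beta q=\alpha p$ with $q<p$, which your argument cannot reach but which the lemma does not require. One small slip: you appear to have the two Adams--Hedberg references interchanged --- Theorem~5.1.13 is the direction ``capacity null $\Rightarrow$ Hausdorff null'' and Theorem~5.1.9 is ``finite Hausdorff measure $\Rightarrow$ capacity null'' --- but this does not affect the mathematics.
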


There is also a close connection between capacities and Hausdorff measures, cf.\
\cite[Ch.~5.]{ad/he} for an exhaustive discussion. Most important for us is
the following comparison theorem. In the case $p \in {]1, d]}$ this is proved
in \cite[Sec.~5]{ad/he} and if $p \in {]d,\infty[}$, then the result follows
directly from \cite[Prop.~2.6.1]{ad/he}.

\begin{theorem}[Comparison Theorem]\label{t-comparison}
Let $1 < p < \infty$ and suppose $\alpha, l > 0$ are such that $d-l < \alpha p
<\infty$. Then every $C_{\alpha,p}$-nullset is also a $\H_l$- and thus a
$\H_l^\infty$-nullset.
\end{theorem}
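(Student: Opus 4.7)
The plan is to prove the contrapositive: every Borel $E \subset \R^d$ with $\H_l^\infty(E) > 0$ has $C_{\alpha,p}(E) > 0$. Since both set functions are outer regular on Borel sets, it is enough to argue for compact $E$, and for such $E$ the inequality $\H_l^\infty(E) > 0$ already implies $\H_l(E) > 0$ by a standard covering argument.

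To such a compact $E$ I would apply the classical Frostman lemma to extract a non-trivial non-negative Radon measure $\mu$ supported in $E$ satisfying the uniform growth bound $\mu(B(\x,r)) \leq r^l$ for all $\x \in \R^d$ and $r > 0$. The Bessel capacity has a well-known dual description for compact sets,
\[ C_{\alpha,p}(E)^{1/p} \geq c \, \sup\bigl\{ \mu(E) \, : \, \mu \in \mathcal{M}^+(E), \; \|G_\alpha \ast \mu\|_{L^{p'}(\R^d)} \leq 1 \bigr\}, \]
so the task reduces to checking that the Frostman measure above satisfies $G_\alpha \ast \mu \in L^{p'}(\R^d)$, where $p'$ is the Hölder conjugate of $p$.

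For this I would invoke Wolff's inequality, which bounds $\|G_\alpha \ast \mu\|_{L^{p'}(\R^d)}^{p'}$ from above by the Wolff potential
\[ \int_{\R^d} \int_0^1 \Bigl(\frac{\mu(B(\x,r))}{r^{d-\alpha p}}\Bigr)^{p'-1} \frac{\dd r}{r} \dd \mu(\x) + c\, \mu(\R^d)^{p'}. \]
The Frostman bound turns the integrand into $r^{(\alpha p + l - d)(p'-1) - 1}$ near $r = 0$, and the assumption $\alpha p > d - l$ is exactly what renders it integrable at the origin, while the truncation at $r = 1$ together with compactness of $\supp \mu$ handles the remainder (where the Bessel kernel decays exponentially anyway). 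Plugging this back into the dual formula produces $C_{\alpha,p}(E) > 0$. In the range $p > d$ the argument simplifies whenever $\alpha p > d$: then $G_\alpha \in L^{p'}(\R^d)$ by direct inspection of the Bessel kernel, so Young's inequality alone gives $G_\alpha \ast \mu \in L^{p'}(\R^d)$ for any finite Radon measure, in line with \cite[Prop.~2.6.1]{ad/he}; otherwise the Wolff argument still applies.

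The main obstacle is this potential estimate: translating the purely local Frostman growth condition $\mu(B(\x,r)) \leq r^l$ into global $L^{p'}$-integrability of $G_\alpha \ast \mu$ is the technical heart of the argument, and it is sharp precisely at the critical threshold $\alpha p = d - l$ postulated in the hypothesis. Everything else is either a routine regularity reduction or a direct appeal to classical capacity-duality machinery from \cite{ad/he}.
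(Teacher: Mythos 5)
Your argument is correct and is essentially the proof that the paper delegates to \cite[Sec.~5]{ad/he}: Frostman's lemma produces a measure with $\mu(B(\x,r))\le r^l$, the dual characterization of $C_{\alpha,p}$ on compacta reduces everything to finiteness of $\|G_\alpha\ast\mu\|_{p'}$, and Wolff's inequality converts the growth bound into the integrability condition $\alpha p>d-l$, with the case $\alpha p>d$ handled by the elementary observation that $G_\alpha\in L^{p'}(\R^d)$, exactly as in \cite[Prop.~2.6.1]{ad/he}. The only imprecision is the appeal to \emph{outer} regularity in the reduction to compact sets: what is actually needed is the inner-regularity (capacitability) statement that a Borel set of positive $\H_l^\infty$-content contains a compact subset of positive content, which is part of the standard Frostman machinery and does not affect the validity of the argument.
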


Bessel capacities naturally occur when studying convergence of average
integrals for Sobolev functions. In fact, if $\alpha > 0$, $p \in
{]1,\frac{d}{\alpha}]}$ and $u \in H^{\alpha,p}(\R^d)$, then
$(\alpha,p)$-quasievery $\y \in \R^d$ is a Lebesgue point for $u$ in the
$L^p$-sense, that is
\begin{align}
\label{e-pointwi}
	  \lim_{r \to 0} \frac {1}{|B(\y,r)|}\int_{B(\y,r)}
		u(\x) \; \d \x =: \u(\y)
\end{align}
and
\begin{align}
\label{e-Lebesgue}
	  \lim_{r \to 0} \frac {1}{|B(\y,r)|}\int_{B(\y,r)}
		|u(\x) - \u(\y)|^p \; \d \x = 0
\end{align}
hold \cite[Thm.\ 6.2.1]{ad/he}. The $(\alpha,p)$-quasieverywhere defined
function $\u$ reproduces $u$ within its $H^{\alpha,p}$-class. It gives rise to a
meaningful $(\alpha,p)$-quasieverywhere defined restriction $u|_E:= \u|_E$ of
$u$ to $E$ whenever $E$ has non-vanishing $(\alpha,p)$-capacity. For convenience
we agree upon that $u|_E = 0$ is true for all $u \in H^{\alpha,p}(\R^d)$ if $E$
has zero $(\alpha,p)$-capacity. Note also that these results remain true if $p
\in {]\frac{d}{\alpha},\infty[}$, since in this case $u$ has a H\"older
continuous representative $\u$ which then satisfies \eqref{e-pointwi} and
\eqref{e-Lebesgue} for every $\y \in \R^d$.

We obtain an alternate definition for Sobolev spaces with partially vanishing
traces.

\begin{definition}
\label{d-Sob cap space}
Let $k \in \N$, $p \in {]1, \infty [}$ and $E \subseteq \R^d$ be closed. Define
\begin{align*}
 \cW_E^{k,p}(\R^d):= \big\{u \in W^{k,p}(\R^d): &\text{ $D^\beta u|_E = 0$
 holds $(k-|\beta|,p)$-q.e.\ on $E$} \\ &\text{ for all multiindices $\beta$, $0
\leq |\beta| \leq k-1$} \big\}
\end{align*}
and equip it with the $W^{k,p}(\R^d)$-norm.
\end{definition}

The following theorem of Hedberg and Wolff is also known as the
\emph{$(k,p)$-synthesis}.

\begin{theorem}[{\cite[Thm.\ 9.1.3]{ad/he}}] \label{t-coincid}
The spaces $W_E^{k,p}(\R^d)$ and $\cW_E^{k,p}(\R^d)$ coincide whenever $k \in
\N$, $p \in {]1,\infty[}$ and $E \subset \R^d$ is closed.
\end{theorem}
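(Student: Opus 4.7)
The plan is to prove the two inclusions separately. The easy direction, $W_E^{k,p}(\R^d) \subseteq \cW_E^{k,p}(\R^d)$, I would handle by approximation: given $u \in W_E^{k,p}(\R^d)$ and a defining sequence $(u_n) \subset C^\infty_E(\R^d)$, for each multiindex $\beta$ with $|\beta| \leq k-1$ the derivative $D^\beta u_n$ converges to $D^\beta u$ in $W^{k-|\beta|,p}(\R^d)$. Using the identification $W^{k-|\beta|,p}(\R^d) \cong H^{k-|\beta|,p}(\R^d)$ together with the standard capacitary weak-type bound $C_{k-|\beta|,p}(\{|D^\beta u_n - D^\beta u| > \lambda\}) \leq c \lambda^{-p} \|D^\beta u_n - D^\beta u\|_{H^{k-|\beta|,p}}^p$, I can pass, along a subsequence, to pointwise convergence $(k-|\beta|,p)$-q.e.\ of quasicontinuous representatives. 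Since $D^\beta u_n$ vanishes on an open neighborhood of $E$, the limit satisfies $D^\beta u|_E = 0$ $(k-|\beta|,p)$-q.e., which is exactly the defining condition of $\cW_E^{k,p}(\R^d)$.

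The reverse inclusion $\cW_E^{k,p}(\R^d) \subseteq W_E^{k,p}(\R^d)$ is the substantive direction and I would follow the Hedberg--Wolff strategy. The goal is, for $u \in \cW_E^{k,p}(\R^d)$ and $\eps > 0$, to construct $\phi \in C^\infty(\R^d)$ that vanishes on an open neighborhood of $E$ with $\|u - \phi u\|_{W^{k,p}(\R^d)} < \eps$; a standard mollification of $\phi u$ then yields an element of $C^\infty_E(\R^d)$ close to $u$. The cutoff $\phi$ would be built from a regularised distance to $E$ raised to a suitable power and modulated by a standard bump, so that $|D^\gamma \phi| \lesssim \dist_E^{-|\gamma|}$ near $E$ and $\phi \to 1$ pointwise off $E$ as the regularisation parameter goes to zero.

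The central analytic obstacle -- and where nonlinear potential theory becomes indispensable -- is to control in $L^p$ the Leibniz terms $D^\gamma \phi \cdot D^\beta u$ with $|\gamma| + |\beta| = k$. These are not obviously bounded; however, the quasieverywhere vanishing hypothesis $D^\beta u|_E = 0$ $(k-|\beta|,p)$-q.e.\ can be upgraded to a Hardy-type estimate of the form
\begin{align*}
\left\| \frac{D^\beta u}{\dist_E^{k-|\beta|}} \right\|_{L^p(U)} \leq c \, \|u\|_{W^{k,p}(\R^d)}
\end{align*}
on a suitable neighborhood $U$ of $E$. This capacitary Hardy inequality is precisely where Wolff's inequality, linking nonlinear potentials to Bessel capacities, intervenes. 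With it in hand the products $D^\gamma \phi \cdot D^\beta u$ become uniformly controlled in $L^p(U)$, and a dominated-convergence argument, letting the regularisation of the distance tend to zero, gives $\phi u \to u$ in $W^{k,p}(\R^d)$.

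The main obstacle I anticipate is exactly the Hardy inequality above, which is far from elementary: the hard part is not the approximation architecture but the potential-theoretic fact that q.e.\ vanishing of $D^\beta u$ on $E$ in $(k-|\beta|,p)$-capacity translates into genuine $L^p$-integrability of $D^\beta u / \dist_E^{k-|\beta|}$. Rather than reproving Wolff's inequality from scratch, I would import it as a black box from \cite{ad/he} and concentrate the proof on the cutoff construction and the Leibniz bookkeeping that reduces everything to it.
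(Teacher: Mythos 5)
The paper offers no proof of this statement: it is imported verbatim as the Hedberg--Wolff $(k,p)$-synthesis theorem, \cite[Thm.~9.1.3]{ad/he}, whose proof occupies a large part of Chapters~8 and~9 of that book. So the comparison has to be with the argument there. Your easy inclusion $W_E^{k,p}(\R^d)\subseteq \cW_E^{k,p}(\R^d)$ is correct and standard (weak-type capacitary estimate, q.e.\ convergent subsequence of quasicontinuous representatives).

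The hard inclusion, however, rests on a false lemma. The Hardy-type inequality you make the centrepiece, $\bigl\| D^\beta u/\dist_E^{\,k-|\beta|}\bigr\|_{L^p(U)}\le c\,\|u\|_{W^{k,p}(\R^d)}$ for all $u\in\cW_E^{k,p}(\R^d)$, fails for general closed $E$. Take $k=1$, $d=p=2$, $E=\{0\}$: then $C_{1,2}(E)=0$, so the quasieverywhere vanishing condition is vacuous and $\cW_E^{1,2}(\R^2)=W^{1,2}(\R^2)$, yet $\int_U |u|^2\,|\x|^{-2}\,\d\x=\infty$ for any $u$ equal to $1$ near the origin; the synthesis theorem is nevertheless true here because points are removable for $W^{1,2}(\R^2)$. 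More generally, the validity of such a Hardy inequality requires thickness or fatness hypotheses on $E$ --- that is precisely the subject of the present paper --- and no such hypothesis appears in the theorem, so q.e.\ vanishing cannot be ``upgraded'' to your estimate, by Wolff's inequality or otherwise. In the actual Hedberg--Wolff proof the role of Wolff's inequality is different: it yields the Kellogg property and capacitary strong-type estimates, which permit cutoffs adapted to level sets of nonlinear potentials rather than to $\dist_E$, so that the Leibniz terms $D^\gamma\phi\cdot D^\beta u$ are large only on sets of small $(k-|\beta|,p)$-capacity that can be excised. Your overall architecture (approximate $u$ by $\phi u$ and control the Leibniz terms) does match Hedberg's, but with the distance-weighted Hardy inequality as the analytic core the argument does not close.
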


Hedberg and Wolff's theorem manifests the use of capacities in the study of
traces of Sobolev functions. However, if one invests more on the
geometry of $E$, e.g.\ if one assumes that it is an $l$-set, then by the
subsequent recent result of Brewster, Mitrea, Mitrea and Mitrea capacities can
be replaced by the $l$-dimensional Hausdorff measure at each occurrence.

\begin{theorem}[{\cite[Thm.\ 4.4, Cor.\ 4.5]{mitrea}}]
\label{t-coincid-mitrea}
Let $k \in \N$, $p \in {]1,\infty[}$ and let $E \subset \R^d$ be closed and
additionally an $l$-set for some $l \in {]d-p,d]}$. Then
\begin{align*}
 W_E^{k,p}(\R^d) = \cW_E^{k,p}(\R^d)= \big\{u \in W^{k,p}(\R^d): &\text{
 $D^\beta u|_E = 0$ holds $\Hd$-a.e.\ on $E$} \\ &\text{ for all multiindices
 $\beta$, $0\leq |\beta| \leq k-1$} \big\},
\end{align*}
where on the right-hand side $D^\beta u|_E = 0$ means, as before, that for
$\Hd$-almost every $\y \in E$ the average integrals
$\frac{1}{|B(\y,r)|}\int_{B(\y,r)} D^\beta u(\x)  \; \d \x $ vanish in the limit
$r \to 0$.
\end{theorem}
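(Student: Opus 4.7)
The plan is to combine the Hedberg--Wolff $(k,p)$-synthesis (Theorem~\ref{t-coincid}) with a bidirectional comparison between capacitary and Hausdorff-measure notions of smallness on $l$-sets. The first identity $W_E^{k,p}(\R^d) = \cW_E^{k,p}(\R^d)$ is precisely Theorem~\ref{t-coincid} and requires only that $E$ be closed, so what remains is to show that this common space coincides with the set $\mathcal V$ defined on the right-hand side by the $\H_l$-a.e.\ vanishing of all derivatives up to order $k-1$.

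For the easy inclusion $\cW_E^{k,p}(\R^d) \subseteq \mathcal V$, I would fix a multi-index $\beta$ with $|\beta|\le k-1$ and set $\alpha := k-|\beta|\ge 1$. Because $l>d-p\ge d-\alpha p$, Theorem~\ref{t-comparison} applies, so every $C_{\alpha,p}$-nullset on $E$ is automatically an $\H_l$-nullset. By Lemma~\ref{l-lsets with hausdorff content} the $l$-set $E$ is $l$-thick, and the Lebesgue-point assertions \eqref{e-pointwi}--\eqref{e-Lebesgue} applied to $D^\beta u \in W^{\alpha,p}(\R^d)$, together with the same comparison, then show that $\H_l$-a.e.\ point of $E$ is a Lebesgue point of $D^\beta u$. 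Hence the $(\alpha,p)$-q.e.\ vanishing built into $\cW_E^{k,p}$ passes verbatim to the $\H_l$-a.e.\ vanishing of the average-integral limit, which is exactly the condition defining $\mathcal V$.

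For the converse $\mathcal V \subseteq \cW_E^{k,p}(\R^d)$, the elementary comparison between capacities and Hausdorff measures is no longer enough, because $\H_l(F)=0$ does not force $C_{\alpha,p}(F)=0$ even for $F\subset E$. What I would do instead is appeal to Jonsson--Wallin trace and extension theory on the Ahlfors-regular set $E$, whose availability is guaranteed by Lemma~\ref{l-lsets with hausdorff content}. Through that machinery one obtains a bounded trace operator from $W^{\alpha,p}(\R^d)$ into the natural Besov space on $(E,\H_l)$, together with a continuous right inverse, and the assumption $\widetilde{D^\beta u}|_E=0$ $\H_l$-a.e.\ expresses exactly that $D^\beta u$ lies in its kernel. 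A Whitney decomposition of $\R^d\setminus E$ combined with mollification on each Whitney cube would then permit one to approximate any element of this kernel by $C^\infty$-functions supported off $E$, so that the kernel is identified with $W_E^{\alpha,p}(\R^d)=\cW_E^{\alpha,p}(\R^d)$. Applied to $D^\beta u$ this forces $\widetilde{D^\beta u}|_E=0$ $(\alpha,p)$-q.e., which completes the argument.

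The main obstacle is this converse step: one cannot bypass the Jonsson--Wallin trace theory and the accompanying Whitney extension, which together exploit the genuine $l$-set structure of $E$ in a way that capacitary comparison alone cannot detect. Everything else is bookkeeping around Theorems~\ref{t-coincid} and~\ref{t-comparison}.
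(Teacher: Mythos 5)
First, a point of orientation: the paper does not prove Theorem~\ref{t-coincid-mitrea} at all --- it is quoted from Brewster--Mitrea--Mitrea--Mitrea \cite{mitrea} --- so there is no internal argument to measure yours against; it must stand on its own. (You also silently read the measure on the right-hand side as $\mathcal{H}_l$ rather than the printed $\mathcal{H}_{d-1}$; since for $l<d-1$ the printed condition would be vacuous on an $l$-set, your reading is surely the intended one.) The parts of your proposal that work are the first equality, which is indeed just Theorem~\ref{t-coincid}, and the inclusion $\cW_E^{k,p}(\R^d)\subseteq\mathcal{V}$: with $\alpha=k-|\beta|\ge 1$ one has $d-l<p\le\alpha p$, so Theorem~\ref{t-comparison} converts each $(\alpha,p)$-exceptional set into an $\H_l$-nullset, exactly as you say.

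The converse inclusion is where the theorem actually lives, and your sketch does not close it. Two concrete problems. First, the identification ``kernel of the Jonsson--Wallin trace on $W^{\alpha,p}(\R^d)$ equals $W_E^{\alpha,p}(\R^d)$'' is false for $\alpha\ge 2$: the kernel of the zeroth-order trace contains functions whose first-order derivatives have nonvanishing trace on $E$ (take $E$ a piece of hyperplane and $v$ vanishing on $E$ with nonzero normal derivative), whereas membership in $W_E^{\alpha,p}(\R^d)$ forces all derivatives up to order $\alpha-1$ to vanish. What your argument actually needs is only the weaker ``upgrade'' statement: for $v\in W^{\alpha,p}(\R^d)$ with $v|_E=0$ holding $\H_l$-a.e., one has $v|_E=0$ $(\alpha,p)$-q.e. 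Second, and more seriously, even that weaker statement is exactly the hard analytic content of the theorem, and the step you dispose of with ``a Whitney decomposition combined with mollification would then permit one to approximate any element of this kernel by $C^\infty$-functions supported off $E$'' is where the whole difficulty sits. The natural truncation $v\mapsto v\,\bigl(1-\phi_\eps(\dist_E)\bigr)$ produces an error term comparable to $\|v/\dist_E\|_{L^p(\{\dist_E\lesssim\eps\})}$, so one needs a Hardy-type (or capacitary strong-type) inequality for functions whose trace vanishes merely $\H_l$-a.e.\ on the $l$-set $E$; this is precisely where the hypothesis $l\in{]d-p,d]}$ enters beyond the comparison theorem, and it is the part of \cite{mitrea} that required genuine new work beyond Jonsson--Wallin, whose kernel characterization (cf.\ Remark~\ref{r-trace}) is only available under additional geometric hypotheses on $\R^d\setminus E$. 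As written, your argument assumes the conclusion at this point.
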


%
%
%
\section{Proof of Theorem~\ref{t-hardy}} \label{sec-proof} We will deduce
Theorem \ref{t-hardy} from the following proposition that states
the assertion in the case $D = \partial \Omega$.

\begin{proposition}[\cite{juha}, see also \cite{korte}] \label{p-juha}
 Let $\Omega_\bullet \subseteq \R^d$ be a bounded domain and let $p \in
{]1,\infty[}$. If $\partial \Omega_\bullet$ is $l$-thick for some $l \in
{]d-p,d]}$, then Hardy's inequality is satisfied for all $u
\in W^{1,p}_0(\Omega_\bullet)$, i.e.\@ \eqref{e-hardy0} holds with $\Omega$
replaced by $\Omega_\bullet$ and $D$ by $\partial \Omega_\bullet$.
\end{proposition}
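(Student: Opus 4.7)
The plan is the now-standard two-stage scheme for Hardy inequalities on domains with Hausdorff-content-fat complement: reduce the $L^p$ inequality to a \emph{pointwise} Hardy inequality, and then integrate with the Hardy--Littlewood maximal theorem. I would start by extending $u \in W^{1,p}_0(\Omega_\bullet)$ by zero to all of $\R^d$; the extension lies in $W^{1,p}(\R^d)$ and, by Hedberg--Wolff (Theorem~\ref{t-coincid}), it vanishes $(1,p)$-quasieverywhere on $\partial \Omega_\bullet$.

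Next, fix an auxiliary exponent $q \in {]1,p[}$ with $d - q < l$, which exists because $l > d - p$ by hypothesis. I would aim for the pointwise estimate
\begin{align*}
 |u(\x)| \leq c \, \dist_{\partial \Omega_\bullet}(\x) \, \bigl(M(|\nabla u|^q)(\x)\bigr)^{1/q} \quad\text{for a.e.\ } \x \in \Omega_\bullet,
\end{align*}
with $M$ the Hardy--Littlewood maximal operator. Given such an $\x$, set $r := \dist_{\partial\Omega_\bullet}(\x)$, pick a nearest boundary point $z \in \partial \Omega_\bullet$, and form a dyadic chain of balls $B_k := B(\x_k, 2^{-k} r)$ accumulating at $z$. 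Writing $u(\x)$ as a telescoping sum of mean values of $u$ over the $B_k$, each successive difference is bounded by $c\, 2^{-k} r\, (M(|\nabla u|^q)(\x))^{1/q}$ via a local Poincar\'e inequality. The pivotal step is the vanishing of the tail averages: on the small balls near $z$, the $l$-thickness of $\partial\Omega_\bullet$ combined with $l > d-q$ provides a Sobolev--Poincar\'e-type inequality exploiting that $u$ vanishes on a set of positive Hausdorff $l$-content (namely the quasi-everywhere vanishing established above), forcing the mean values to tend to zero.

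Raising the pointwise estimate to the $p$-th power and integrating, the Hardy--Littlewood maximal theorem on $L^{p/q}(\R^d)$ (which applies because $p/q > 1$) then yields
\begin{align*}
 \int_{\Omega_\bullet} \left| \frac{u}{\dist_{\partial \Omega_\bullet}} \right|^p \dd \x \leq c \int_{\R^d} |\nabla u|^p \dd \x = c \int_{\Omega_\bullet} |\nabla u|^p \dd \x,
\end{align*}
which is the claim. The main obstacle is the pointwise estimate, where one must translate the purely measure-theoretic $l$-content lower bound on $\partial\Omega_\bullet$ into an effective capacity-type lower bound on the small balls in the chain. The hypothesis $l > d - p$ enters decisively here: via the Hausdorff-content-to-capacity comparison (cf.\ Theorem~\ref{t-comparison}) the $l$-content of $\partial\Omega_\bullet$ controls the relevant $(1,q)$-capacity, which in turn drives the Sobolev--Poincar\'e estimate needed to terminate the chain at $z$.
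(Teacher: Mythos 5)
Your outline is essentially correct and coincides with the argument behind the cited sources: the paper itself offers no proof of Proposition~\ref{p-juha} but quotes it from \cite{juha} and \cite{korte}, where exactly this scheme --- zero extension, a pointwise $q$-Hardy inequality for some $q<p$ obtained by chaining Poincar\'e estimates toward a nearest boundary point and terminating the chain via a Maz'ya-type Sobolev--Poincar\'e inequality driven by the $l$-thickness, followed by the Hardy--Littlewood maximal theorem on $L^{p/q}$ --- is carried out. The only point to flag is that the content-to-capacity step needs the \emph{quantitative} comparison (a lower bound $\gamma r^l$ on $\H^\infty_l$ implies a proportional lower bound on the relative variational $q$-capacity when $l>d-q$), which is standard but strictly stronger than the null-set statement of Theorem~\ref{t-comparison} that you cite.
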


Below we will reduce to the case $D = \partial \Omega$ by purely topological
means, so that we can apply Proposition~\ref{p-juha} afterwards. We will
repeatedly use the following topological fact.
\begin{align} \label{blacksquare}
\tag{$\blacksquare$} 
\begin{minipage}{0.92\textwidth}{Let $\{M_\lambda\}_{\lambda }$
be a family of connected subsets of a topological space. If $\bigcap_{\lambda }
M_\lambda \neq \emptyset$, then $\bigcup_{\lambda } M_\lambda$ is again
connected.} 
\end{minipage}
\end{align}
As required in Theorem~\ref{t-hardy} let now $\Omega \subseteq \R^d$ be a
bounded domain and let $D$ be a closed part of $\partial \Omega$. Then choose an
open ball $B \supseteq \overline \Omega$ that, in what follows, will be
considered as the relevant topological space. Consider
\begin{align*}
 \mathcal C := \{ M \subset B \setminus D : M \text{ open, connected
	and } \Omega \subset M \}
\end{align*}
and for the rest of the proof put
\begin{align*}
 \Omega_\bullet :=
	\bigcup_{M \in \mathcal C} M.
\end{align*}
In the subsequent lemma we collect some properties of $\Omega_\bullet$.
Our proof here is not the shortest possible, cf.\ \cite[Lem.\
6.4]{auschhallreh} but it has, however, the advantage to give a description
of $\Omega_\bullet$ as the union of $\Omega$, the boundary part $\partial
\Omega \setminus D$ and those connected components of $B \setminus \overline
\Omega$ whose boundary does not consist only of points from $D$. This completely
reflects the naive geometric intuition.

\begin{lemma} \label{l-1}
It holds $\Omega \subseteq \Omega_\bullet \subseteq B$. Moreover,
$\Omega_\bullet$ is open and connected and $\partial \Omega_\bullet = D$ in $B$.
\end{lemma}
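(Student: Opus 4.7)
The plan is to verify the four assertions in the order they appear, leveraging the topological fact \eqref{blacksquare} twice: once for connectedness of $\Omega_\bullet$ and once for the nontrivial boundary inclusion.

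First I would dispense with the inclusions. The inclusion $\Omega_\bullet \subseteq B$ is immediate since each $M \in \mathcal C$ lies in $B$. For $\Omega \subseteq \Omega_\bullet$ I would simply observe that $\Omega$ itself is a member of $\mathcal C$: it is open and connected (as a domain), it contains itself, and it is disjoint from $D$ because $D \subseteq \partial \Omega$ and $\Omega$ is open. Openness of $\Omega_\bullet$ is then automatic as a union of open sets.

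For connectedness I would apply \eqref{blacksquare} to the family $\{M\}_{M \in \mathcal C}$: each $M$ is connected by definition, and $\bigcap_{M \in \mathcal C} M \supseteq \Omega \neq \emptyset$, so the union $\Omega_\bullet$ is connected.

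The main step is the boundary identity $\partial \Omega_\bullet = D$ in $B$, and I would prove the two inclusions separately. The inclusion $D \subseteq \partial \Omega_\bullet$ is easy: since $\Omega_\bullet \subseteq B \setminus D$, no point of $D$ lies in $\Omega_\bullet$; on the other hand every $\x \in D \subseteq \partial \Omega$ is a limit of points in $\Omega \subseteq \Omega_\bullet$, so $\x \in \overline{\Omega_\bullet} \setminus \Omega_\bullet = \partial \Omega_\bullet$ (using that $\Omega_\bullet$ is open in $B$). This is the step I expect to be the main obstacle and it is the reverse inclusion $\partial \Omega_\bullet \subseteq D$. Here I would argue by contradiction: suppose $\x \in \partial \Omega_\bullet$ with $\x \notin D$. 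Since $D$ is closed in $B$, there is an open ball $U \subseteq B \setminus D$ around $\x$, which is connected. Because $\x \in \overline{\Omega_\bullet}$ we have $U \cap \Omega_\bullet \neq \emptyset$, so by \eqref{blacksquare} applied to the two connected sets $U$ and $\Omega_\bullet$ the union $U \cup \Omega_\bullet$ is connected; it is open, contains $\Omega$, and lies in $B \setminus D$, hence belongs to $\mathcal C$. But then $U \cup \Omega_\bullet \subseteq \Omega_\bullet$ by the very definition of $\Omega_\bullet$, forcing $\x \in \Omega_\bullet$, which contradicts $\x \in \partial \Omega_\bullet$ since $\Omega_\bullet$ is open. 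This completes the proof.
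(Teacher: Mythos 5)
Your proof is correct, and in the hard direction $\partial\Omega_\bullet \subseteq D$ it takes a genuinely different and shorter route than the paper. The paper handles that inclusion by a three-step case analysis: it first shows directly that points of $\partial\Omega\setminus D$ lie in $\Omega_\bullet$, then decomposes $B\setminus\overline\Omega$ into its connected components $Z_j$, proves $\partial Z_j \subseteq \partial\Omega$, and shows that each $Z_j$ is either entirely absorbed into $\Omega_\bullet$ (when $\partial Z_j$ meets $\partial\Omega\setminus D$) or entirely separated from $\overline{\Omega_\bullet}$ (when $\partial Z_j\subseteq D$). You instead exploit the maximality of $\Omega_\bullet$ in $\mathcal C$ uniformly: any boundary point $\x\notin D$ admits a ball $U\subseteq B\setminus D$ meeting $\Omega_\bullet$, so $U\cup\Omega_\bullet\in\mathcal C$ and hence $\x\in\Omega_\bullet$, a contradiction. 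This is clean and complete — the authors themselves remark that their proof "is not the shortest possible." What their longer argument buys, and yours does not, is the explicit description of $\Omega_\bullet$ as $\Omega$ together with $\partial\Omega\setminus D$ and exactly those components of $B\setminus\overline\Omega$ whose boundary is not contained in $D$; Steps 2 and 3 of their proof are then reused verbatim in Corollary~\ref{c-002HE} to decide whether $\partial\Omega_\bullet$ equals $D$ or $D\cup\partial B$ when $\Omega_\bullet$ is viewed as a subset of $\R^d$. So if one adopts your proof of the lemma, that corollary would need a supplementary argument.
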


\begin{proof}
 The first assertion is obvious. By construction $\Omega_\bullet$ is open.
 Since all elements from $\mathcal C$ contain $\Omega$ the connectedness of
 $\Omega_\bullet$ follows by \eqref{blacksquare}. It
 remains to show $\partial \Omega_\bullet = D$.

 Let $\x \in D$. Then $\x$ is an accumulation point of $\Omega$
 and, since $\Omega \subseteq \Omega_\bullet$, also of $\Omega_\bullet$. On the
 other hand, $\x \not \in \Omega_\bullet$ by construction. This implies
 $\x \in \partial \Omega_\bullet$ and so $D \subseteq \partial
 \Omega_\bullet$.

 In order to show the inverse inclusion, we first show that points from
 $\partial \Omega \setminus D$ cannot belong to $\partial \Omega_\bullet$.
 Indeed, since $D$ is closed, for $\x \in \partial \Omega \setminus D$
 there is a ball $B_\x \subseteq B$ around $\x$ that does not
 intersect $D$. Since $\x$ is a boundary point of $\Omega$, we have
 $B_\x \cap \Omega \neq \emptyset$. Both $\Omega$ and $B_\x$ are
 connected, so \eqref{blacksquare} yields that $\Omega \cup B_\x$ is
 connected. Moreover, this set is open, contains $\Omega$ and avoids $D$, so it
 belongs to $\mathcal C$ and we obtain $\Omega \cup B_\x \subseteq
 \Omega_\bullet$. This in particular yields $\x \in \Omega_\bullet$,
 so $\x \notin \partial \Omega_\bullet$ since $\Omega_\bullet$ is open.

 Summing up, we already know that $\x \in \overline \Omega$
 belongs to $\partial \Omega_\bullet$ if and only if $\x \in D$. So, it
 remains to make sure that no point from $B \setminus \overline \Omega$ belongs
 to $\partial \Omega_\bullet$.

 As $B \setminus \overline{\Omega}$ is open, it splits up into its open
 connected components $Z_0, Z_1, Z_2, \ldots$. There are possibly only finitely
 many such components but at least one. We will show in a first step that for
 all these components it holds $\partial Z_j \subseteq \partial \Omega$. This
 allows to distinguish the two cases $\partial Z_j \subseteq D$ and $\partial
 Z_j \cap (\partial \Omega \setminus D) \neq \emptyset$. In Steps 2 and 3 we
 will then complete the proof by showing that in both cases $Z_j$ does not
intersect $\partial \Omega_\bullet$.

\subsection*{Step 1: $\partial Z_j \subseteq \partial \Omega$ for all
$j$.}
 First note that $\partial Z_j \cap \Omega = \emptyset$
 for all $j$. Indeed, assuming this set to be non-empty and investing that
 $\Omega$ is open, we find that the set $Z_j \cap \Omega$ cannot be empty
 either and this contradicts the definition of $Z_j$.
 
 Now, to prove the claim of Step 1, assume by contradiction that, for some $j$,
 there is a point $\x \in \partial Z_j$ that does not belong to
 $\partial \Omega$. By the observation above we then have $\x \notin
 \overline \Omega$ and consequently there is a ball $B_\x$ around
 $\x$ that does not intersect $\overline \Omega$. Now, the set
 $B_\x \cup Z_j$ is connected thanks to \eqref{blacksquare}, avoids
 $\overline \Omega$ and includes $Z_j$ properly. However, this contradicts the
 property of $Z_j$ to be a connected component of $B \setminus \overline
 \Omega$.

\subsection*{Step 2: If $\partial Z_j \subseteq D$, then
 $\overline{\Omega}_\bullet \cap Z_j = \emptyset$.}
 We first note that it suffices to show $\Omega_\bullet \cap Z_j = \emptyset$.
 In fact, due to $\overline \Omega_\bullet = \partial \Omega_\bullet \cup
 \Omega_\bullet$ we then get $\overline{\Omega}_\bullet \cap Z_j =
 \emptyset$ since $Z_j$ is open.

 So, let us assume there is some $\x \in \Omega_\bullet \cap Z_j$. Then
 $\Omega_\bullet \cup Z_j$ is connected due to \eqref{blacksquare}. By
 assumption we have $\partial Z_j \subseteq D$ and by construction the sets
 $Z_j$ and $\Omega_\bullet$ are both disjoint to $D$. So we can infer that
 $\partial Z_j \cap (\Omega_\bullet \cup Z_j) = \emptyset$ and this allows us
 to write
 \[ \Omega_\bullet \cup Z_j = \bigl( \Omega_\bullet \cup Z_j \bigr) \cap \bigl(
	Z_j \cup ( B \setminus \overline Z_j ) \bigr) = Z_j \cup
	\bigl( \Omega_\bullet \cap ( B \setminus \overline Z_j ) \bigr).
 \]
 This is a decomposition of $\Omega_\bullet \cup Z_j$ into two open and
 mutually disjoint sets, so if we can show that both are nonempty then this
 yields a contradiction to the connectedness of $\Omega_\bullet \cup Z_j$ and
 the claim of Step 2 follows. Indeed, we even find
 \[ \Omega_\bullet \cap ( B \setminus \overline Z_j ) = \Omega_\bullet
	\setminus \overline Z_j = \Omega_\bullet \setminus (\partial Z_j \cup
	Z_j) \supset \Omega \setminus (D \cup Z_j) = \Omega \neq \emptyset,
 \]
 since both $D$ and $Z_j$ do not intersect $\Omega$.

\subsection*{Step 3: If $\partial Z_j \cap (\partial \Omega \setminus D) \neq
 \emptyset$, then $Z_j \subseteq \Omega_\bullet$.}
 Let $\x \in \partial Z_j \cap (\partial \Omega \setminus D)$, and let
 $B_\x$ be a ball around $\x$ that does not intersect $D$. The
 point $\x$ is a boundary point of $Z_j$, so $B_\x \cap Z_j \neq
 \emptyset$ and we obtain that $B_\x \cup Z_j$ is connected by
 \eqref{blacksquare}.
 By the same argument, also the set $B_\x \cup
 \Omega$ is connected and putting these two together a third reiteration of the
 argument yields that $(B_\x \cup \Omega) \cup (B_\x \cup Z_j) =
 \Omega \cup B_\x \cup Z_j$ is again connected.  This last set is
 open and does not intersect $D$, so it belongs to $\mathcal C$ and we end up
 with $\Omega \cup B_\x \cup Z_j \subseteq \Omega_\bullet$. In
 particular we have $Z_j \subseteq \Omega_\bullet$.
\end{proof}

\begin{remark} \label{r-unique}
 Conversely, it can be shown that the asserted properties characterize
 $\Omega_\bullet$ uniquely in the sense that if an open, connected subset $\Xi
 \supset \Omega$ of $B$ additionally satisfies $\partial \Xi = D$, then
 necessarily $\Xi = \Omega_\bullet$. In fact, since $\Xi \cap D = \emptyset$ one
 has $\Xi \subset \Omega_\bullet$, due to the definition of $\Omega_\bullet$. In
 order to obtain the inverse inclusion we write
 \begin{equation} \label{e-equa}
  \Omega_\bullet = \bigl( \Omega_\bullet \cap \Xi \bigr) \cup \bigl(
	\Omega_\bullet \cap \partial \Xi \bigr) \cup \bigl( \Omega_\bullet \cap
	(B \setminus \overline \Xi) \bigr) = \Xi \cup \bigl( \Omega_\bullet
	\cap (B \setminus \overline \Xi) \bigr),
 \end{equation}
 since $\Omega_\bullet \cap \partial \Xi = \Omega_\bullet \cap D = \emptyset$.
 Both $\Xi = \Xi \cap \Omega_\bullet$ and $\Omega_\bullet \cap (B \setminus
 \overline \Xi)$ are open in $\Omega_\bullet$, and $\Xi \supset \Omega$ is
 non-empty. Since $\Omega_\bullet$ is connected and $\Xi = \Xi \cap
 \Omega_\bullet$ is clearly disjoint to $\Omega_\bullet \cap (B \setminus
 \overline \Xi)$, this latter set must be empty. Thus, \eqref{e-equa} gives
 $\Xi =\Omega_\bullet$.
\end{remark}

\begin{corollary} \label{c-002HE}
 Consider $\Omega_\bullet$ as a subset of $\R^d$. Then $\Omega_\bullet$
 is open and connected. Moreover, either $\partial \Omega_\bullet = D$ or
 $\partial \Omega_\bullet = D \cup \partial B$.
\end{corollary}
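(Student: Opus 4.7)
The strategy is to leverage Lemma~\ref{l-1} and add only what is needed to translate the conclusions from the ambient space $B$ to $\R^d$. First, openness and connectedness of $\Omega_\bullet$ as a subset of $\R^d$ require no new argument: openness transfers because $B$ is open in $\R^d$, and connectedness is an intrinsic property that does not depend on the ambient space.

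For the boundary I would use that, since $\Omega_\bullet \subseteq B$, every point of $\overline{\Omega_\bullet}^{\R^d} \setminus B$ must lie on $\partial B$. Combined with Lemma~\ref{l-1}, which gives $\overline{\Omega_\bullet}^{B} = \Omega_\bullet \cup D$, and the observation that $D \subseteq \overline\Omega \subseteq B$ is disjoint from $\partial B$, this yields the decomposition
\[
\partial \Omega_\bullet = D \cup S, \qquad S := \overline{\Omega_\bullet}^{\R^d} \cap \partial B,
\]
where the boundary is now computed in $\R^d$. It thus remains to show that $S$ is either empty or all of $\partial B$.

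Since $S$ is automatically closed in $\partial B$ and $\partial B$ is connected, it suffices to prove that $S$ is also open in $\partial B$. Given $\x \in S$, I would pick an open ball $V$ around $\x$ with $V \cap \overline\Omega = \emptyset$, which is possible because $\overline\Omega \subseteq B$ and $\x \in \partial B$. The set $V \cap B$ is convex, hence connected, and lies in $B \setminus \overline\Omega$, so it is contained in a single connected component $Z_j$ of $B \setminus \overline\Omega$. From $\x \in \overline{\Omega_\bullet}^{\R^d}$ we have $V \cap \Omega_\bullet \neq \emptyset$, which forces $Z_j \cap \Omega_\bullet \neq \emptyset$. The Case~A/Case~B alternative established in Steps~2 and~3 of the proof of Lemma~\ref{l-1} then forces $Z_j \subseteq \Omega_\bullet$, so $V \cap B \subseteq \Omega_\bullet$. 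Since $V \cap \partial B \subseteq \overline{V \cap B}^{\R^d}$, the entire relative neighborhood $V \cap \partial B$ of $\x$ in $\partial B$ sits inside $S$, which establishes openness.

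The only real obstacle I anticipate is careful bookkeeping with the two ambient topologies, $B$ versus $\R^d$, when taking closures and boundaries, together with invoking the Case~A/Case~B dichotomy for the components $Z_j$ in precisely the formulation provided by the proof of Lemma~\ref{l-1}. Everything else reduces to elementary point-set topology.
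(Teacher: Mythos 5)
Your proof is correct, and it reaches the boundary dichotomy by a genuinely different mechanism than the paper. The paper's argument is global: it picks a single annulus $A \subseteq B$ adjacent to $\partial B$ and disjoint from $\overline\Omega$; since $A$ is connected it lies in one component $Z_j$ of $B \setminus \overline\Omega$, and a single application of the Step~2/Step~3 dichotomy then decides at once whether all of $\partial B$ is absorbed into $\Omega_\bullet$ (giving $\partial\Omega_\bullet = D$) or none of it is (giving $\partial\Omega_\bullet = D \cup \partial B$). You instead argue locally: you isolate $S = \overline{\Omega_\bullet} \cap \partial B$, show via Lemma~\ref{l-1} that $\partial\Omega_\bullet = D \cup S$, and prove $S$ is clopen in $\partial B$ by placing around each point of $S$ a small ball $V$ with $V \cap \overline\Omega = \emptyset$, noting that the convex set $V \cap B$ lies in a single component $Z_j$ meeting $\Omega_\bullet$, and invoking the same Step~2/Step~3 dichotomy to get $Z_j \subseteq \Omega_\bullet$. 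The connectedness of $\partial B$ then yields $S \in \{\emptyset, \partial B\}$. Both proofs ultimately rest on the identical dichotomy for the components $Z_j$, so neither buys more generality; the paper's version is slightly shorter because one annulus does the work of your clopen argument, while yours is more robust in that it never needs to exhibit a single connected set hugging all of $\partial B$. Note that both arguments implicitly require $d \ge 2$ (your step uses connectedness of $\partial B$, the paper's uses connectedness of the annulus), so you are on exactly the same footing as the paper there. All the small verifications you flag (closure bookkeeping in $B$ versus $\R^d$, non-emptiness of $V \cap B$, and $V \cap \partial B \subseteq \overline{V \cap B}$) do go through.
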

\begin{proof}
 It is clear that $\Omega_\bullet$ remains open. Assume that $\Omega_\bullet$
 is not connected. Then there are disjoint open sets $U,V \subseteq \R^d$
 such that $\Omega_\bullet = U \cup V$. However, the property $\Omega_\bullet
 \subseteq B$ then gives $\Omega_\bullet = \Omega_\bullet \cap B = (U \cap B)
 \cup (V \cap B)$, where $U \cap B$ and $V \cap B$ are open in $B$ and disjoint
 to each other. This contradicts Lemma~\ref{l-1}.

 For the last assertion consider an annulus $A \subseteq B$ that is
 adjacent to $\partial B$ and does not intersect $\overline \Omega$. Let $Z_j$
 be the connected component of $B \setminus \overline{\Omega}$ that contains
 $A$. We distinguish again the two cases of Step 2 and Step 3 in the proof of
 Lemma~\ref{l-1}: If $\partial Z_j \subseteq D$, we have shown in Step 2
 that $Z_j$ is disjoint to $\Omega_\bullet$ and this implies $\partial
 \Omega_\bullet = \partial \Omega_\bullet \cap B = D$. In the second case, we
 infer from Step 3 in the above proof that $A \subseteq Z_j \subseteq
 \Omega_\bullet$ and this implies $\partial \Omega_\bullet = D \cup \partial B$.
 \end{proof}

Let us now conclude the proof of Theorem~\ref{t-hardy}. We first observe that in
both cases appearing in Corollary~\ref{c-002HE} the set $\partial
\Omega_\bullet$ is $m$-thick for some $m \in {]d-p,d-1]}$. In fact, $D$ is
$l$-thick for some $l \in {]d-p,d]}$ by assumption and using its local
representation as the graph of a Lipschitz function, it can easily be checked
that $\partial B$ is a $(d-1)$-set, hence $(d-1)$-thick owing to
Lemma~\ref{l-lsets with hausdorff content}. The claim follows from
Lemma~\ref{l-thickness for smaller values}. Altogether, Proposition~\ref{p-juha}
applies to our special choice of $\Omega_\bullet$. 

Now, let $\mathfrak{E}$ be the extension operator provided by
Assumption~\ref{t-hardy:iii} of Theorem~\ref{t-hardy}. In view of
Corollary~\ref{c-002HE} we can define an extension operator $\mathfrak E_\bullet
: W^{1,p}_D(\Omega) \to W^{1,p}_0(\Omega_\bullet)$ as follows: If $\partial
\Omega_\bullet =D$, then we put $\mathfrak E_\bullet v := \mathfrak E
v|_{\Omega_\bullet}$ and if $\partial \Omega_\bullet = D \cup \partial B$, then
we choose $\eta \in C_0^\infty(B)$ with the property $\eta \equiv 1$ on
$\overline \Omega$ and put $\mathfrak E_\bullet v := ( \eta \mathfrak E v
)|_{\Omega_\bullet}$. This allows us to apply Proposition~\ref{p-juha} to the
functions $\mathfrak E_\bullet u \in W^{1,p}_0(\Omega_\bullet)$, where $u$ is
taken from $W^{1,p}_D(\Omega)$. With a final help of Assumption~\ref{t-hardy:ii}
in Theorem~\ref{t-hardy} this gives
\begin{align*}
  \int_\Omega \left| \frac{u}{\d_D} \right|^p \; \dd \mathrm{x} &\le
	\int_\Omega \left| \frac{u}{\d_{\partial \Omega_\bullet}} \right|^p
	\; \dd \mathrm{x} \le \int_{\Omega_\bullet} \left|
	\frac{\mathfrak E_\bullet u}{\d_{\partial \Omega_\bullet}} \right|^p
	\; \dd \mathrm{x} \le  c \int_{\Omega_\bullet} |
	\nabla (\mathfrak E_\bullet u)|^p \; \dd \mathrm{x} \nonumber \\
 &\le c \|\mathfrak E_\bullet u\|^p_{W^{1,p}_0(\Omega_\bullet)}
  \le c \|u\|^p_{W^{1,p}_D(\Omega)} \le c \int_{\Omega} |\nabla u|^p
	\; \dd \mathrm{x}
\end{align*}
for all $u \in W^{1,p}_D(\Omega)$ and the proof is complete.

\begin{remark} \label{r-pointiii}
 \begin{enumerate} 
  \item At the first glance one might think that $\Omega_\bullet$ could always
	be taken as $B \setminus D$. The point is that this set need not be
	connected, as the following example shows. Take $\Omega = \{ \x
	: 1 < |\x| < 2 \}$ and $D = \{ \x : |\x| = 1 \}
	\cup \{ \x : |\x| = 2, \x_1 \ge 0 \}$. Obviously, if a
	ball $B$ contains $\overline \Omega$, then $B \setminus D$ cannot be
        connected. In the spirit of Lemma \ref{l-1}, the set $\Omega_\bullet$
        has here to be taken as $B \setminus (D \cup \{\x : |\x|
        < 1 \})$. Thus, the somewhat subtle, topological considerations above
	cannot be avoided in general.
  \item One might suggest that the procedure of this work is not limited to
	the proof of Hardy's inequality in the non-Dirichlet case. Possibly the
	 combination of an application of the extension operator $\mathfrak E
	/\mathfrak E_\bullet$ and the construction of $\Omega_\bullet$ may serve
	for the reduction of other problems on function spaces related to mixed
	boundary conditions to the pure Dirichlet case.
\end{enumerate}
\end{remark}

Finally, instead of its $l$-thickness we can also require that $D$ is an
$l$-set -- a condition that promises to be more common to
applications. One access to such a result is to
prove that the $l$-property of $\partial \Omega$ implies the $p$-fatness of
$\R^d \setminus \Omega$ -- a result which was first obtained by Maz'ya
\cite{mazcomm}. Knowing this, Hardy's inequality may then be deduced from the
results in \cite{lewis} or \cite{Wannebo}. Our approach is quite different and
simply rests on Proposition~\ref{p-juha} and Lemma~\ref{l-lsets with hausdorff
content}. So we can record the following.

\begin{corollary} \label{c-dminus1}
The assertion of Theorem \ref{t-hardy} remains valid if instead of
its $l$-thickness we require that $D$ is an $l$-set.
\end{corollary}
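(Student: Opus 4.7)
The plan is to deduce this corollary directly from Theorem~\ref{t-hardy} by showing that the hypothesis ``$D$ is an $l$-set'' implies ``$D$ is $l$-thick'' with the same value of $l$. The bridge between these two notions is already provided by Lemma~\ref{l-lsets with hausdorff content}, so the real work consists merely in verifying that its hypotheses are met in the present setting.

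First I would observe that $D$ is compact. Indeed, by assumption $D$ is a closed subset of $\partial \Omega$, and since $\Omega \subset \R^d$ is bounded, $\partial \Omega \subset \overline{\Omega}$ is compact; hence so is the closed subset $D$. This puts us in the setting of Lemma~\ref{l-lsets with hausdorff content}, which applies to any compact $l$-set with $l \in {]0,\infty[}$ and yields the two-sided estimate
\begin{align*}
 c_0 r^l \le \H_l^\infty(D \cap B(\x,r)) \le c_1 r^l, \qquad \x \in D, \, r \in {]0,1[}.
\end{align*}
The lower bound is exactly the defining inequality \eqref{e-lthick} of $l$-thickness (with $R=1$ and $\gamma = c_0$), so $D$ is $l$-thick for the same exponent $l \in {]d-p,d]}$ prescribed by the corollary's assumption.

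Having established the $l$-thickness of $D$, all three hypotheses of Theorem~\ref{t-hardy} are now satisfied: condition~\ref{t-hardy:i} follows from what we have just shown, while conditions~\ref{t-hardy:ii} and~\ref{t-hardy:iii}, which concern only the Poincar\'e inequality and the existence of the Sobolev extension operator, are carried over verbatim from the hypotheses of Theorem~\ref{t-hardy} that are still in force. Invoking Theorem~\ref{t-hardy} yields the desired Hardy inequality on $W^{1,p}_D(\Omega)$.

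I do not anticipate any genuine obstacle here; the entire argument is a one-line consequence of Lemma~\ref{l-lsets with hausdorff content} once compactness of $D$ is noted. The only point that requires some care is ensuring that the exponent $l$ transfers unchanged from the $l$-set condition to the $l$-thickness condition, which is precisely what Lemma~\ref{l-lsets with hausdorff content} guarantees.
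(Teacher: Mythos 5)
Your proposal is correct and coincides with the paper's own route: the corollary is obtained by combining Lemma~\ref{l-lsets with hausdorff content} (a compact $l$-set is $l$-thick with the same exponent) with Theorem~\ref{t-hardy}, the compactness of $D$ being clear since it is a closed subset of the boundary of a bounded domain. Nothing further is needed.
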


\section{The extension operator} \label{extens}
\noindent In this section we discuss the second condition in our main result
Theorem~\ref{t-hardy}, that is the extendability for $W^{1,p}_D(\Omega)$ within
the same class of Sobolev functions. We develop three abstract principles
concerning Sobolev extension.

\begin{itemize}
\item Dirichlet cracks can be removed: We open the possibility of passing from
$\Omega$ to another domain
$\Omega_\star$ with a reduced Dirichlet boundary part, while $\Gamma = \partial
\Omega \setminus D$ remains part of $\partial \Omega_\star$. In most cases this
improves the boundary geometry in the sense of Sobolev extendability, see the
example in the following Figure.

\begin{figure}[htbp]
\centerline{\includegraphics[scale=0.4]{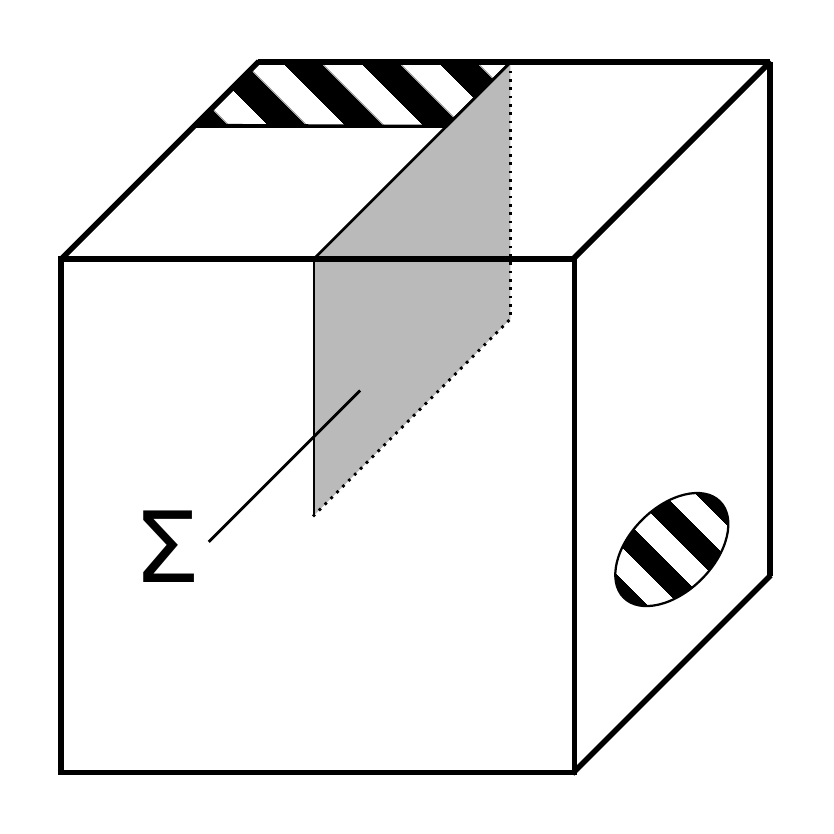}}
\caption{\label{fig-Kiste} 
The set $\Sigma$ does not belong to $\Omega$, and carries -- together with the
striped parts -- the Dirichlet condition.}
\end{figure}

\item Sobolev extendability is a local property: We show that only the local
geometry of the domain around the boundary
part $\Gamma$ plays a role for the existence of an extension operator.

\item Preservation of traces: We prove under very general geometric assumptions
that the extended
functions do have the adequate trace behavior on $D$ for \emph{every} extension
operator.
\end{itemize}
We believe that these results are of independent interest and therefore decided
to directly present them for higher-order Sobolev spaces $W_E^{k,p}$. In the
end we review some feasible commonly used geometric conditions which together
with our abstract principles really imply the corresponding extendability.

\subsection{Dirichlet cracks can be removed} \label{subsec: Removing cracks}
As in Figure~\ref{fig-Kiste} there may be boundary parts which carry a
Dirichlet condition and belong to the inner of the closure of the domain under
consideration. Then one can extend the functions on $\Lambda$ by $0$ to such a
boundary part, thereby enlarging the domain and simplifying the boundary
geometry. In the following we make this precise. 

\begin{lemma} \label{l-modigeb}
 Let $\Lambda \subset \R^d$ be a bounded domain and let $E \subset \partial
 \Lambda$ be closed. Define $\Lambda_\bigstar$ as the interior of the
 set $\Lambda \cup E$.
 Then the following hold true.
 \begin{enumerate} 
  \item The set $\Lambda_\bigstar$ is again a domain, $\Xi := \partial \Lambda
	\setminus E$ is a (relatively) open subset of $\partial
\Lambda_\bigstar$
	and $\partial \Lambda_\bigstar = \Xi \cup (E \cap \partial
	\Lambda_\bigstar)$.
 \item  Let $k \in \N$ and $p \in {[1,\infty[}$. Extending functions from
$W^{k,p}_E(\Lambda)$ by $0$ to $\Lambda_\bigstar$,
	one obtains an isometric extension operator
	$\mathrm{Ext}(\Lambda,\Lambda_\bigstar)$ from $W^{k,p}_E(\Lambda)$ onto
	$W^{k,p}_E(\Lambda_\bigstar)$.
 \end{enumerate}
\end{lemma}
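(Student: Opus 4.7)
The plan is to handle the point-set topology in (i) first, then for (ii) to define the extension on smooth test functions by literal zero extension and pass to the $W^{k,p}$-completion.

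For (i) I would start by observing the sandwich $\Lambda \subset \Lambda_\bigstar \subset \Lambda \cup E \subset \overline{\Lambda}$: the first inclusion holds because $\Lambda$ is open and therefore contained in the interior of $\Lambda \cup E$, the third because $E \subset \partial \Lambda \subset \overline{\Lambda}$. Since any set wedged between a connected set and its closure is connected, $\Lambda_\bigstar$ is at once a domain. Next I would identify its boundary. The inclusion $\partial \Lambda_\bigstar \subset \partial \Lambda = \Xi \cup E$ drops out of the same chain, because $\overline{\Lambda_\bigstar} \setminus \Lambda_\bigstar \subset \overline{\Lambda} \setminus \Lambda$. Conversely, if $\x \in \Xi$, then $\x$ is an accumulation point of $\Lambda \subset \Lambda_\bigstar$ but cannot belong to $\Lambda_\bigstar$ itself: by closedness of $E$ and $\x \notin E$, a small ball around $\x$ avoids $E$, so were it contained in $\Lambda \cup E$ it would already lie in $\Lambda$, contradicting $\x \in \partial \Lambda$. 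This yields $\Xi \subset \partial \Lambda_\bigstar$, and hence $\partial \Lambda_\bigstar = \Xi \cup (E \cap \partial \Lambda_\bigstar)$; relative openness of $\Xi = \partial \Lambda_\bigstar \cap (\R^d \setminus E)$ in $\partial \Lambda_\bigstar$ is then automatic because $E$ is closed.

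For (ii) the strategy is to define $\mathrm{Ext}(\Lambda,\Lambda_\bigstar)$ on $C^\infty_E(\Lambda)$ by literal extension by zero and verify that this is an isometric bijection onto $C^\infty_E(\Lambda_\bigstar)$; completion then delivers the operator on the full space. Given a representative $v = w|_\Lambda$ with $w \in C_0^\infty(\R^d)$ and $\supp(w) \cap E = \emptyset$, I would argue that its pointwise zero extension to $\Lambda_\bigstar$ agrees with $w|_{\Lambda_\bigstar}$: on $\Lambda$ both equal $w$, and on $\Lambda_\bigstar \setminus \Lambda$, which by the chain from (i) is a subset of $E$, the test function $w$ vanishes (in fact in an open neighborhood), so both candidates equal zero there. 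Thus the extension lies in $C^\infty_E(\Lambda_\bigstar)$, and conversely every element of $C^\infty_E(\Lambda_\bigstar)$ arises in this way from $w|_\Lambda$ with the same ambient $w$. Because $w$ and all its derivatives vanish identically on the negligible-for-integration-purposes set $\Lambda_\bigstar \setminus \Lambda$, integrating $|D^\alpha w|^p$ over $\Lambda_\bigstar$ reproduces the integral over $\Lambda$, so the map is an isometric bijection between dense subspaces and extends by density to an isometric isomorphism $W^{k,p}_E(\Lambda) \to W^{k,p}_E(\Lambda_\bigstar)$.

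The only genuinely delicate step I foresee is the pointwise identification between the zero extension of $v$ and the restriction $w|_{\Lambda_\bigstar}$ of the ambient test function: this is what makes the extended function smooth and supported away from $E$ in the sense of Definition~\ref{d-WkpD}. It rests squarely on the containment $\Lambda_\bigstar \setminus \Lambda \subset E$ extracted in (i) together with $\supp(w) \cap E = \emptyset$. Once this is in place, isometry, surjectivity onto the dense subspace, and the completion argument are all routine.
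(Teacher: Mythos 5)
Your proposal is correct and follows essentially the same route as the paper: the sandwich $\Lambda \subset \Lambda_\bigstar \subset \overline{\Lambda}$ for connectedness and the boundary identification in (i), and in (ii) the observation that the zero extension of $v = w|_\Lambda$ coincides with $w|_{\Lambda_\bigstar}$ because $\supp(w)$ stays away from $E \supset \Lambda_\bigstar \setminus \Lambda$, giving an isometric bijection of the dense subspaces $C^\infty_E(\Lambda) \to C^\infty_E(\Lambda_\bigstar)$ that extends by completion. The only cosmetic caveat is the phrase ``negligible-for-integration-purposes'': the set $\Lambda_\bigstar \setminus \Lambda \subset \partial\Lambda$ need not have Lebesgue measure zero, but your actual justification (the integrand vanishes identically there) is exactly the right one.
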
 

\begin{proof}
\begin{enumerate}
  \item Due to the connectedness of $\Lambda$ and the set inclusion
$\Lambda
\subset
	\Lambda_\bigstar \subset \overline \Lambda$, the set $\Lambda_\bigstar$
is
	also connected, and, hence a domain. Obviously, one has
	$\overline{\Lambda_\bigstar} = \overline \Lambda$. This, together with
the
	inclusion $\Lambda \subset \Lambda_\bigstar$ leads to
	$\partial \Lambda_\bigstar \subset \partial \Lambda$. Since $\Xi \cap
	\Lambda_\bigstar = \emptyset$, one gets $\Xi \subset
	\partial \Lambda_\bigstar$. Furthermore, $\Xi$ was relatively open in
	$\partial \Lambda$, so it is relatively open also in $\partial
	\Lambda_\bigstar$.

	The last asserted equality follows from $\partial \Lambda_\bigstar = (
	\Xi \cap \partial \Lambda_\bigstar) \cup (E \cap
	\partial \Lambda_\bigstar)$ and $\Xi \subset \partial \Lambda_\bigstar$.
  \item Consider any $\psi \in C^\infty_E(\R^d)$ and its restriction
	$\psi|_\Lambda$ to $\Lambda$. Since the support of $\psi$ has a positive
	distance to $E$, one may extend $\psi|_\Lambda$ by $0$ to the whole of
	$\Lambda_\bigstar$ without destroying the $C^\infty$-property.
        Thus, this extension operator
	provides a linear isometry from $C^\infty_E(\Lambda)$ \emph{onto}
	$C^\infty_E(\Lambda_\bigstar)$ (if both are equipped with the 
	$W^{k,p}$-norm). This extends to a linear extension operator
	$\mathrm{Ext}(\Lambda,\Lambda_\bigstar)$ from $W^{k,p}_E(\Lambda)$ onto
	$W^{k,p}_E(\Lambda_\bigstar)$, see the two following commutative
	diagrams:

\begin{minipage}{0.85\textwidth}
\begin{equation*}
\begin{tikzcd}[row sep=large, column sep=huge]
C^{\infty}_E (\mathbb{R}^d)
\arrow{d}[swap]{\operatorname*{restrict}_{\mathbb{R}^d\to\Lambda_\bigstar}}
\arrow{r}{\operatorname*{restrict}_{\mathbb{R}^d\to\Lambda}} 
& 
C^{\infty}_E(\Lambda) 
\arrow{ld}{\operatorname*{extend}_{\Lambda\to {\Lambda_\bigstar}}}
\\
C^{\infty}_E ({\Lambda_\bigstar})
\end{tikzcd}
\qquad
\begin{tikzcd}[row sep=large, column sep=huge]
	W^{k,p}_E (\mathbb{R}^d)
 	\arrow{r}{\operatorname*{restrict}_{\mathbb{R}^d\to\Lambda}}
\arrow{d}[swap]{\operatorname*{restrict}_{\mathbb{R}^d\to{\Lambda_\bigstar}}}
 	& W^{k,p}_E (\Lambda)
 	\arrow{ld}{\operatorname*{extend}_{\Lambda\to {\Lambda_\bigstar}}}
 	\\  
 	W^{k,p}_E ({\Lambda_\bigstar})
 \end{tikzcd}
\end{equation*}
\end{minipage}
\end{enumerate}
\end{proof}
 
\begin{remark} \label{r-arbitrar}
 \begin{enumerate}
  \item Note that no assumptions on $E$ beside closedness are necessary.
  \item Having extended the functions from $\Lambda$ to $\Lambda_\bigstar $, the
	'Dirichlet crack' $\Sigma$ in Figure~\ref{fig-Kiste} has vanished, and
	one ends up with the whole cube. Here the problem of extending Sobolev
	functions is almost trivial. We suppose that this is the generic case --
	at least for problems arising in applications.
 \end{enumerate} 
\end{remark}

The above considerations suggest the following procedure: extend the functions
from $W^{k,p}_E(\Lambda)$ first to $\Lambda_\bigstar$, and afterwards to the
whole of $\R^d$. The next lemma shows that this approach is universal.

\begin{lemma} \label{l-exten0}
 Let $k \in \N$ and $p \in {[1,\infty[}$. Let $\Lambda \subset \R^d$ be a
 bounded domain, let $E \subset \partial \Lambda$ be closed and as before define
 $\Lambda_\bigstar$ as the interior of the set $\Lambda \cup E$. Every linear,
 continuous extension operator $\mathfrak F : W^{k,p}_E(\Lambda)
 \to W^{k,p}_E(\R^d)$ factorizes as $\mathfrak F = {\mathfrak F}_\bigstar
 \mathrm{Ext}(\Lambda, \Lambda_\bigstar)$ through a linear, continuous extension
 operator ${\mathfrak F}_\bigstar : W^{k,p}_E(\Lambda_\bigstar) \to
 W^{k,p}_E(\R^d)$.
\end{lemma}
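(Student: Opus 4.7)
The approach is to exploit Lemma~\ref{l-modigeb}(ii), which says that $\mathrm{Ext}(\Lambda,\Lambda_\bigstar)$ is an isometric isomorphism from $W^{k,p}_E(\Lambda)$ onto $W^{k,p}_E(\Lambda_\bigstar)$. Consequently it admits a bounded linear inverse
\[
R \colon W^{k,p}_E(\Lambda_\bigstar) \longrightarrow W^{k,p}_E(\Lambda),
\]
which acts as restriction to $\Lambda$ on the dense subspace $C^\infty_E(\Lambda_\bigstar)$. The only possible choice for the factorization is then $\mathfrak{F}_\bigstar := \mathfrak{F} \circ R$; this is linear and continuous as a composition of bounded linear maps, and manifestly $\mathfrak{F}_\bigstar \circ \mathrm{Ext}(\Lambda,\Lambda_\bigstar) = \mathfrak{F} \circ R \circ \mathrm{Ext}(\Lambda,\Lambda_\bigstar) = \mathfrak{F}$.

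The content of the lemma is therefore the assertion that $\mathfrak{F}_\bigstar$ is still an \emph{extension} operator, i.e.\ $(\mathfrak{F}_\bigstar v)|_{\Lambda_\bigstar} = v$ in $W^{k,p}(\Lambda_\bigstar)$ for every $v \in W^{k,p}_E(\Lambda_\bigstar)$. Since $v \mapsto (\mathfrak{F}_\bigstar v)|_{\Lambda_\bigstar}$ and the inclusion $v \mapsto v$ are both continuous linear maps into $W^{k,p}(\Lambda_\bigstar)$, and $C^\infty_E(\Lambda_\bigstar)$ is dense, it is enough to check the equality for a test function $\psi \in C^\infty_E(\Lambda_\bigstar)$. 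For such $\psi$ one has $R\psi = \psi|_\Lambda \in C^\infty_E(\Lambda)$, and the extension property of $\mathfrak{F}$ gives $(\mathfrak{F}_\bigstar \psi)|_\Lambda = \psi|_\Lambda$ on $\Lambda$.

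The delicate step — the one obstacle — is the remainder $\Lambda_\bigstar \setminus \Lambda \subset E \subset \partial\Lambda$, which in pathological situations may have positive Lebesgue measure, so that equality on $\Lambda$ alone does not settle equality on $\Lambda_\bigstar$. On this remainder the smooth $\psi$ vanishes identically because $\mathrm{supp}(\psi) \cap E = \emptyset$, whereas $\mathfrak{F}_\bigstar \psi$ lives only in $W^{k,p}_E(\R^d)$ with no a priori pointwise meaning. To bridge the gap I would invoke the Hedberg--Wolff synthesis (Theorem~\ref{t-coincid}) to identify $\mathfrak{F}_\bigstar \psi$ with an element of $\cW^{k,p}_E(\R^d)$, whence it vanishes $(k,p)$-quasieverywhere on $E$ in the Lebesgue-point sense of \eqref{e-pointwi}--\eqref{e-Lebesgue}. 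Applying the Comparison Theorem~\ref{t-comparison} with $\alpha = k$ and $l = d$ (so that $d-l = 0 < kp$) then upgrades $(k,p)$-q.e.\ vanishing to Lebesgue-a.e.\ vanishing on $E$, hence on $\Lambda_\bigstar \setminus \Lambda$. Together with the equality on $\Lambda$ this yields $(\mathfrak{F}_\bigstar \psi)|_{\Lambda_\bigstar} = \psi$ almost everywhere, and density concludes the proof.
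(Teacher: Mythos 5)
Your factorization is exactly the paper's: the authors also set $\mathfrak F_\bigstar := \mathfrak F \circ \mathfrak S$, where $\mathfrak S$ is the restriction operator inverse to $\mathrm{Ext}(\Lambda,\Lambda_\bigstar)$ furnished by Lemma~\ref{l-modigeb}(ii), and then check the algebraic identity and the norm bound; they leave the verification that $\mathfrak F_\bigstar$ is still an \emph{extension} operator implicit. You are right that the only point needing attention is the set $\Lambda_\bigstar \setminus \Lambda \subset E$, which may have positive Lebesgue measure. However, your way of handling it -- Hedberg--Wolff synthesis plus the Comparison Theorem -- is heavier than necessary and, more importantly, is not available in the full range of the lemma: Theorems~\ref{t-coincid} and \ref{t-comparison} require $p \in {]1,\infty[}$, whereas Lemma~\ref{l-exten0} is stated for $p \in {[1,\infty[}$, so your argument as written does not cover $p=1$. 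The gap is closed by an elementary observation that works for all $p \geq 1$: by definition $\mathfrak F_\bigstar \psi = \mathfrak F(\psi|_\Lambda)$ lies in $W^{k,p}_E(\R^d)$, hence is an $L^p(\R^d)$-limit of functions from $C^\infty_E(\R^d)$, each of which vanishes identically on a neighborhood of $E$; integrating $|\mathfrak F_\bigstar\psi - \varphi_n|^p$ over $E$ and letting $n \to \infty$ shows $\mathfrak F_\bigstar \psi = 0$ a.e.\ on $E$, and $\psi$ itself vanishes there since $\supp(\psi)\cap E = \emptyset$. Together with the agreement on $\Lambda$ this gives $(\mathfrak F_\bigstar\psi)|_{\Lambda_\bigstar} = \psi$ a.e., and your density argument finishes the proof. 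With this replacement your proof is complete and coincides with the paper's, made fully explicit.
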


\begin{proof}
Let $\mathfrak S$ be the restriction operator from $W^{k,p}_E(\Lambda_\bigstar)$
to $W^{k,p}_E(\Lambda)$. Then we define, for every $f \in
W^{k,p}_E(\Lambda_\bigstar)$,
$\mathfrak F_\bigstar f:=\mathfrak F \mathfrak Sf$. We obtain
$\mathfrak F_\bigstar \mathrm{Ext}(\Lambda, \Lambda_\bigstar)=\mathfrak F
\mathfrak S
\mathrm{Ext}(\Lambda, \Lambda_\bigstar) =\mathfrak F$.
This shows that the factorization holds algebraically. However, one also has
 \begin{align*}
   \|{\mathfrak F}_\bigstar \mathrm{Ext}(\Lambda, \Lambda_\bigstar) f
	\|_{W^{k,p}_E(\R^d)} &= \| \mathfrak F f \|_{W^{k,p}_E(\R^d)} \le
	\| \mathfrak F \|_{\mathcal L(W^{k,p}_E(\Lambda);W^{k,p}_E(\R^d))}
	\|f\|_{W^{k,p}_E(\Lambda)} \\
   &= \|\mathfrak F \|_{\mathcal L(W^{k,p}_E(\Lambda);W^{k,p}_E(\R^d))}
	\| \mathrm{Ext}(\Lambda, \Lambda_\bigstar) f
	\|_{W^{k,p}_E(\Lambda_\bigstar)}.
  \qedhere
 \end{align*}
\end{proof}

Having extended the functions already to $\Lambda_\bigstar$ one may proceed
as follows: Since $E$ is closed, so is $E_\bigstar := E \cap \partial
\Lambda_\bigstar$. So, one can now consider the space
$W^{1,p}_{E_\bigstar}(\Lambda_\bigstar)$ and has the task to establish an
extension operator for this space -- while afterwards one has to take into
account that the original functions were $0$ also on the set $E \cap
\Lambda_\bigstar$ and have not been altered by the extension operator
thereon. However, note carefully that $E_\bigstar := E \cap \partial
\Lambda_\bigstar$ may have a worse geometry than $E$. For example, take Figure 2
and suppose that this time only $\Sigma$ forms the whole Dirichlet part of the
boundary. Then $E$ is a $(d-1)$-set whereas even $\H_{d-1}(E_\bigstar)=0$ holds.

To sum up, if one aims at an extension operator $\mathfrak E :
W^{k,p}_E(\Lambda) \to W^{k,p}_E(\R^d)$, one is free to modify the domain
$\Lambda$ to $\Lambda_\bigstar$. In most cases this improves the local geometry
concerning Sobolev extensions and we do not have examples where the situation
gets worse. Though we do not claim that this is, in a whatever precise sense,
the generic case.

\subsection{Sobolev extendability is a local property}\label{subsec:
extentability is local}
Below, we make precise in which sense Sobolev extendability is a local property.
We set up the following notation.

\begin{definition} \label{d-Sobolev extension domain}
A domain $\Lambda \subset \R^d$ is a \emph{$W^{k,p}$-extension domain} for given
$k \in \N$ and $p \in {[1,\infty[}$ if there exists a continuous extension
operator $\mathfrak{E}_{k,p}: W^{k,p}(\Lambda)\to W^{k,p}(\R^d)$. If $\Lambda$
is a $W^{k,p}$-extension domain for all $k \in \N$ and all $p \in {[1,\infty[}$
in virtue of the \emph{same} extension operator, then $\Lambda$ is
a \emph{universal Sobolev extension domain}.
\end{definition}

\begin{proposition} \label{p-extension}
Let $k \in \N$ and $p \in [1,\infty[$. Let $\Lambda$ be a bounded domain and
let $E$ be a closed part of its boundary. Assume that for every $\x \in
\overline{\partial \Lambda \setminus E}$ there is an open neighborhood $U_\x$ of
$ \x$ such that $\Lambda \cap U_\x$ is a $W^{k,p}$-extension domain. Then
there is a continuous extension operator
 \begin{align*}
  \mathfrak{E}_{k,p}: W^{k,p}_E(\Lambda) \to W^{k,p}(\R^d).
 \end{align*}
 Moreover, if each local extension operator $\mathfrak{E}_\x$ maps the space
 $W_{E_\x}^{k,p}(\Lambda \cap U_\x)$ into $W_{E_\x}^{k,p}(\R^d)$, where $E_\x :=
 \overline{E \cap U_\x} \subset \partial(\Lambda \cap U_\x)$, then also
 \begin{align*}
  \mathfrak{E}_{k,p}: W^{k,p}_E(\Lambda) \to W^{k,p}_E(\R^d).
 \end{align*}
\end{proposition}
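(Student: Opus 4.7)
The plan is to glue $u$ together from local extensions using a finite partition of unity on $\overline\Lambda$. Away from $E$ the hypothesis directly supplies a Sobolev extension; near the portion of $\partial\Lambda$ lying inside $E$ one must instead extend by zero, exploiting that functions in $W_E^{k,p}(\Lambda)$ are approximated by smooth functions vanishing in a neighborhood of $E$.

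First I set up the cover. The set $K := \overline{\partial\Lambda\setminus E}$ is compact, so finitely many of the given neighborhoods $U_1,\dots,U_N$ (with associated local extension operators $\mathfrak E_1,\dots,\mathfrak E_N$) cover $K$. The compact remainder $F := \partial\Lambda\setminus\bigcup_{i=1}^N U_i$ is disjoint from $K$ and, since $\partial\Lambda\setminus E \subset K$, contained in $E$. I separate $F$ and $K$ by an open neighborhood $U_0\supset F$ with $\overline{U_0}\cap K = \emptyset$, which forces $\overline{U_0}\cap\partial\Lambda\subset E$. I add an open $V$ with $\overline V\subset \Lambda$ covering the interior piece $\overline\Lambda\setminus(U_0\cup\bigcup_iU_i)$ and fix a smooth partition of unity $\phi_{-1},\phi_0,\phi_1,\dots,\phi_N$ with supports compactly contained in $V,U_0,U_1,\dots,U_N$ respectively, on a neighborhood of $\overline\Lambda$. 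For each $i\ge 1$ I also pick an auxiliary $\tilde\phi_i\in C_c^\infty(U_i)$ with $\tilde\phi_i\equiv 1$ on $\supp\phi_i$. The extension operator is then defined by
\begin{align*}
\mathfrak E_{k,p} u := \mathrm{ext}_0(\phi_{-1}u) + \mathrm{ext}_0(\phi_0 u) + \sum_{i=1}^N \tilde\phi_i\,\mathfrak E_i\bigl(\phi_i u|_{\Lambda\cap U_i}\bigr),
\end{align*}
where $\mathrm{ext}_0$ denotes extension by zero from $\Lambda$ to $\R^d$.

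The boundedness of the interior and $U_i$-contributions is immediate, and $\sum_i\phi_i\equiv 1$ on $\overline\Lambda$ together with $\tilde\phi_i\equiv 1$ on $\supp\phi_i$ yields $\mathfrak E_{k,p} u|_\Lambda = u$. The main obstacle I anticipate is the $U_0$-contribution. For a smooth $v = \tilde v|_\Lambda \in C_E^\infty(\Lambda)$ with $\supp\tilde v\cap E=\emptyset$, the product $\phi_0 v$ vanishes in an open $\R^d$-neighborhood of $\partial\Lambda$: near $\partial\Lambda\cap U_0\subset E$ because $\tilde v$ vanishes there, and elsewhere because $\supp\phi_0$ is compactly contained in $U_0$ and hence at positive distance from $\partial\Lambda\setminus U_0$. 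Consequently $\mathrm{ext}_0(\phi_0 v)\in C_c^\infty(\R^d)$, even in $C_E^\infty(\R^d)$, with $W^{k,p}$-norm dominated by $c_{\phi_0}\|v\|_{W^{k,p}(\Lambda)}$, and density of $C_E^\infty(\Lambda)$ in $W_E^{k,p}(\Lambda)$ promotes this to a bounded operator into $W_E^{k,p}(\R^d)$.

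For the refined second assertion it remains to check that each summand of $\mathfrak E_{k,p} u$ lies in $W_E^{k,p}(\R^d)$. The interior piece is trivial (support compact in $\Lambda$, disjoint from $E$), and the $U_0$-piece has just been produced as a limit of $C_E^\infty(\R^d)$-functions. For the $U_i$-piece, the same approximation scheme shows $\phi_i u|_{\Lambda\cap U_i}\in W^{k,p}_{E_i}(\Lambda\cap U_i)$, so the refined hypothesis on $\mathfrak E_i$ places $\mathfrak E_i(\phi_i u|_{\Lambda\cap U_i})$ in $W^{k,p}_{E_i}(\R^d)$. Multiplying by $\tilde\phi_i$ with $\supp\tilde\phi_i$ compactly contained in $U_i$ then forces the product to vanish on all of $E$: on $E\cap U_i\subset E_i$ the extension itself is approximated by functions vanishing there, while on $E\setminus U_i$ it is the cutoff $\tilde\phi_i$ that does so, in view of the separation of $\supp\tilde\phi_i$ from $\R^d\setminus U_i$.
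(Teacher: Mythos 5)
Your proof is correct and follows essentially the same route as the paper: a finite cover of $\overline{\partial\Lambda\setminus E}$, a partition of unity, extension by zero for the piece supported away from $\overline{\partial\Lambda\setminus E}$ (using that $C_E^\infty(\Lambda)$-functions vanish near $E$, then density), local extension operators composed with cutoffs $\tilde\phi_i$ for the remaining pieces, and the same support argument for trace preservation in the refined assertion. The only cosmetic difference is that you split the paper's single ``far from $\overline{\partial\Lambda\setminus E}$'' set $U$ into an interior piece $V$ and a piece $U_0$ around $\partial\Lambda\cap E$ away from the cover, which changes nothing of substance.
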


\begin{proof}
 For the construction of the extension operator let for every $\x \in
 \overline{\partial \Lambda \setminus E}$ denote
 $U_\x$ the open neighborhood of $\x$ from the assumption.
 Let $U_{\x_1}, \ldots, U_{\x_n}$ be a finite subcovering of 
 $\overline{\partial \Lambda \setminus E}$. Since the compact set
 $\overline{\partial \Lambda \setminus E}$ is contained in the open set
 $\bigcup_j U_{\x_j}$, there is an $\epsilon > 0$, such that the sets
 $U_{\x_1},\ldots,U_{\x_n}$, together with the open set $U :=
 \{\mathrm y \in \R^d : \mathrm{dist}(\mathrm y, \overline{\partial \Lambda
 \setminus E}) > \epsilon \}$, form an open covering of $\overline \Lambda$.
 Hence, on $\overline \Lambda$ there is a $C_0^\infty$-partition of unity $\eta,
 \eta_1, \ldots, \eta_n$, with the properties $\mathrm{supp}(\eta) \subset U$,
 $\mathrm{supp}(\eta_j) \subset U_{\x_j}$.

 Assume $\psi \in C^\infty_E(\Lambda)$. Then $\eta \psi \in
 C^\infty_0(\Lambda)$. If one extends this function by $0$ outside of $\Lambda$,
 then one obtains a function $\varphi \in
 C^\infty_{\partial \Lambda}(\R^d) \subset C^\infty_E(\R^d) \subset
 W^{k,p}_E(\R^d)$ with the property $\|\varphi\|_{W^{k,p}(\R^d)} =
 \|\eta \psi\|_{W^{k,p}(\Lambda)}$.

 Now, for every fixed $j \in \{1, \ldots, n\}$, consider the function $\psi_j
 := \eta_j \psi \in W^{k,p}(\Lambda \cap U_{\x_j})$. Since $\Lambda \cap
 U_{\x_j}$ is a $W^{k,p}$-extension domain by assumption, there is an
 extension of $\psi_j$ to a $W^{k,p}(\R^d)$-function $\varphi_j$ together with
 an estimate $\|\varphi_j\|_{W^{k,p}(\R^d)} \le c
 \|\psi_j\|_{W^{k,p}( \Lambda \cap U_{\x_j})}$, where $c$ is independent
 from $\psi$. Clearly, one has a priori no control on the behavior of
 $\varphi_j$ on the set $\Lambda \setminus U_{\x_j}$. In particular
 $\varphi_j$ may there be nonzero and, hence, cannot be expected to coincide
 with $\eta_j \psi$ on the whole of $\Lambda$. In order to correct
 this, let $\zeta_j$ be a $C^\infty_0(\R^d)$-function which is identically $1$
 on $\mathrm{supp}(\eta_j)$ and has its support in $U_{\x_j}$. Then
 $\eta_j \psi$ equals $\zeta_j \varphi_j $ on all of $\Lambda$. Consequently,
 $\zeta_j \varphi_j$ really is an extension of $\eta_j \psi$ to the whole of
 $\R^d$ which, additionally, satisfies the estimate
 \[ \| \zeta_j \varphi_j \|_{W^{k,p}(\R^d)} \le c \| \varphi_j
	\|_{W^{k,p}(\R^d)}\le c \| \eta_j \psi
	\|_{W^{k,p}(\Lambda \cap U_{\x_j})} \le c
	\|\psi\|_{W^{k,p}(\Lambda)},
 \]
 where $c$ is independent from $\psi$. Thus, defining $\mathfrak E_{k,p}(\psi) =
 \varphi + \sum_j \zeta_j \varphi_j$ one gets a linear, continuous extension
 operator from $C^\infty_E(\Lambda)$ into $W^{k,p}(\R^d)$. By density,
 $\mathfrak{E}_{k,p}$ uniquely extends to a linear, continuous operator
 \begin{equation*}
  \mathfrak{E}_{k,p}:W^{k,p}_E(\Lambda) \to W^{k,p}(\R^d).
\end{equation*} 

 Finally, assume that the local extension operators map $W^{k,p}_{E_{\x_j}}
(\Lambda \cap U_{\x_j})$ into $W^{k,p}_{E_{\x_j}}(\R^d)$. Using the notation
above, this means that $\varphi_j$ can be approximated in $W^{k,p}(\R^d)$ by a
sequence from $C_{E_{\x_j}}^\infty(\R^d)$. Since $\zeta_j$ is supported in
$U_{x_j}$, multiplication by $\zeta_j \in C_0^\infty(\R^d)$ maps
$C_{E_{\x_j}}^\infty(\R^d)$ into $C_{E}^\infty(\R^d)$ boundedly with respect
to the $W^{k,p}(\R^d)$-topology. Hence, $\zeta_j \varphi_j \in
W^{k,p}_E(\R^d)$. Since in any case $\varphi \in W^{k,p}_E(\R^d)$, the
conclusion follows.
\end{proof}

\begin{remark} \label{r-unif}
By construction one gets uniformity for $\mathfrak{E}$ with respect to $p$ and
$k$ if one invests the respective uniformity concerning the extension property
for the local domains $\Lambda \cap U_\x$. In particular, one obtains an
extension operator that is bounded from $W_E^{k,p}(\Lambda)$ into
$W^{k,p}(\R^d)$ for each $k \in \N$ and each $p \in {[1,\infty[}$ if the local
domains are universal Sobolev extension domains.
\end{remark}

\subsection{Preservation of traces} \label{Subsec: preservation of traces}
Proposition~\ref{p-extension} allows to construct Sobolev extension operators
from $W^{k,p}_D(\Omega)$ into $W^{k, p}(\R^d)$ and gives a sufficient condition
for preservation of the Dirichlet condition. In this section we prove that in
fact \emph{every} such extension operator has this feature. Recall that this is
the crux of the matter in Assumption~\ref{t-hardy:iii} of
Theorem~\ref{t-hardy}.
The key lemma is the following.

\begin{lemma} \label{l-trace}
Let $k \in \N$ and $p \in {]1,\infty[}$. Let $\Lambda \subset \R^d$ be a
domain, let $E \subset \partial \Lambda$ be closed and let $\mathfrak{E}_{k,p}:
W^{k,p}_E(\Lambda) \to W^{k,p}(\R^d)$ be a bounded extension operator. Any of
the following conditions guarantees that $\mathfrak{E}_{k,p}$ in fact maps into
$W_E^{k,p}(\R^d)$.
\begin{enumerate}
 \item For $(k,p)$-quasievery $\y \in E$ balls around $\mathrm y$ in
 $\Lambda$ have \emph{asymptotically nonvanishing relative volume}, i.e.
 \begin{align} \label{e-asymp}
  \liminf_{r \to 0} \frac {|B(\y, r) \cap \Lambda)|}{r^d} > 0.
 \end{align}

 \item The set $E$ is an $l$-set for some $l \in {]d-p,d]}$ and \eqref{e-asymp}
 holds for $\H_l$-almost every $\y \in E$.

 \item There exists $q > d$ such that $\mathfrak{E}_{k,p}$ maps
 $C_E^\infty(\Lambda)$ into $W^{k,q}(\R^d)$.
\end{enumerate}
\end{lemma}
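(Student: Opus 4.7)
My plan is, for each $u \in W^{k,p}_E(\Lambda)$ and each multi-index $\beta$ with $|\beta|\leq k-1$, to verify that the $(k-|\beta|,p)$-quasi-continuous representative of $D^\beta(\mathfrak E_{k,p}u)$ vanishes $(k-|\beta|,p)$-quasi-everywhere on $E$. By Theorem~\ref{t-coincid} this will place $\mathfrak E_{k,p}u$ in $W_E^{k,p}(\R^d)$. I will approximate $u$ by a sequence $\psi_n \in C_E^\infty(\Lambda)$ and set $v := \mathfrak E_{k,p}u$, $v_n := \mathfrak E_{k,p}\psi_n$, so that $v_n \to v$ in $W^{k,p}(\R^d)$ and $D^\beta v_n \to D^\beta v$ in $W^{k-|\beta|,p}(\R^d)$ for every relevant $\beta$.

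For case (i), I would fix $n$ and $\beta$ and apply the $L^p$-Lebesgue-point property \eqref{e-Lebesgue} to $D^\beta v_n\in W^{k-|\beta|,p}(\R^d)$: at $(k-|\beta|,p)$-q.e.\ point $\y$ the $(k-|\beta|,p)$-quasi-continuous representative, which I denote $\u_n^\beta$, satisfies
\[
 \lim_{r \to 0}\frac{1}{|B(\y,r)|}\int_{B(\y,r)} |D^\beta v_n(\x) - \u_n^\beta(\y)|^p \; \d\x = 0.
\]
Picking such a $\y\in E$ that additionally enjoys \eqref{e-asymp} (which, by hypothesis~(i), holds $(k,p)$-q.e., hence a fortiori $(k-|\beta|,p)$-q.e.), I exploit that $\psi_n$ vanishes on some ball around $\y$ intersected with $\Lambda$: this reduces the integrand over $B(\y,r)\cap\Lambda$ to the constant $|\u_n^\beta(\y)|^p$, so that dividing by $|B(\y,r)|$ and letting $r\to 0$ forces $|\u_n^\beta(\y)|^p\cdot\liminf_{r\to 0}\frac{|B(\y,r)\cap\Lambda|}{|B(\y,r)|}=0$, and \eqref{e-asymp} then yields $\u_n^\beta(\y)=0$. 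Finally, extracting a subsequence along which $\u_n^\beta\to \u^\beta$ pointwise $(k-|\beta|,p)$-q.e.\ (a standard consequence of $W^{k-|\beta|,p}$-convergence, cf.\ \cite[Prop.~6.1.4]{ad/he}) will give $\u^\beta=0$ $(k-|\beta|,p)$-q.e.\ on $E$, and Theorem~\ref{t-coincid} closes the case.

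For case (ii) I plan to run essentially the same argument, replacing Theorem~\ref{t-coincid} by Theorem~\ref{t-coincid-mitrea}, which characterises $W_E^{k,p}(\R^d)$ for $l$-sets $E$ via $\H_l$-a.e.\ vanishing. Under the weaker hypothesis that \eqref{e-asymp} holds only $\H_l$-a.e.\ on $E$ the squeezing step still produces $\u_n^\beta(\y)=0$ at $\H_l$-a.e.\ $\y\in E$, and Theorem~\ref{t-comparison}, applicable since $l>d-p\geq d-(k-|\beta|)p$, will upgrade the subsequential $(k-|\beta|,p)$-q.e.\ convergence of the $\u_n^\beta$ to $\H_l$-a.e.\ convergence on $E$. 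For case (iii) I would invoke Morrey's embedding $W^{k,q}(\R^d)\hookrightarrow C^{k-1,\alpha}(\R^d)$ for $q>d$, making each $D^\beta v_n$ genuinely continuous for $|\beta|\leq k-1$; since every $\y\in E\subset\partial\Lambda$ is approachable through points of $\Lambda$ at which $D^\beta\psi_n$ eventually vanishes, continuity forces $D^\beta v_n(\y)=0$ pointwise on $E$. Hence $v_n\in W_E^{k,p}(\R^d)$ by Theorem~\ref{t-coincid}, and closedness of this subspace inside $W^{k,p}(\R^d)$ propagates the conclusion to $v$.

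The hard part will be the squeezing argument in case (i): wringing pointwise vanishing of $\u_n^\beta(\y)$ out of the $L^p$-Lebesgue-point property with \eqref{e-asymp} as the only available geometric information. Once that is in hand, the rest of the proof is careful book-keeping among the classes of quasi-everywhere statements produced by different capacities and the $l$-dimensional Hausdorff measure, arbitrated by Theorems~\ref{t-coincid}, \ref{t-coincid-mitrea}, and \ref{t-comparison}.
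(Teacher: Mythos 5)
Your proposal is correct and follows essentially the same route as the paper: reduce to the dense class $C_E^\infty(\Lambda)$, use the $L^p$-Lebesgue-point property of the quasicontinuous representative of $D^\beta(\mathfrak{E}_{k,p}\psi_n)$ together with \eqref{e-asymp} to force it to vanish $(k-|\beta|,p)$-q.e.\ (resp.\ $\H_l$-a.e., resp.\ everywhere) on $E$, and conclude via Theorem~\ref{t-coincid} (resp.\ Theorem~\ref{t-coincid-mitrea}, plus closedness of $W_E^{k,p}(\R^d)$ to pass to the limit). Your squeezing step --- restricting the $L^p$-oscillation integral to $B(\y,r)\cap\Lambda$, where $D^\beta v_n$ vanishes for small $r$, so that only $|B(\y,r)\cap\Lambda|\,|B(\y,r)|^{-1}|\u_n^\beta(\y)|^p$ survives --- is in fact a slight streamlining of the paper's version of this step, which instead builds an exceptional set $W$ of vanishing Lebesgue density at $\y$ and extracts a sequence of points of $\Lambda\setminus W$ converging to $\y$.
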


\begin{proof}
As $C_E^\infty(\Omega)$ is dense in $W_E^{k,p}(\Lambda)$ and since
$\mathfrak{E}_{k,p}$ is bounded, it suffices to prove that given $v \in
C_E^\infty$ the function $u:= \mathfrak{E}_{k,p} v$ belongs
to $W_E^{k,p}(\R^d)$. The proof of (i) is inspired by \cite[pp.\
190-192]{ziemer}. Easy modifications of the argument will yield (ii) and
(iii). 

\begin{enumerate}

\item Fix an arbitrary multiindex $\beta$ with $|\beta|\leq k-1$. Let
  $\mathfrak{D}^\beta \u$ be the representative of the distributional derivative
  $D^\beta u$ of $u$ defined $(k-|\beta|,p)$-q.e.\ on $\R^d$ via
  \begin{align*}
  \mathfrak{D}^\beta \u(\y) := \lim_{r \to 0}
  \frac{1}{|B(\y,r)|}\int_{B(\y,r)}D^\beta u(\x) \; \d \x.
  \end{align*}
  Recall from \eqref{e-Lebesgue} that then
  \begin{align}
  \label{e-Lebesgue for rep}
  \begin{split}
  &\lim_{r \to 0} \frac {1}{|B(\y,r)|} \int_{B(\y,r)} |\mathfrak{D}^\beta \u(
  \x)-\mathfrak{D}^\beta \u(\y)| \; \d \x \\
  &\leq
  \lim_{r \to 0} \bigg(\frac {1}{|B(\y,r)|} \int_{B(\y,r)} |\mathfrak{D}^\beta
  \u( \x)-\mathfrak{D}^\beta \u(\y)|^p \; \d \x \bigg)^{1/p} = 0.
  \end{split}
  \end{align}
  holds for $(k-|\beta|,p)$-q.e.\ $\y \in \R^d$. Since \eqref{e-asymp} holds for
  $(k,p)$-quasievery $\y \in E$, it a fortiori holds for
  $(k-|\beta|,p)$-quasievery such $\y$. Let now $N \subset \R^d$ be the
  exceptional set such that on $\R^d \setminus N$ the function
  $\mathfrak{D}^\beta \u$ is defined and satisfies \eqref{e-Lebesgue for rep}
  and such that \eqref{e-asymp} holds for every $\y \in E \setminus N$. Owing to
  Theorem~\ref{t-coincid} the claim follows once we have shown
  $\mathfrak{D}^\beta  \u(\y) = 0$ for all $\y \in E \setminus N$. 

  For the rest of the proof we fix $y \in E \setminus N$. For $r>0$ we
  abbreviate $B(r):=B(\y,r)$ and define
  \begin{align}\label{d-set_quasicontinuity1}
  W_j:=\{ \x \in \R^d \setminus N: |\mathfrak{D}^\beta \u(\x) - D^\beta \u(\y)|
  > 1/j\}.
  \end{align}
  Thanks to \eqref{e-Lebesgue for rep} for each $j \in \N$ we can choose some 
  $r_j > 0$ such that $|B(r) \cap W_j| < 2^{-j} |B(r)|$ holds for all $r \in
  {]0,r_j]}$. Clearly, we can arrange that the sequence $\{r_j\}_j$
  is decreasing. Now, 
  \begin{align} \label{d-set_quasicontinuity2}
  W:= \bigcup_{j \in \N} \Big \{ \big(B(r_j) \setminus B(r_{j+1})\big) \cap W_j
  \Big \}
  \end{align}
  has vanishing Lebesgue density at $\y$, i.e.\ $r^{-d}|B(r) \cap W|$ vanishes
  as $r$ tends to $0$: Indeed, if $r \in {]r_{l+1}, r_l]}$, then
  \begin{align*}
  |B(r) \cap W| 
  &\leq \Big| \big(B(r) \cap W_l\big) \cup \bigcup_{j \geq l+1} 
  \big(B(r_j) \cap W_j \big)\Big| \\
  &\leq 2^{-l}|B(r)| + \sum_{j \geq l+1} 2^{-j}|B(r_j)| 
  \leq 2^{-l+1} |B(r)|.
  \end{align*}
  Now, \eqref{e-asymp} allows to conclude
  \begin{align*}
  \liminf_{r \to 0} \frac {|B(r) \cap 
  \Lambda \cap (\R^d \setminus W))|}{r^d} > 0.
  \end{align*}
  Since $u$ is an extension of $v \in C^\infty_E(\Lambda)$ and $\y$ is an
  element of $E$ it holds $\mathfrak{D}^\beta \u = 0$ a.e.\ on $B(r) \cap
  \Lambda$ with respect to the $d$-dimensional Lebesgue measure if $r>0$ is
  small enough. The previous inequality gives $|B(r) \cap \Lambda \cap (\R^d
  \setminus W))| > 0$ if $r>0$ is small enough. In particular, there exists a
  sequence $\{\x_j\}_j$ in $\R^d \setminus W$ approximating $\y$ such that
  $\mathfrak{D}^\beta \u(\x_j) =  0$ for all $j \in \N$. Now, the upshot is that
  the restriction of  $\mathfrak{D}^\beta \u$ to $\R^d \setminus W$ is
  continuous at $\y$ since if $\x  \in \R^d \setminus W$ satisfies $|\x - \y|
  \leq r_j$ then by construction  $|\mathfrak{D}^\beta \u(\x) -
  \mathfrak{D}^\beta \u(\y)| \leq 1/j$. Hence,  $\mathfrak{D}^\beta \u(\y) = 0$
  and the proof is complete.

\item If $E$ is an $l$-set for some $l \in {]d-p,d]}$, then we can appeal to
  Theorem~\ref{t-coincid-mitrea} rather than Theorem~\ref{t-coincid} and the
  same argument as in (i) applies.

\item By assumption $u \in W_E^{k,q}(\R^d)$, where $q > d$. By Sobolev
  embeddings each distributional derivative $D^\beta u$, $|\beta| \leq
  k-1$, has a continuous representative $\mathfrak{D}^\alpha \u$. As each $\y
  \in E \subset \partial \Lambda$ is an accumulation point of $\Lambda \setminus
  E$ and since $D^\alpha u = D^\alpha v$ holds almost everywhere on $\Lambda$,
  the representative $\mathfrak{D}^\alpha \u$ must vanish everywhere on $E$ and
  Theorem~\ref{t-coincid} yields $u \in W_E^{k,p}(\R^d)$ as required.
  \qedhere
\end{enumerate}
\end{proof}

\begin{remark} \label{r-trace}
If $\Lambda$ is a $d$-set and $E$ a $(d-1)$-set, then Lemma~\ref{l-trace} is
proved in \cite[Sec.~VIII.1]{jons}.
\end{remark}

We can now state and prove the remarkable result that every Sobolev extension
operator that is constructed by localization techniques as
in Proposition~\ref{p-extension} preserves the Dirichlet condition.

\begin{theorem}\label{t-extension always preserves trace}
Let $k \in \N$ and $p \in [1,\infty[$. Let $\Lambda$ be a bounded domain and
let $E$ be a closed part of its boundary. Assume that for every $\x \in
\overline{\partial \Lambda \setminus E}$ there is an open neighborhood $U_\x$ of
$ \x$ such that $\Lambda \cap U_\x$ is a $W^{k,p}$-extension domain. Then
there exists a continuous extension operator
 \begin{align*}
  \mathfrak{E}_{k,p}: W^{k,p}_E(\Lambda) \to W_E^{k,p}(\R^d).
 \end{align*}
\end{theorem}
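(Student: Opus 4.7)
The plan is to combine Proposition~\ref{p-extension} with Lemma~\ref{l-trace}. Concretely, I would invoke the ``moreover'' clause of Proposition~\ref{p-extension}, which reduces the assertion to showing that every local extension operator $\mathfrak{E}_\x : W^{k,p}(\Lambda \cap U_\x) \to W^{k,p}(\R^d)$ furnished by the hypotheses actually maps the subspace $W^{k,p}_{E_\x}(\Lambda \cap U_\x)$ into $W^{k,p}_{E_\x}(\R^d)$, where $E_\x := \overline{E \cap U_\x}$.

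To establish this local trace preservation, I would apply Lemma~\ref{l-trace}(i) to $\mathfrak{E}_\x$ with underlying domain $\Lambda \cap U_\x$ and closed boundary set $E_\x$. The only point to verify is the asymptotically nonvanishing relative volume condition \eqref{e-asymp} at $(k,p)$-quasievery $\y \in E_\x$. Here one exploits the fact that $W^{k,p}$-extension domains automatically satisfy the \emph{measure density property}: by the classical theorem of Hajlasz, Koskela and Tuominen one has
\[
 |B(\y,r) \cap (\Lambda \cap U_\x)| \geq c\, r^d
\]
for every $\y \in \overline{\Lambda \cap U_\x}$ and every sufficiently small $r > 0$. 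In particular \eqref{e-asymp} holds pointwise on $E_\x \subset \partial(\Lambda \cap U_\x)$, which is far more than what Lemma~\ref{l-trace}(i) requires.

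With this in hand the proof assembles itself: Lemma~\ref{l-trace}(i) upgrades each local operator $\mathfrak{E}_\x$ to a trace-preserving one, and the ``moreover'' part of Proposition~\ref{p-extension} then globalizes this to a continuous extension operator $\mathfrak{E}_{k,p} : W^{k,p}_E(\Lambda) \to W^{k,p}_E(\R^d)$. The main obstacle, as I see it, is really the invocation of the measure density property of Sobolev extension domains; absent that ingredient one would be forced to inspect the partition-of-unity formula $\mathfrak{E}_{k,p}(\psi) = \varphi + \sum_j \zeta_j \varphi_j$ from Proposition~\ref{p-extension} and verify term by term that each $\zeta_j \varphi_j$ belongs to $W^{k,p}_E(\R^d)$, which ultimately leads back to the same issue of controlling $\varphi_j$ quasi-everywhere on $E \cap U_{\x_j}$.
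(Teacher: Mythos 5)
Your proposal is correct and coincides with the paper's own argument: the paper likewise reduces to the local trace preservation via the ``moreover'' clause of Proposition~\ref{p-extension}, then applies Lemma~\ref{l-trace}(i), verifying \eqref{e-asymp} through the Haj\l asz--Koskela--Tuominen result (Proposition~\ref{p-extension implies dset}) that a $W^{k,p}$-extension domain is a $d$-set and hence has nonvanishing relative volume at every boundary point.
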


For the proof we recall the following result of Hai\l{}asz, Koskela
and Tuominen.

\begin{proposition}[{\cite[Thm.\ 2]{hajla}}]\label{p-extension implies dset}
If a domain $\Lambda \subset \R^d$ is a $W^{k,p}$-extension domain for some $k
\in \N$ and $p \in {[1,\infty[}$, then it is a $d$-set.
\end{proposition}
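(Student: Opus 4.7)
The upper bound $|\Lambda \cap B(\x, r)| \le \omega_d r^d$ is immediate, so the content is the lower bound $|\Lambda \cap B(\x, r)| \ge c_0 r^d$ for $\x \in \overline\Lambda$ and $r \in {]0,1]}$. Points $\x$ in the interior of $\Lambda$ are trivial for sufficiently small $r$, so the real obstacle is at $\x \in \partial \Lambda$.

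The plan is to probe the extension operator $\mathfrak E = \mathfrak{E}_{k,p}$ with scaled bumps. Fix $\x \in \partial \Lambda$ and $r \in {]0,1]}$, and choose $\psi \in C_c^\infty(B(\x, 2r))$ with $\psi \equiv 1$ on $B(\x, r)$, $0 \le \psi \le 1$, and $|\nabla^j \psi| \le C r^{-j}$ for $0 \le j \le k$. Writing $m := |\Lambda \cap B(\x, 2r)|$, the localisation of $\psi$ to $\Lambda$ yields
\begin{align*}
\|\psi|_\Lambda\|_{W^{k,p}(\Lambda)}^p \le C \sum_{j=0}^k r^{-jp}\, m \le C' r^{-kp}\, m,
\end{align*}
the last inequality using $r \le 1$. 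Applying the extension, $v := \mathfrak E(\psi|_\Lambda)$ lies in $W^{k,p}(\R^d)$ with $\|v\|_{W^{k,p}(\R^d)} \le C \|\mathfrak E\|\, r^{-k}\, m^{1/p}$, and $v$ still coincides with $\psi$ a.e.\ on $\Lambda$; in particular $v \equiv 1$ on $\Lambda \cap B(\x, r)$.

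The conversion of this norm bound into the density estimate $m \ge c_0 r^d$ is the heart of the matter. When $kp > d$ it is clean: Sobolev embedding gives a continuous representative $\tilde v$ of $v$, which equals $\psi$ on the interior of $\Lambda$ and, by approaching $\x$ through $\Lambda$, satisfies $\tilde v(\x) = \psi(\x) = 1$; a scale-correct localised Sobolev inequality on $B(\x, 2r)$ then produces $1 = \tilde v(\x) \le C \|\mathfrak E\|\, r^{-d/p}\, m^{1/p}$, hence $m \ge c_0 r^d$. In the general range one replaces pointwise control by the Poincar\'e inequality on $B(\x, 2r)$, giving $\|v - v_{B(\x, 2r)}\|_{L^p(B(\x, 2r))}^p \le C m$. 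Since $v \equiv 1$ on $\Lambda \cap B(\x, r)$, this forces $|1 - v_{B(\x, 2r)}|^p\, |\Lambda \cap B(\x, r)| \le Cm$, and one complements it with a bound on $|v_{B(\x, 2r)}|$ itself coming from the global $L^p$-norm of $v$.

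The hard part will be closing this loop uniformly when $kp \le d$: the single-scale comparison produces only a recursive relation between $m(r) := |\Lambda \cap B(\x, r)|$ and $m(2r)$, not directly $m(r) \gtrsim r^d$. Two routes resolve it. One is to iterate the recursion over dyadic scales $r, 2r, 4r, \ldots$ and use the trivial upper bound $m(\diam \Lambda) \le |\Lambda|$ to preclude that the density $m(r)/r^d$ decay to zero. The more conceptual route, essentially that of Haj\l{}asz--Koskela--Tuominen, is to reinterpret the computation above as a \emph{capacity} statement: the variational $(k,p)$-capacity of $B(\x, r) \setminus \Lambda$ relative to $B(\x, 2r)$ is bounded above by $C\|\mathfrak E\|^p r^{d-kp}$, because any admissible function on $\Lambda$ yields, via $\mathfrak E$, an admissible function on $\R^d$. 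The classical capacity-measure comparison on balls then forces $|\Lambda \cap B(\x, r)| \ge c_0 r^d$ with $c_0$ depending only on $d$, $p$, $k$ and $\|\mathfrak E\|$, completing the proof.
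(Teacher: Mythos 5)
The paper does not prove this proposition at all --- it is quoted from Haj\l{}asz--Koskela--Tuominen \cite[Thm.~2]{hajla} --- so your attempt must stand on its own, and as written it does not close. Your starting point (testing $\mathfrak{E}$ against scaled cutoffs) is the right one, but you underestimate a scaling loss that undermines every subsequent step: the extension operator is bounded only in the \emph{inhomogeneous} norm, so all you control is $\|v\|_{W^{k,p}(\R^d)} \le C\|\mathfrak E\| r^{-k} m^{1/p}$, and in particular $\|v\|_{L^p(B(\x,2r))}$ may be as large as $r^{-k}m^{1/p}$. Hence the ``clean'' case $kp>d$ is not clean: the correctly scaled embedding on $B(\x,2r)$ reads $\|v\|_{L^\infty(B)} \le C\sum_{j=0}^k r^{j-d/p}\|\nabla^j v\|_{L^p(B)}$, the $j=0$ term contributes $r^{-d/p}\cdot r^{-k}m^{1/p}$, and you obtain only $m \gtrsim r^{d+kp}$, not $m \gtrsim r^d$; your claimed bound $1 \le C r^{-d/p}m^{1/p}$ would require $\|v\|_{L^p(B)} \lesssim r^{k}\|v\|_{W^{k,p}}$, for which there is no reason. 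The same loss makes the plain $L^p$--$L^p$ Poincar\'e step in the general case yield either the vacuous alternative $m(2r)\gtrsim m(r)$ or again $m(2r)\gtrsim r^{d+kp}$. The missing ingredient is the Sobolev--Poincar\'e \emph{gain of exponent}: testing against the embedding into $L^{p^*}$ with $p^*=dp/(d-kp)$ gives the superlinear recursion $m(r)^{(d-kp)/d}\le C r^{-kp} m(2r)$, and only the strict inequality $p^*>p$ makes the subsequent iteration summable.

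Both proposed routes for closing the loop are also flawed. Writing $f(\rho)=m(\rho)\rho^{-d}$, the correct recursion is $f(2\rho)\ge c\,f(\rho)^{\theta}$ with $\theta=(d-kp)/d\in(0,1)$; this propagates \emph{lower} bounds from small scales to large ones and \emph{upper} bounds in the reverse direction, so anchoring the dyadic iteration at the top with $m(\diam\Lambda)\le|\Lambda|$ yields only an upper bound on $f(r)$ and precludes nothing. The usable anchor sits at the bottom: for an interior point $\x\in\Lambda$ one has $f(\rho)\to1$ as $\rho\to0$, the outward iteration then gives $f(\sigma)\ge \tfrac12 c^{1/(1-\theta)}$ uniformly in $\sigma\le1$, and one passes to $\overline\Lambda$ by continuity of $\x\mapsto|B(\x,r)\cap\Lambda|$ (equivalently, one iterates inward over shrinking annuli between $r/2$ and $r$ and contradicts $m(r/2)>0$, which is essentially what \cite{hajla} does). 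The capacity route misidentifies the condenser: the truncated and cut-off extension $v$ is admissible for the plate $\overline{\Lambda\cap B(\x,r)}$, not for $B(\x,r)\setminus\Lambda$, so what the construction yields is an upper capacity bound for the former set --- which is the same recursion with the same $m(r)$ versus $m(2r)$ mismatch --- while an upper capacity bound for $B(\x,r)\setminus\Lambda$ combined with the capacity--measure comparison could in any case only bound $|B(\x,r)\setminus\Lambda|$ above by $Cr^d$, which is vacuous. Finally, the range $kp\ge d$, where the $L^{p^*}$ recursion is unavailable, requires a separate reduction that your sketch does not supply.
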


\begin{proof}[Proof of Theorem~\ref{t-extension always preserves trace}]
According to Proposition~\ref{p-extension} it suffices to check that each local
extension operator $\mathfrak{E}_\x$ maps $W_{E_\x}^{k,p}(\Lambda \cap U_\x)$
into $W_{E_\x}^{k,p}(\R^d)$, where $E_\x := \overline{E \cap U_\x}
\subset \partial(\Lambda \cap U_\x)$. Owing to Proposition~\ref{p-extension
implies dset} the $W^{k,p}$-extension domain $\Lambda \cap U_\x$
is a $d$-set and as such satisfies \eqref{e-asymp} around every of its boundary
points. So, Lemma~\ref{l-trace}.(i) yields the claim.
\end{proof}

\begin{remark}\label{r-extension always preserves trace}
The extension operator in Theorem~\ref{t-extension
always preserves trace} is the same as in Proposition~\ref{p-extension}. Hence,
the former result asserts that every Sobolev extension operator built by
the common gluing-together of local extension operators automatically preserves
the Dirichlet condition on $E$ under the mere assumption that this set is
closed. Moreover, all uniformity properties as in Remark~\ref{r-unif} remain
valid. 
\end{remark}

\subsection{Geometric conditions} \label{Subsec: geometric conditions}
In this subsection we finally review common geometric conditions on the boundary
part $\overline {\partial \Lambda \setminus E}$ such that the local sets
$\Lambda \cap U_{\x}$ really admit the Sobolev extension property required in
Proposition~\ref{p-extension}. 

A first condition, completely sufficient for the treatment of most real world
problems, is the following Lipschitz condition.

\begin{definition}\label{d-Lipschitzdomain}
A bounded domain $\Lambda \subset \R^d$ is called \emph{bounded Lipschitz
domain} if for each $\x \in \partial \Lambda$ there is an open neighborhood
$U_\x$ of $\x$ and a bi-Lipschitz mapping $\phi_\x$ from $U_\x$ onto a cube,
such that $\phi_\x(\Lambda \cap U_\x)$ is the (lower) half cube and $\partial
\Lambda \cap U_\x$ is mapped onto the top surface of this half cube.
\end{definition}

It can be proved by elementary means that bounded Lipschitz domains are
$W^{1,p}$-extension domains for every $p \in [1,\infty[$, cf.\ e.g.\
\cite{Giusti} for the case $p=2$. In fact, already the following
$(\eps,\delta)$-condition of Jones \cite{jones} assures the existence of a
universal Sobolev extension operator. 

\begin{definition} \label{d-Jones}
 Let $\Lambda \subset \R^d$ be a domain and $\varepsilon, \delta >0$. Assume
 that any two points $\x, \mathrm y \in \Lambda$, with distance not
 larger than $\delta$, can be connected within $\Lambda$ by a rectifiable arc
 $\gamma$ with length $l(\gamma)$, such that the following two conditions are
 satisfied for all points $\mathrm z$ from the curve $\gamma$:
 \begin{align*}
 l(\gamma) \le \frac{1}{\varepsilon } \|\x - \mathrm y\|, \quad
	\text{and} \quad
	\frac{\|\x - \mathrm z\| \|\mathrm y -\mathrm z\|}
		{\|\x -\mathrm y\|}
	\le \frac{1}{\varepsilon} \mathrm{dist}(\mathrm z, \Lambda^c).
 \end{align*}
 Then $\Lambda$ is called \emph{$(\varepsilon, \delta)$-domain}.
\end{definition}

\begin{theorem}[Rogers] \label{t-Rogers}
 Each $(\varepsilon,\delta)$-domain is a universal Sobolev extension domain.
\end{theorem}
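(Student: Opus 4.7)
The plan is to build a single extension operator by Whitney-type reflection, following the classical construction of Jones with the degree-uniform refinement due to Rogers. Fix an $(\varepsilon,\delta)$-domain $\Lambda \subset \R^d$. The basic idea is: decompose $\R^d \setminus \overline{\Lambda}$ into Whitney cubes, reflect each small enough exterior cube $S$ to a nearby interior Whitney cube $S^*$ of comparable sidelength, approximate the Sobolev function by a smoothed version on $S^*$, and glue these local pieces together via a Whitney partition of unity. Functions are extended by zero on the (bounded) piece of the exterior that consists of Whitney cubes too large for reflection, after cutting off by a fixed smooth function; this "far" part gives no trouble because one only needs the extension to have controlled support on a bounded neighborhood of $\overline{\Lambda}$.

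The heart of the matter is the reflection. For each Whitney cube $S \subset \R^d \setminus \overline{\Lambda}$ with $\mathrm{diam}(S) \le c\delta$, I would select $S^* \subset \Lambda$ with $\mathrm{diam}(S^*) \approx \mathrm{diam}(S)$ and $\mathrm{dist}(S,S^*) \le C \,\mathrm{diam}(S)$, using the twisted-cone nature of an $(\varepsilon,\delta)$-domain. The $(\varepsilon,\delta)$-property also yields, for any two neighboring exterior cubes $S_1, S_2$, a "Whitney chain" $S_1 = Q_0, Q_1, \dots, Q_m = S_2$ of interior cubes whose length $m$ is bounded \emph{independently} of $S_1, S_2$, and such that consecutive cubes in the chain are comparable and touch or nearly touch. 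This chaining lemma is the geometric backbone and the place where the hypothesis is used in full force.

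For the definition of the extension on $S$ I would avoid the $(k-1)$-degree Taylor polynomials of the original Jones construction (whose normalization depends on $k$) and instead use a single kernel-based smoothing $P_{S^*} f$ — for instance the average of $f$ against a fixed compactly supported bump scaled to $S^*$, or more refinedly a single choice of polynomial projection that can be reinterpreted across orders. Set $\mathfrak E f(x) := \sum_S \varphi_S(x)\, P_{S^*} f(x)$ on $\R^d \setminus \overline{\Lambda}$, where $\{\varphi_S\}$ is a smooth partition of unity subordinate to a slight enlargement of the exterior Whitney cubes, with $|D^\alpha \varphi_S| \le c_\alpha \mathrm{diam}(S)^{-|\alpha|}$. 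Setting $\mathfrak E f = f$ on $\Lambda$ and multiplying by a global cutoff handles the complement. The continuity at $\partial \Lambda$ follows because $P_{S^*}f \to f$ in a pointwise Lebesgue-differentiation sense as $\mathrm{diam}(S) \to 0$.

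The main obstacle, and really the content of the theorem beyond Jones, is proving boundedness $W^{k,p}(\Lambda) \to W^{k,p}(\R^d)$ with estimates \emph{simultaneously valid for all $k \in \N$ and $p \in [1,\infty[$ via one and the same operator}. The derivatives $D^\alpha \mathfrak E f$, $|\alpha| \le k$, on an exterior cube $S$ expand by Leibniz into sums involving $D^\beta \varphi_S$ (producing negative powers of $\mathrm{diam}(S)$) times quantities $D^\gamma P_{S^*} f$; on each neighbor pair $(S_1,S_2)$ the bad negative powers must be absorbed by telescoping $P_{S_1^*}f - P_{S_2^*}f$ along the Whitney chain and invoking Poincaré inequalities on the intermediate interior cubes. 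Summing $L^p$-norms cube-by-cube and using that the chain length is uniformly bounded yields a bound by $\|f\|_{W^{k,p}(\Lambda)}$. Ensuring all the constants in this estimate depend only on $\varepsilon, \delta, d$ (and not on $k,p$) is exactly Rogers' contribution, and this is where the construction has to be set up with a degree-free smoothing operator from the outset rather than optimized per order.
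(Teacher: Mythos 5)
This statement is not proved in the paper at all: it is quoted as an external result of Rogers \cite{rogers} (see Remark~\ref{r-uniform}), so there is no internal argument to compare yours against. Judged on its own terms, your sketch correctly reproduces the architecture of the Jones--Rogers construction (Whitney decomposition of the exterior, reflection of small cubes $S$ to interior cubes $S^*$ of comparable size, uniformly bounded Whitney chains furnished by the $(\varepsilon,\delta)$-condition, gluing with a partition of unity, telescoping along chains plus Poincar\'e inequalities). But the one step that actually constitutes the theorem is missing.

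The gap is the construction of the smoothing $P_{S^*}$. Your concrete proposal --- ``the average of $f$ against a fixed compactly supported bump scaled to $S^*$'' --- produces a \emph{constant} on each reflected cube, i.e.\ a zeroth-order polynomial fit. For the $W^{k,p}$-estimate one needs $\|f - P_{S^*}f\|_{L^p}$ (and the differences $P_{S_1^*}f - P_{S_2^*}f$ along chains) to be of order $\operatorname{diam}(S)^{k}\,\|D^k f\|_{L^p}$, because the Leibniz terms $D^\beta\varphi_S$ contribute a factor $\operatorname{diam}(S)^{-|\beta|}$ with $|\beta|$ as large as $k$. A constant approximation only gains one power of $\operatorname{diam}(S)$, so your telescoping argument loses $\operatorname{diam}(S)^{-(k-1)}$ and the sum over Whitney cubes diverges for every $k\ge 2$. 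This is exactly why Jones needed the degree-$(k-1)$ Taylor-type projections, which make the operator depend on $k$; removing that dependence while retaining $k$-th order approximation for \emph{all} $k$ simultaneously is the entire content of Rogers' paper (his smoothing is built so that differentiation essentially commutes with it, so that $D^\alpha$ of the extension is controlled by averages of $D^\alpha f$ of the \emph{same} order rather than by a polynomial error term). Your fallback phrase, ``a single choice of polynomial projection that can be reinterpreted across orders,'' names the required object but does not construct it, so the degree-independence --- the only thing distinguishing this theorem from Jones' --- is asserted rather than proved. A smaller but real secondary gap: matching across $\partial\Lambda$ in the distributional sense (no singular part of $D^\alpha \mathfrak{E}f$ on the boundary) needs an argument via the chain estimates, not just pointwise Lebesgue-point convergence of $P_{S^*}f$.
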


\begin{remark} \label{r-uniform}
\begin{enumerate}
 \item Theorem~\ref{t-Rogers} is due to Rogers \cite{rogers} and generalizes the
celebrated result of Jones \cite{jones}. \emph{Bounded}
$(\varepsilon,\delta)$-domains are known  to be \emph{uniform domains}, see
\cite[Ch.~4.2]{Vai} and also \cite{jones, martio, martiosarv, martin} for
further information. In particular, every bounded Lipschitz
domain is an $(\eps,\delta)$-domain, see e.g.\
\cite[Rem.~5.11]{Darmstadt_KatoMixed} for a sketch of proof.
 \item Although the uniformity property is not necessary for a domain to be a
Sobolev extension domain \cite{Yang} it seems presently to be the broadest known
class of domains for which this extension property holds -- at least if one aims
at all $p \in {]1,\infty[}$. For example Koch's snowflake is an
$(\varepsilon,\delta)$-domain \cite{jones}.
\end{enumerate}
\end{remark}

Plugging in Rogers extension operator into Theorem~\ref{t-extension always
preserves trace} lets us re-discover \cite[Thm.~1.3]{mitrea}
in case of bounded domains and $p$ strictly between $1$ and $\infty$. We even
obtain a universal extension operator that simultaneously acts on all
$W_E^{k,p}$-spaces and at the same time our argument reveals that the
preservation of the trace is irrespective of the specific structure of
Jones' or Roger's extension operators. 

We believe that this sheds some more light also on \cite[Thm.~1.3]{mitrea}
though -- of course -- our argument cannot disclose the fundamental assertions
on the support of the extended functions obtained in \cite{mitrea} by a careful
analysis of Jones' extension operator. We summarize our observations in the
following theorem.

\begin{theorem} \label{t-rediscovering mitrea}
 Let $\Lambda$ be a bounded domain and let $E$ be a closed part of its boundary.
 Assume that for every $\x \in \overline{\partial \Lambda \setminus E}$ there
 is an open neighborhood $U_\x$ of $ \x$ such that $\Lambda \cap U_\mathrm
 x$ is a bounded Lipschitz or, more generally, an $(\eps, \delta)$-domain for
 some values $\eps, \delta > 0$. Then there exists a universal operator
 $\mathfrak{E}$ that restricts to a bounded extension operator
 $W_E^{k,p}(\Lambda) \to W_E^{k,p}(\R^d)$ for each $k \in \N$ and each $p \in
 {]1,\infty[}$.
\end{theorem}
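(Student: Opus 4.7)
The plan is to assemble the three technical pillars developed earlier in this section: the local-to-global construction from Proposition~\ref{p-extension}, the automatic preservation of the Dirichlet trace in Theorem~\ref{t-extension always preserves trace}, and Rogers' universal extension theorem (Theorem~\ref{t-Rogers}). Since by Remark~\ref{r-uniform}.(i) every bounded Lipschitz domain is an $(\eps,\delta)$-domain, I may assume from the outset that for each $\x \in \overline{\partial \Lambda \setminus E}$ the local piece $\Lambda \cap U_\x$ is an $(\eps_\x,\delta_\x)$-domain for some parameters. Theorem~\ref{t-Rogers} then equips each such $\Lambda \cap U_\x$ with a \emph{single} extension operator $\mathfrak E_\x$ that is bounded from $W^{k,p}(\Lambda \cap U_\x)$ into $W^{k,p}(\R^d)$ simultaneously for every $k \in \N$ and every $p \in [1,\infty[$.

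Next, I would carry out the gluing construction from the proof of Proposition~\ref{p-extension} with these Rogers operators as the local building blocks. Extract a finite subcover $U_{\x_1},\dots,U_{\x_n}$ of the compact set $\overline{\partial \Lambda \setminus E}$, fix a subordinate partition of unity $\eta,\eta_1,\dots,\eta_n$ together with cut-offs $\zeta_j \in C_0^\infty(U_{\x_j})$ with $\zeta_j \equiv 1$ on $\supp(\eta_j)$, and define
\begin{align*}
\mathfrak E(\psi) := \widetilde{\eta\psi} + \sum_{j=1}^n \zeta_j\,\mathfrak E_{\x_j}(\eta_j \psi|_{\Lambda \cap U_{\x_j}}), \qquad \psi \in C_E^\infty(\Lambda),
\end{align*}
where $\widetilde{\phantom{a}}$ denotes the trivial extension by zero. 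The crucial point is that all combinatorial data in this formula (the cover, the partition of unity, the $\zeta_j$) are \emph{independent} of $k$ and $p$, and so is $\mathfrak E$ itself as a map on $C_E^\infty(\Lambda)$; the estimates in the proof of Proposition~\ref{p-extension} then show that for every $k$ and every $p \in [1,\infty[$ this single map is bounded with respect to the $W^{k,p}$-norms. By density it extends uniquely to a bounded linear operator $\mathfrak E : W_E^{k,p}(\Lambda) \to W^{k,p}(\R^d)$ for all admissible $k,p$ at once, as already indicated in Remark~\ref{r-unif}.

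Finally, to upgrade the target from $W^{k,p}(\R^d)$ to $W_E^{k,p}(\R^d)$ when $p \in {]1,\infty[}$, I invoke Theorem~\ref{t-extension always preserves trace}, whose hypotheses are identical to those of the present theorem; this yields that $\mathfrak E$ automatically respects the Dirichlet condition on $E$, completing the proof. The main point is not a hard estimate but the bookkeeping observation that the gluing formula defines one operator-valued object whose boundedness is read off on each Sobolev scale separately, so that Rogers' universality and the trace-preservation result combine to deliver a single universal extension operator $\mathfrak E$ as required.
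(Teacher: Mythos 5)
Your proposal is correct and follows essentially the same route as the paper, which obtains Theorem~\ref{t-rediscovering mitrea} precisely by feeding Rogers' universal local extension operators (Theorem~\ref{t-Rogers}) into the gluing construction of Proposition~\ref{p-extension}, noting via Remark~\ref{r-unif} that the resulting operator is independent of $k$ and $p$, and then invoking Theorem~\ref{t-extension always preserves trace} (whose proof rests on Proposition~\ref{p-extension implies dset} and Lemma~\ref{l-trace}.(i)) for the preservation of the trace on $E$. Your bookkeeping observation that the cover, partition of unity and cut-offs are chosen once and for all is exactly the point the paper makes in Remarks~\ref{r-unif} and \ref{r-extension always preserves trace}.
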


\section{Poincar\'e's inequality}
\label{sec-poincare}
\noindent In this section we will discuss sufficient conditions for Poincar\'e's
inequality, thereby unwinding Assumption~\ref{t-hardy:ii} of
Theorem~\ref{t-hardy}. Our aim is not greatest generality as e.g.\
in \cite{mazsob} for functions defined on the whole of $\R^d$, but to include
the aspect that our functions are only defined on a domain. Secondly, our
interest is to give very general, but in some sense \emph{geometric} conditions,
which may be checked more or less `by appearance' -- at least for problems
arising from applications. 

The next proposition gives a condition that assures that a closed
subspace of $W^{1,p}$ may be equivalently normed by the $L^p$-norm of the
gradient of the corresponding functions only. We believe
that this might also be of independent interest, compare also
\cite[Ch.~4]{ziemer}. Throughout $\mathds 1$ denotes the function that is
identically one.

\begin{proposition} \label{t-constant}
 Let $\Lambda \subset \R^d$ be a bounded domain and suppose $p \in
 {]1,\infty[}$. Assume that $X$ is a closed subspace of $W^{1,p}(\Lambda)$ that
 does not contain $\mathds 1$ and for which the restriction of the canonical
 embedding $W^{1,p}(\Lambda) \hookrightarrow L^p(\Lambda)$ to $X$ is
 compact. Then $X$ may be equivalently normed by $v \mapsto \bigl( \int_\Lambda
 |\nabla v|^p \mathrm{d} \x \bigr)^{1/p}$.
\end{proposition}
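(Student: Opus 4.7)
My plan is a standard compactness-and-contradiction argument, of the same flavor as the classical proof of Poincaré's inequality on bounded domains modulo constants.

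First I would establish that the candidate seminorm $N(v) := \bigl(\int_\Lambda |\nabla v|^p \, \d \x\bigr)^{1/p}$ is actually a norm on $X$. The estimate $N(v) \le \|v\|_{W^{1,p}(\Lambda)}$ is obvious, so it suffices to check definiteness. If $v \in X$ with $N(v) = 0$, then $\nabla v \equiv 0$ distributionally on the connected open set $\Lambda$, whence $v$ is a.e.\ equal to a constant $c \in \R$; thus $v = c \mathds 1$ on $\Lambda$. Since $\mathds 1 \notin X$ and $X$ is a subspace, necessarily $c = 0$, i.e.\ $v = 0$.

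Next I would prove the nontrivial estimate $\|v\|_{L^p(\Lambda)} \le C\, N(v)$ for all $v \in X$ by contradiction. Suppose no such $C$ exists. Then there is a sequence $\{v_n\}_n \subset X$ with $\|v_n\|_{L^p(\Lambda)} = 1$ and $N(v_n) \to 0$. In particular $\{v_n\}_n$ is bounded in $W^{1,p}(\Lambda)$, so by the assumed compactness of the embedding $X \hookrightarrow L^p(\Lambda)$ we may pass to a subsequence (not relabelled) converging in $L^p(\Lambda)$ to some $v$. Since additionally $\nabla v_n \to 0$ in $L^p(\Lambda;\R^d)$, the sequence $\{v_n\}_n$ is Cauchy in $W^{1,p}(\Lambda)$; its $W^{1,p}$-limit must coincide with $v$, so $v \in W^{1,p}(\Lambda)$ with $\nabla v = 0$, and because $X$ is closed in $W^{1,p}(\Lambda)$ we have $v \in X$. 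Connectedness of $\Lambda$ forces $v$ to be (a.e.) constant, and $\|v\|_{L^p(\Lambda)} = \lim_n \|v_n\|_{L^p(\Lambda)} = 1$ shows this constant is nonzero. Hence $\mathds 1 \in X$, contradicting our hypothesis. Combining this estimate with the trivial one proves the equivalence of norms.

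There is no genuine obstacle here: the only subtlety is to ensure that the $L^p$-limit $v$ actually lies in $X$, and this is exactly where closedness of $X$ in $W^{1,p}(\Lambda)$ together with the Cauchy argument in $W^{1,p}$ is used, rather than just weak closedness in $L^p$. The assumptions in the statement are used once each: compactness delivers a convergent subsequence, closedness places the limit in $X$, connectedness of $\Lambda$ (built into the notion of a domain) identifies the limit as a constant multiple of $\mathds 1$, and $\mathds 1 \notin X$ produces the contradiction.
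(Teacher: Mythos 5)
Your argument is correct and is essentially the paper's own compactness--contradiction proof: normalize so that $\|v_n\|_{L^p(\Lambda)}=1$ while $\|\nabla v_n\|_{L^p(\Lambda)}\to 0$, extract a subsequence convergent in $L^p(\Lambda)$, identify the limit as a constant, and contradict $\mathds 1\notin X$. The only (harmless) difference is bookkeeping: the paper first passes to a weak limit in the reflexive space $X$, shows it is $0$, and then uses compactness to contradict the normalization, whereas you apply compactness immediately and upgrade to strong $W^{1,p}$-convergence via the Cauchy property, which lets you invoke closedness of $X$ directly; both routes use exactly the same hypotheses.
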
 

\begin{proof}
 First recall that both $X$ and $L^p(\Lambda)$ are reflexive. In order to
 prove the proposition, assume to the contrary that there exists a sequence
 $\{v_k\}_k$ from $X$ such that
 \begin{align*}
  \frac{1}{k} \|v_k\|_{L^p(\Lambda)} \ge \|\nabla v_k\|_{L^p(\Lambda)}.
 \end{align*}
 After normalization we may assume $\|v_k\|_{L^p(\Lambda)} = 1$ for every $k
 \in \N$. Hence, $\{\nabla v_k\}_k$ converges to $0$ strongly in $L^p(\Lambda)$.
 On the other hand, $\{v_k\}_k$ is a bounded sequence in $X$ and hence contains
 a subsequence $\{v_{k_l}\}_l$ that converges weakly in $X$ to an element $v \in
 X$. Since the gradient operator $\nabla : X \to L^p(\Lambda)$ is
 continuous, $\{\nabla v_{k_l}\}_l$ converges to $\nabla v$ weakly in 
 $L^p(\Lambda)$. As the same sequence converges to $0$ strongly in
 $L^p(\Lambda)$, the function $\nabla v$ must be zero and hence $v$ is constant.
 But by assumption $X$ does not contain constant functions except for $v = 0$.
 So, $\{v_{k_l}\}_l$  tends to $0$ weakly in $X$. Owing to the compactness of
 the embedding $X \hookrightarrow L^p(\Lambda)$, a subsequence of
 $\{v_{k_l}\}_l$ tends to $0$ strongly in $L^p(\Lambda)$. This contradicts
 the normalization condition $\|v_{k_l}\|_{L^p(\Lambda)} = 1$.
\end{proof}

\begin{remark} \label{r-bederfuellt}
 It is clear that in case $X = W^{1,p}_D(\Omega)$ the embedding $X
 \hookrightarrow L^p(\Omega)$ is compact, if there exists a continuous
 extension operator $\mathfrak E : W^{1,p}_D(\Omega) \to W^{1,p}(\R^d)$. Hence,
 the compactness of this embedding is no additional requirement in view of
 Theorem~\ref{t-hardy}.
\end{remark}

In the case that $E$ is $l$-thick, the following lemma presents two
conditions that are particularly easy to check and entail $\mathds 1 \notin
W^{1,p}_E(\Lambda)$. Loosely speaking, some knowledge on the common frontier of
$E$ and $\partial \Lambda \setminus E$ is required: Either not every point of
$E$ should lie thereon or $\partial \Lambda$ must not be too wild around this
frontier. 

\begin{lemma} \label{l-constantzero}
Let $p \in {]1,\infty[}$, let $\Lambda$ be a bounded domain and let $E \subset
\partial \Lambda$ be $l$-thick for some $l \in {]d-p,d]}$. Both of the
following conditions assure $\mathds 1 \notin W_E^{1,p}(\Lambda)$.
 \begin{enumerate}
  \item \label{l-constantzero:i} The set $E$ admits at least one relatively
  inner point $\x$. Here, `relatively inner' is with  respect to $\partial
  \Lambda$ as ambient topological space.
  \item \label{l-constantzero:ii} For every $\x \in \overline{\partial \Lambda
\setminus E}$ there is an open neighborhood $U_\x$ of $ \x$ such that $\Lambda
\cap U_\mathrm x$ is a $W^{1,p}$-extension domain.
\end{enumerate}
\end{lemma}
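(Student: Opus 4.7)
My plan is a proof by contradiction in both cases, the common engine being the fact that $l$-thickness of $E$ with $l > d-p$ forces every intersection $E \cap B(\x,r)$ (for $\x \in E$ and $r$ small enough) to have strictly positive $(1,p)$-capacity. This follows from $\H_l(E \cap B(\x,r)) \ge \H^\infty_l(E \cap B(\x,r)) \ge \gamma r^l > 0$ combined with the contrapositive of the Comparison Theorem~\ref{t-comparison} applied with $\alpha = 1$.

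For case~\ref{l-constantzero:i}, let $\x \in E$ be a relatively inner point of $\partial \Lambda$ and pick $r_0 > 0$ so that $B := B(\x, 2r_0)$ satisfies $B \cap \partial \Lambda \subseteq E$. Assuming $\mathds{1} \in W^{1,p}_E(\Lambda)$, approximate $\mathds 1$ by $v_n \in C^\infty_E(\Lambda)$ and extend each $v_n$ by zero to obtain $\bar v_n \in C^\infty(B)$. This is legitimate because $v_n$ vanishes in an open neighborhood of the closed set $E$, which inside $B$ absorbs all of $\partial \Lambda$. In $W^{1,p}(B)$ the $\bar v_n$ converge to $f := \mathds 1_{\Lambda \cap B}$, and since $\nabla f \equiv 0$ on the connected set $B$ while $|\Lambda \cap B| > 0$, the function $f$ equals $1$ almost everywhere on $B$, hence its quasi-continuous representative $\tilde f$ equals $1$ $(1,p)$-quasieverywhere on $B$. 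On the other hand, a subsequence of the smooth $\bar v_n$ converges quasi-everywhere to $\tilde f$, and $\bar v_n$ vanishes pointwise on $E \cap B$ for every $n$, forcing $\tilde f \equiv 0$ quasi-everywhere on $E \cap B$. Hence $C_{1,p}(E \cap B) = 0$, contradicting the capacity lower bound above.

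For case~\ref{l-constantzero:ii} I split according to whether $E$ admits a relatively inner point in $\partial \Lambda$. If it does, case~\ref{l-constantzero:i} already gives the claim. Otherwise $E \subseteq \overline{\partial \Lambda \setminus E}$, so condition~\ref{l-constantzero:ii} is available at every point of $E$. Theorem~\ref{t-extension always preserves trace} then furnishes a continuous extension $\mathfrak E \colon W^{1,p}_E(\Lambda) \to W^{1,p}_E(\R^d)$. Assuming $\mathds 1 \in W^{1,p}_E(\Lambda)$, set $u := \mathfrak E(\mathds 1)$ and truncate to $\bar u := \min\{\max\{u, 0\}, 1\}$; composition with the Lipschitz map $t \mapsto \min\{\max\{t,0\},1\}$, which fixes $0$, preserves membership in $W^{1,p}_E(\R^d)$ via Theorem~\ref{t-coincid}, and $\bar u \ge \mathds 1_\Lambda$ holds pointwise. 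For $(1,p)$-quasievery $\y \in E$ the Lebesgue-point identity \eqref{e-pointwi} and the quasi-everywhere vanishing of $\bar u$ on $E$ yield
\[
0 = \lim_{r \to 0}\frac{1}{|B(\y,r)|}\int_{B(\y,r)} \bar u \,\d\x \ge \liminf_{r \to 0}\frac{|B(\y,r) \cap \Lambda|}{|B(\y,r)|}.
\]
However, for every such $\y$, condition~\ref{l-constantzero:ii} guarantees that $\Lambda \cap U_\y$ is a $W^{1,p}$-extension domain, hence a $d$-set by Proposition~\ref{p-extension implies dset}, so the density of $\Lambda$ at $\y$ stays bounded below by a positive constant. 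Thus every $\y \in E$ violates the q.e.\ vanishing above, whence $E$ must itself be a $(1,p)$-nullset---contradicting its positive $(1,p)$-capacity.

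The main obstacle is to rigorously carry out the quasi-continuity steps: identifying $\tilde f \equiv 1$ on $B$ in part~\ref{l-constantzero:i} via Lebesgue-point convergence of the smooth approximants along a subsequence, and verifying that truncation descends through the $W^{1,p}_E(\R^d)$-structure in part~\ref{l-constantzero:ii}. Both rely on the Hedberg--Wolff synthesis theorem recalled as Theorem~\ref{t-coincid}.
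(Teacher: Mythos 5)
Your proof is correct, and while it runs on the same engine as the paper's---forcing the limit function to have Lebesgue-point value $1$ at quasi-every point of a piece of $E$ of positive $(1,p)$-capacity, which contradicts Theorem~\ref{t-coincid} combined with $l$-thickness and the Comparison Theorem~\ref{t-comparison}---it is organized differently in both parts. In part~(i) the paper splits according to whether the relatively inner point lies in the interior of $\overline{\Lambda}$ and, in the negative case, concludes via a Poincar\'e inequality on the half-ball; your observation that the zero-extension $f$ has vanishing gradient on the connected ball $B$ and hence is a.e.\ constant handles both cases at once (if $|B \setminus \Lambda| > 0$ the two descriptions of $f$ are already incompatible). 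The one step to firm up is the quasi-everywhere identification of $\tilde f$ with the limit of the $\bar v_n$: the capacity framework lives on $\R^d$, so you should first multiply by a cutoff $\eta \in C_0^\infty(B)$ with $\eta \equiv 1$ on $\frac{1}{2}B$, so that $\eta \bar v_n \in C_E^\infty(\R^d)$ converges in $W^{1,p}(\R^d)$ and Theorem~\ref{t-coincid} applies directly to the limit; this is how the paper argues and it avoids the (true but here uncited) fact that a subsequence of continuous approximants converges q.e.\ to the quasicontinuous representative. In part~(ii) the difference is more substantial: the paper uses only one local extension operator around a single point of $E \cap \overline{\partial\Lambda \setminus E}$ and re-runs the exceptional-set machinery of Lemma~\ref{l-trace}, whereas you invoke the global operator of Theorem~\ref{t-extension always preserves trace} (legitimate: its hypotheses are exactly those of~(ii) and it does not depend on this lemma) together with the truncation $\bar u = \min\{\max\{u,0\},1\}$, whose nonnegativity converts the Lebesgue-point limit directly into a lower bound by the relative density of $\Lambda$, supplied at every point of $E$ by Proposition~\ref{p-extension implies dset}. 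This is arguably cleaner; just note that the q.e.\ vanishing of the averages of $\bar u$ on $E$ is most economically obtained from \eqref{e-pointwi}--\eqref{e-Lebesgue} for $u$ and the $1$-Lipschitz property of the truncation (which fixes $0$), rather than by asserting abstractly that truncation preserves $W_E^{1,p}(\R^d)$.
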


\begin{proof}
We treat both cases separately.
 \begin{enumerate}
  \item Assume the assertion was false and $\mathds{1} \in
	W_E^{1,p}(\Lambda)$. Let $\x$ be the inner point of $E$ from the
	hypotheses and let $B := B(\x,r)$ be a ball that does not
	intersect $\partial \Lambda \setminus E$. Put $\frac{1}{2}B:= B(\x
	,\frac{r}{2})$ and let $\eta \in C^\infty_0(B)$ be such that $\eta
	\equiv 1$ on $\frac{1}{2} B$. We distinguish whether or not $\x$ is an
	interior point of $\overline{\Lambda}$.

	First, assume it is not. For every $\psi \in C^\infty_E(\Lambda)$
	the function $\eta \psi$ belongs to $W^{1,p}_0(\Lambda \cap B)$ and as
	such admits a $W^{1,p}$-extension $\widehat {\eta \psi}$ by zero to the
	whole of $\R^d$. In particular,
	\begin{align*}
	  \widehat{\eta \psi} (\y) =
		\begin{cases}
		  \psi (\y), & \text{if} \quad \y \in 
			  \frac{1}{2} B \cap \Lambda\\
		  0, & \text{if} \quad \y \in \frac{1}{2} B
			  \setminus \Lambda
		\end{cases}
	\end{align*}
	and consequently,
	\begin{align*}
	  \|\nabla \widehat{\eta \psi}\|_{L^p(\frac{1}{2}B)} =
		\|\nabla \psi\|_{L^p(\frac{1}{2} B \cap \Lambda)}.
	\end{align*}
	Since by assumption $\mathds{1}$ is in the
	$W^{1,p}(\Lambda)$-closure of $C^\infty_E(\Lambda)$ and since the
        mappings $W^{1,p}_E(\Lambda) \ni \psi \mapsto \nabla \widehat{\eta \psi}
	\in L^p(\frac{1}{2}B)$ and $W^{1,p}_E(\Lambda) \ni \psi
	\mapsto \nabla \psi \in L^p(\Lambda \cap \frac{1}{2}B)$
	are continuous, the previous equation extends to $\psi = \mathds{1}$:
	\begin{align*}
	  \|\nabla \widehat{\eta \mathds{1}}\|_{L^p(\frac{1}{2}B)}
	  = \|\nabla \mathds{1}\|_{L^p(\frac{1}{2} B \cap \Lambda)}
	  = 0.
	\end{align*}
	On the other hand $\x$ is not an inner point of
	$\overline{\Lambda}$ so that in particular $\frac{1}{2} B \setminus
	\overline{\Lambda}$ is non-empty. Since this set is open, $|\frac{1}{2}
	B \setminus \overline{\Lambda}| > 0$.
	Recall that by construction $\widehat{\eta \mathds{1}} \in W^{1,p}(B)$
	vanishes a.e.\ on $\frac{1}{2} B \setminus \overline{\Lambda}$. Hence,
	for some $c > 0$ the Poincar\'e inequality
	\begin{align*}
	  \|\widehat{\eta \mathds{1}}\|_{L^p(\frac{1}{2} B)} \le c
		\|\nabla \widehat{\eta \mathds{1}}
		\|_{L^p(\frac{1}{2} B)},
	\end{align*}
	holds, cf.\@ \cite[Thm.~4.4.2]{ziemer}. However, we already know that
	the right hand side is zero, whereas the left hand side equals
	$|\frac{1}{2} B \cap \Lambda|^{1/p}$, which is nonzero
	since $\frac{1}{2} B \cap \Lambda$ is nonempty and open --  a
	contradiction. 

	Now, assume $\x$ is contained in the interior of $\overline{\Lambda}$.
	Upon diminishing $B$ we may assume $B \subset \overline{\Lambda}$. For
	every $\psi \in C_E^\infty(\R^d)$ we have $\eta \psi \in
	C_E^\infty(\R^d)$ with an estimate
	\begin{align*}
	\|\eta \psi\|_{W^{1,p}(\R^d)} \leq c \|\psi\|_{W^{1,p}(B)} 
	= c \Big(\int_B |\psi|^p + |\nabla \psi|^p \; \d
	  \x\Big)^{1/p}
	\end{align*}
	for some constant $c>0$ depending only on $\eta$ and $p$. By our choice
	of $B$ split
	\begin{align*}
	 B = B \cap \overline{\Lambda} 
	    = (B \cap \Lambda)\cup (B \cap \partial \Lambda) 
	    = (B \cap \Lambda)\cup (B \cap E).
	\end{align*}
	Since $\psi$ vanishes in a neighborhood of $E$,
	\begin{align}
	\label{Eq1: const}
	\|\eta \psi\|_{W^{1,p}(\R^d)} 
	\leq c \Big(\int_{B \cap \Lambda} |\psi|^p + |\nabla \psi|^p \; \d
	  \x\Big)^{1/p}
	\leq c\|\psi\|_{W^{1,p}(\Lambda)}.
	\end{align}
	Taking into account $\eta \equiv 1$ on $\frac{1}{2}B$, the same
	reasoning gives
	\begin{align}
	\label{Eq2: const}
	\int_{\frac{1}{2} B} |\nabla (\eta \psi)|^p \; \d \x
	= \int_{\frac{1}{2} B} |\nabla \psi|^p \; \d \x
	\leq \int_\Lambda |\nabla \psi|^p \; \d \x.
	\end{align}
	By assumption there is a sequence $\{\psi_j\}_j \subset
	C_E^\infty(\Lambda)$ tending to $\mathds{1}$
	in the $W^{1,p}(\Lambda)$-topology. Due to \eqref{Eq1: const} and the
	choice of $\eta$, the sequence $\{\eta \psi_j\}_j
	\subset C_E^\infty(\R^d)$ then tends to some $u \in W_E^{1,p}(\R^d)$
	satisfying $u = 1$ a.e.\ on $\frac{1}{2}B \cap \Lambda$. Due to
	\eqref{Eq2: const}, $\nabla u = 0$ a.e.\ on $\frac{1}{2}B$, meaning that
	$u$ is constant on this set. Since $\frac{1}{2}B \cap \Lambda$ as a
	non-empty open set has positive Lebesgue measure, all this can only
	happen if $u = 1$ a.e.\ on $\frac{1}{2}B$. Hence,
	\begin{align*}
	 \lim_{r \to 0} \frac{1}{|B(\y,r)|} \int_{B(\y,r)} u \; \d \x = 1
	\end{align*}
	for every $\y \in \frac{1}{3} B \cap E$, which by
	Theorem~\ref{t-coincid} is only possible if $C_{1,p}(\frac{1}{3} B \cap
	E) = 0$. By Theorem~\ref{t-coincid} this in turn implies
	$\H_l^\infty(\frac{1}{3} B \cap E) = 0$	in contradiction to the
	$l$-thickness of $E$.

  \item Again assume the assertion was false. Then by \ref{l-constantzero:i}
	there exists some $\x \in E$ that is not an inner point of $E$ with
	respect to $\partial \Lambda$. Hence $\x$ is an accumulation point of
	$\partial \Lambda \setminus E$ and by assumption there is a neighborhood
	$U = U_{\x}$ of $\x$ such that $\Lambda \cap U$ is a $W^{1,p}$ extension
	domain. We denote the corresponding extension operator by
	$\mathfrak{E}$.
	We shall localize the assumption $\mathds{1} \in W_E^{1,p}(\Lambda)$
	within $U$ to arrive at a contradiction.

	To this end, let $r_0 > 0$ be such that $\overline{B(\x,r_0)} \subset
	U$ and let $\eta \in C_0^\infty(U)$ be such that $\eta \equiv 1$ on
	$B(\x,r_0)$. Then also $\eta = \eta \mathds{1}	\in
	W_E^{1,p}(\Lambda)$ and in particular $\eta|_{\Lambda \cap U}$
	belongs to $W_F^{1,p}(\Lambda \cap U)$, where $F :=
	\overline{B(\x,r_0/2) \cap E} \subset \partial(\Lambda \cap U)$. Recall
	from Proposition~\ref{p-extension implies dset} that the
	$W^{1,p}$-extension domain $\Lambda \cap U$ satisfies in particular
	\begin{align*}
	  \liminf_{r \to 0} \frac {|B(\mathrm y,r) \cap \Lambda \cap U)|}{r^d} >
	  0.
	\end{align*} 
	around every $\y \in \partial(\Lambda \cap U)$. Thus,
	Lemma~\ref{l-trace}(i) yields
	$u:=\mathfrak{E}(\eta|_{\Lambda \cap U}) \in W_F^{1,p}(\R^d)$.

	On the other hand, similar to the proof of Lemma~\ref{l-trace}
	let $\u$ be the representative of $u$ that is defined by limits of
	integral means on the complement of some exceptional set $N$ with
	$C_{1,p}(N) = 0$ and fix $\y \in F \setminus N$. Take $W$ as in
	\eqref{d-set_quasicontinuity1} and
	\eqref{d-set_quasicontinuity2}. Repeating the arguments in the proof of
	Lemma~\ref{l-trace} reveals that the restriction of $\u$ to $\R^d
	\setminus W$ is continuous at $\y$ and that $|B(\y,r) \cap \Lambda \cap
	U \cap (\R^d \setminus W)| > 0$ if $r>0$ is small enough. By
	construction $\u = 1$ a.e.\ on $B(\y,r) \cap \Lambda \cap U \cap (\R^d
	\setminus W)$ if $r < r_0$. Hence, there is a sequence
	$\{\x_j\}_j$ approximating $\y$ such that $\u(\x_j) = 1$ for every $j
	\in \N$. By continuity $\u(\y) = 1$ follows. This proves that $\u = 1$
	holds $(1,p)$-quasieverywhere on $F$. 

	By Theorem~\ref{t-coincid} this can only happen if $C_{1,p}(F) = 0$,
	which as in (i) contradicts the $l$-thickness of $E$. \qedhere
\end{enumerate}
\end{proof}

\begin{remark} \label{r-explain}
\begin{enumerate}
 \item The proof of \ref{l-constantzero:i} shows that $\mathds 1 \notin
  W_E^{1,p}(\Lambda)$ if $E$ is merely closed and contains a relatively inner
  point that is not an inner point of $\overline{\Lambda}$.
 \item Of course the Poincar\'{e} inequality holds in the case $E =\partial
 \Lambda$ irrespective of any geometric considerations as long as $\Lambda$ is
 bounded. This can be rediscovered by the results of this section.
 Indeed, $E$ then only consists of relatively inner points and as
 $\emptyset \neq \partial \overline{\Lambda}   \subset \partial \Lambda = E$
 holds,
 it cannot be contained in the interior of $\overline{\Lambda}$. Hence
 $\mathds{1} \notin W_0^{1,p}(\Lambda)$. The  compactness of the embedding
 $W_0^{1,p}(\Lambda) \hookrightarrow  L^p(\Lambda)$ is classical and
 Theorem~\ref{t-constant} gives the claim.
\end{enumerate}
\end{remark}

Under the second assumption of Lemma~\ref{l-constantzero} there exists
a linear continuous Sobolev extension operator $\mathfrak{E}: W_E^{1,p}(\Lambda)
\to W_E^{1,p}(\R^d)$, see Theorem~\ref{t-extension always preserves trace}. Then
the compactness of the embedding $W_E^{1,p}(\Lambda) \hookrightarrow
L^p(\Lambda)$ is classical and owing to Theorem~\ref{t-constant} we can record
the following special Poincar\'{e} inequality.

\begin{proposition} \label{p-concrete_poincare}
Let $p \in {]1,\infty[}$ and let $\Lambda$ be a bounded domain. Suppose that $E
\subset \partial \Lambda$ is $l$-thick for some $l \in {]d-p,d]}$ and that for
each $\x \in \overline{\partial \Lambda \setminus E}$ there is an open
neighborhood $U_\x$ of $ \x$ such that $\Lambda \cap U_\mathrm x$ is a
$W^{1,p}$-extension domain. Then $W_E^{1,p}(\Lambda)$ may equivalently be normed
by $v \mapsto \bigl( \int_\Lambda |\nabla v|^p \mathrm{d} \x \bigr)^{1/p}$.
\end{proposition}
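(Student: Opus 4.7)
The plan is to apply Proposition~\ref{t-constant} to the closed subspace $X := W_E^{1,p}(\Lambda)$ of $W^{1,p}(\Lambda)$. That abstract result delivers exactly the claimed norm equivalence, so the task reduces to verifying its three hypotheses: closedness of $X$, the exclusion $\mathds{1} \notin X$, and compactness of the embedding $X \hookrightarrow L^p(\Lambda)$.

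Closedness is a consequence of Definition~\ref{d-WkpD}, which identifies $W_E^{1,p}(\Lambda)$ with the completion of $C_E^\infty(\Lambda)$ under the $W^{1,p}$-norm and hence automatically makes it a closed subspace of $W^{1,p}(\Lambda)$. For the statement $\mathds{1} \notin W_E^{1,p}(\Lambda)$, the geometric hypotheses of the proposition are precisely those of Lemma~\ref{l-constantzero}\ref{l-constantzero:ii}, so that lemma applies verbatim and delivers the non-membership. To establish the compact embedding, I would draw on the results of Section~\ref{extens}: Theorem~\ref{t-extension always preserves trace} furnishes a bounded linear extension operator $\mathfrak{E}: W_E^{1,p}(\Lambda) \to W_E^{1,p}(\R^d) \subset W^{1,p}(\R^d)$. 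Fixing an open ball $B$ with $\overline{\Lambda} \subset B$ and a cutoff $\eta \in C_0^\infty(B)$ satisfying $\eta \equiv 1$ on $\overline{\Lambda}$, the assignment $v \mapsto \eta \cdot \mathfrak{E}v$ is bounded from $W_E^{1,p}(\Lambda)$ into $W_0^{1,p}(B)$. Composing with the compact Rellich--Kondrachov embedding $W_0^{1,p}(B) \hookrightarrow L^p(B)$ and restricting to $\Lambda$ recovers the identity $W_E^{1,p}(\Lambda) \hookrightarrow L^p(\Lambda)$ as a compact operator.

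With all three hypotheses of Proposition~\ref{t-constant} in place, the norm equivalence is immediate. The genuinely delicate step was the exclusion $\mathds{1} \notin W_E^{1,p}(\Lambda)$, but that was already handled in Lemma~\ref{l-constantzero} via a subtle interplay of $l$-thickness with the quasi-continuity of Sobolev functions from potential theory; in the present assembly there is no further obstruction.
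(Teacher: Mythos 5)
Your proof is correct and follows essentially the same route as the paper: the paper likewise invokes Lemma~\ref{l-constantzero}\ref{l-constantzero:ii} to exclude $\mathds{1}$, obtains the extension operator from Theorem~\ref{t-extension always preserves trace}, notes that the compactness of $W_E^{1,p}(\Lambda) \hookrightarrow L^p(\Lambda)$ is then classical, and concludes via Proposition~\ref{t-constant}. You merely spell out the standard cutoff/Rellich--Kondrachov argument that the paper leaves implicit.
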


Now, also Theorem~\ref{t-hardy concrete} follows. In fact, this result is just
the synthesis of the above proposition with Theorems~\ref{t-hardy} and
\ref{t-extension always preserves trace}.

\section{Proof of Theorem~\ref{t-converse hardy}}\label{sec-conv-hardy}
\noindent The strategy of proof is to write $u$ as the sum of $v \in
W^{1,p}(\Omega)$ with $v/\dist_{\partial \Omega} \in L^p(\Omega)$ and $w \in
W^{1,p}$ with support within a neighborhood of $\overline{\partial \Omega
\setminus D}$. Then $v$ can be handled by the following classical result.

\begin{proposition}[{\cite[Thm.~V.3.4]{ed/ev}}]\label{p-hardy_implies_W1p0}
Let $\emptyset \subsetneq \Lambda \subsetneq \R^d$ be open and
let $p\in {]1,\infty[}$. Then if $u \in W^{1,p}(\Lambda)$ and
$u/\dist_{\partial \Lambda} \in L^p(\Lambda)$, it follows $u \in
W_0^{1,p}(\Lambda)$. 
\end{proposition}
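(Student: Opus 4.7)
The idea is to approximate $u$ in the $W^{1,p}(\Lambda)$-norm by functions with compact support in $\Lambda$; once this is done, a routine mollification yields an approximating sequence in $C_c^\infty(\Lambda)$, and hence $u\in W_0^{1,p}(\Lambda)$. The construction uses a family of Lipschitz cutoffs that vanish in an $\epsilon$-neighborhood of $\partial\Lambda$, and the Hardy-type hypothesis $u/\dist_{\partial\Lambda}\in L^p(\Lambda)$ will be the precise ingredient needed to absorb the blow-up of their gradients.

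\textbf{Cutoff construction.} Fix $\phi\in C^\infty(\R)$ with $0\le\phi\le 1$, $\phi\equiv 0$ on $(-\infty,1]$, $\phi\equiv 1$ on $[2,\infty)$, and $|\phi'|\le C$. For $\epsilon>0$ put
\begin{align*}
 \eta_\epsilon(\x):=\phi\bigl(\dist_{\partial\Lambda}(\x)/\epsilon\bigr),\quad \x\in\Lambda.
\end{align*}
Since $\dist_{\partial\Lambda}$ is $1$-Lipschitz, $\eta_\epsilon$ is Lipschitz with $|\nabla\eta_\epsilon|\le C/\epsilon$, and $\nabla\eta_\epsilon$ is supported in $S_\epsilon:=\{\epsilon\le\dist_{\partial\Lambda}\le 2\epsilon\}$. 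If $\Lambda$ is unbounded, multiply additionally by a standard cutoff $\psi_R\in C_c^\infty(\R^d)$ with $\psi_R\equiv 1$ on $B(0,R)$ and $|\nabla\psi_R|\le C/R$, and set $u_{\epsilon,R}:=\psi_R\eta_\epsilon u$. Each $u_{\epsilon,R}$ lies in $W^{1,p}(\Lambda)$ and has support contained in the closed set $\{\dist_{\partial\Lambda}\ge\epsilon\}\cap\overline{B(0,R)}$, which is a compact subset of $\Lambda$.

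\textbf{Convergence and the main obstacle.} The only non-routine task is to show $u_{\epsilon,R}\to u$ in $W^{1,p}(\Lambda)$ as $\epsilon\to 0$ and $R\to\infty$. The $L^p$-convergence of $u_{\epsilon,R}$ to $u$ and of $\eta_\epsilon\nabla u$ to $\nabla u$ follow from dominated convergence, using that $\mathds 1_{S_\epsilon}\to 0$ pointwise a.e.\ and that $u,\nabla u\in L^p(\Lambda)$. The genuinely delicate term is the gradient-of-cutoff contribution $u\nabla(\psi_R\eta_\epsilon)$. For the $\eta_\epsilon$-piece, observe that on $S_\epsilon$ one has $1/\epsilon\le 2/\dist_{\partial\Lambda}$, so
\begin{align*}
 |u\nabla\eta_\epsilon|\le \frac{C|u|}{\epsilon}\mathds 1_{S_\epsilon}\le 2C\,\frac{|u|}{\dist_{\partial\Lambda}}\mathds 1_{S_\epsilon},
\end{align*}
and by hypothesis the majorant lies in $L^p(\Lambda)$ and is independent of $\epsilon$; since $\mathds 1_{S_\epsilon}\to 0$ a.e., dominated convergence gives $\|u\nabla\eta_\epsilon\|_{L^p(\Lambda)}\to 0$. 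The $\psi_R$-piece $u\nabla\psi_R$ is bounded by $C|u|/R$ on $\Lambda$ and tends to $0$ in $L^p$ as $R\to\infty$ by integrability of $|u|^p$. This cutoff-gradient step is the heart of the proof and is precisely where the Hardy-type assumption is indispensable.

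\textbf{Conclusion.} Having produced, for every $\theta>0$, some $u_{\epsilon,R}\in W^{1,p}(\Lambda)$ with compact support in $\Lambda$ and $\|u-u_{\epsilon,R}\|_{W^{1,p}(\Lambda)}<\theta/2$, pick $\delta>0$ smaller than the distance from $\supp(u_{\epsilon,R})$ to $\partial\Lambda$ and convolve with a standard mollifier $\rho_\delta$. The result lies in $C_c^\infty(\Lambda)\subset W_0^{1,p}(\Lambda)$ and, for $\delta$ sufficiently small, differs from $u_{\epsilon,R}$ by less than $\theta/2$ in $W^{1,p}$. Hence $u$ is approximated arbitrarily well in $W^{1,p}(\Lambda)$ by elements of $W_0^{1,p}(\Lambda)$, and closedness of the latter subspace yields $u\in W_0^{1,p}(\Lambda)$.
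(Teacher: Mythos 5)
Your proof is correct and complete: the cutoff $\eta_\epsilon=\phi(\dist_{\partial\Lambda}/\epsilon)$ with the pointwise bound $1/\epsilon\le 2/\dist_{\partial\Lambda}$ on $S_\epsilon$, followed by dominated convergence and mollification, is exactly the classical truncation argument. The paper does not prove this proposition but merely cites it from Edmunds--Evans (Thm.~V.3.4), and your argument is essentially the standard proof given there, so there is nothing to add.
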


\noindent For $w$ we can -- since local extension operators are available --
rely on the techniques developed in Section~\ref{extens}. A key observation is
an intrinsic relation between the property $\frac{u}{\dist_D} \in L^p(\Omega)$
and Sobolev regularity of the function $\log(\dist_D)$. In fact, a formal
computation gives
\begin{align*}
 \nabla (u \log(\dist_D)) = \log(\dist_D) \nabla u + \frac{u}{\dist_D} \nabla
\dist_D.
\end{align*}
Details are carried out in the following five consecutive
steps.

\subsection*{Step 1: Splitting $u$ and handling the easy term}

As in the proof of Proposition~\ref{p-extension} for
every $\x \in \overline{\partial \Omega \setminus D}$, let $U_\x$ be the open
neighborhood of $\x$ from the assumption, let $U_{\x_1}, \ldots, U_{\x_n}$ be a
finite subcovering of $\overline{\partial \Omega \setminus D}$ and let $\eps >
0$ be such that the sets $U_{\x_1},\ldots,U_{\x_n}$, together with $U
:= \{\mathrm y \in \R^d : \dist(\mathrm y, \overline{\partial \Omega
\setminus D}) > \epsilon \}$, form an open covering of $\overline \Omega$.
Finally, let $\eta, \eta_1, \ldots, \eta_n$ be a subordinated
$C_0^\infty$-partition of unity. The described splitting is $u = v + w$, where
$v:= \eta u$ and $w:= \sum_{j = 1}^n \eta_j u = (1-\eta) u$.
Since
\begin{align*}
 \dist_{\partial \Omega}(\x) \geq \min\{\eps, \dist_D(\x)\}
\geq \min\{\eps \diam(\Omega)^{-1},1 \} \cdot \dist_D(\x)
\end{align*}
holds for every $\x \in \supp(\eta) \cap \Omega$, the function $v \in
W^{1,p}(\Omega)$
satisfies
\begin{align*}
 \int_{\Omega} \left| \frac{v}{\dist_{\partial \Omega}} \right|^p \; \d \x
\leq c \int_{\Omega} \left| \frac{v}{\dist_D} \right|^p \; \d \x
\leq c \int_{\Omega} \left| \frac{u}{\dist_D} \right|^p \; \d \x < \infty
\end{align*}
by assumption on $u$. Now, Proposition~\ref{p-hardy_implies_W1p0} yields $v \in
W_0^{1,p}(\Omega) \subset W_D^{1,p}(\Omega)$.

\subsection*{Step 2: Extending $w$}

By assumption the sets $\Omega \cap U_{\x_j}$, $1 \leq j \leq n$, are
$W^{1,p}$-extension domains. Since $w =(1-\eta)u$, where $(1-\eta)$ has compact
support in the union of these domains, an extension $\widehat{w} \in
W^{1,p}(\R^d)$ of $w \in W^{1,p}(\Omega)$ with compact support within
$\bigcup_{j=1}^n U_{\x_j}$ can be constructed just as in the proof of
Proposition~\ref{p-extension}. Now, if we can show $w \in W_D^{1,p}(\Omega)$,
then by Step 1 also $u = v + w$ belongs to this space.

\subsection*{Step 3: Estimating the trace of $\widehat{w}$}

To prove $\widehat{w} \in W_D^{1,p}(\R^d)$ we rely once more on the techniques
used in the proof of Lemma~\ref{l-trace}. So, let $\widehat{\w}$ be the
representative of $\widehat{w}$ defined on $\R^d \setminus N$ via
\begin{align*}
 \widehat{\w}(\y):= \lim_{r \to 0} \frac{1}{|B(\y,r)|} \int_{B(\y, r)}
\widehat{w} \; \d \x,
\end{align*}
where the exceptional set $N$ is of vanishing $(1,p)$-capacity. Put
\begin{align*}
 U_\bigstar:= \bigcup_{j=1}^n U_{\x_j}, \quad \Omega_\bigstar := \Omega \cap
U_\bigstar, \quad \text{and} \quad D_\bigstar = \overline{D \cap U_\bigstar}
\subseteq \partial \Omega_\bigstar.
\end{align*}
Since $\widehat{w}$ has support in $U_\bigstar$ it holds $\widehat{\w}(\y) = 0$
for every $\y \in D \setminus D_\bigstar$. For the rest of the step let $\y \in
D_\bigstar \setminus N$. 

By Proposition~\ref{p-extension implies dset} each set $\Omega
\cap U_{\x_j}$ is a $d$-set and it can readily be checked that this property
inherits to their union $\Omega_\bigstar$. Hence, $\Omega_\bigstar$ satisfies
the asymptotically nonvanishing relative volume condition \eqref{e-asymp} around
$\y$ with a lower bound $c>0$ on the limes inferior that is independent of $\y$
and -- just as in the proof of Lemma~\ref{l-trace} -- a set $W \subset \R^d$ can
be constructed such that the restriction of $\widehat{\w}$ to $\R^d \setminus W$
is continuous at $\y$ and such that $|B(\y,r) \cap \Omega_\bigstar \cap (\R^d
\setminus W)| \geq cr^d/2$ if $r>0$ is small enough. By these properties of
$W$:
\begin{align*}
 |\widehat{\w}(\y)| 
&= \left|\lim_{r \to 0} \frac{1}{|B(\y,r) \cap \Omega_\bigstar \cap
(\R^d \setminus W)|} \int_{B(\y,r) \cap \Omega_\bigstar \cap (\R^d \setminus
W)} 
\widehat{\w} \; \d \x \right| \\
&\leq \limsup_{r \to 0} \frac{2}{cr^d} \int_{B(\y,r) \cap \Omega_\bigstar}
|\widehat{w}| \; \d \x \\
&= \limsup_{r \to 0} \frac{2}{cr^d} \int_{B(\y,r) \cap \Omega_\bigstar}
|w| \; \d \x.
\end{align*}
In order to force these mean-value integral to vanish in the limit $r \to 0$,
introduce the function $\log(\dist_D)^{-1}$, which is bounded above in absolute
value by $|\log r|^{-1}$ on $B(\y,r)$ if $r < 1$. It follows
\begin{align} \label{e-estimate_tracewidehat}
|\widehat{\w}(\y)| \leq c \limsup_{r \to 0} |\log r|^{-1} 
\bigg(\frac{1}{r^d} \int_{B(\y,r) \cap \Omega_\bigstar} 
|w \log(\dist_D)| \; \d \x \bigg).
\end{align}
So, since $|\log r|^{-1} \to 0$ as $r \to 0$ the function $\widehat{\w}$
vanishes at every $\y \in D_\bigstar \setminus N$ for which the mean value
integrals on the right-hand side remain bounded as $r$ tends to zero.

\subsection*{Step 4: Intermezzo on $w \log(\dist_D)$}

In this step we prove the following result.

\begin{lemma}
\label{Lem: Logdist lemma}
Let $p \in {]1,\infty[}$, let $\Lambda \subset \R^d$ be a bounded $d$-set, and
let $E \subset \partial \Lambda$ be closed and porous. Suppose $u \in
W^{1,p}(\Lambda)$ has an extension $v \in W^{1,p}(\R^d)$ and satisfies
$\frac{u}{\dist_E} \in L^p(\Lambda)$. If $r \in {]1,p[}$ and $s \in {]0,1[}$,
then the function $|u \log(\dist_E)|$ defined on
$\Lambda$ has an extension in the Bessel potential space $H^{s,r}(\R^d)$ that
is positive almost everywhere.
\end{lemma}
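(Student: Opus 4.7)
The plan is to produce the extension in three movements: (i) show the intermediate regularity $u\log(\dist_E)\in W^{1,r}(\Lambda)$, (ii) extend this function to $H^{s,r}(\R^d)$, and (iii) pass to absolute value to secure positivity.

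First, I would exploit that porosity of $E$ forces $\log(\dist_E)\in L^q(\Lambda)$ for every finite $q$. Indeed, by Remark~\ref{r-porous} the Aikawa dimension of $E$ is some $t<d$, hence $\dist_E^{t-d}$ is locally integrable; Chebyshev then yields a volume decay $|\{\y\in\Lambda:\dist_E(\y)<\delta\}|\leq c\,\delta^{d-t}$, and a layer-cake argument on $|\log(\dist_E)|^q$ delivers the $L^q$-bound since exponential weights beat all polynomials. Combined with the chain rule
\[
\nabla(u\log(\dist_E))=\log(\dist_E)\,\nabla u+\frac{u}{\dist_E}\,\nabla\dist_E
\]
and $|\nabla\dist_E|\leq 1$, the second summand lies in $L^p(\Lambda)\subset L^r(\Lambda)$ by hypothesis, while H\"older's inequality with exponents $p/r$ and $p/(p-r)$ places the first into $L^r(\Lambda)$. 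An analogous estimate gives $u\log(\dist_E)\in L^r(\Lambda)$, so that $u\log(\dist_E)\in W^{1,r}(\Lambda)$ for every $r\in(1,p)$.

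Second, I would extend to $H^{s,r}(\R^d)$. The most direct route uses the given extension $v\in W^{1,p}(\R^d)$ of $u$ together with a cutoff $\psi\in C_0^\infty(\R^d)$ satisfying $\psi\equiv 1$ on $\overline\Lambda$: one considers $F:=\psi\, v\log(\dist_E)$, which agrees with $u\log(\dist_E)$ on $\Lambda$ and has compact support. Membership $F\in H^{s,r}(\R^d)$ is then verified through a product/interpolation estimate in Bessel potential spaces, exploiting that $s<1$ and $r<p$ provide slack in the H\"older exponents and that $\log(\dist_E)$ has tame $L^q$-integrability by the previous step. Alternatively, since $\Lambda$ is a bounded $d$-set by hypothesis, one may appeal to the Jonsson--Wallin/Triebel extension theory for function spaces on $d$-set domains, which provides a bounded operator $W^{1,r}(\Lambda)\to H^{s,r}(\R^d)$ for every $s\in(0,1)$ and $r\in(1,\infty)$; applied to $u\log(\dist_E)$ from the first step this does the job. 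Finally, replacing the resulting extension by its absolute value is harmless, since $t\mapsto|t|$ is a contraction and hence acts boundedly on $H^{s,r}(\R^d)$ for $s\in(0,1]$; this yields a non-negative $H^{s,r}$-extension of $|u\log(\dist_E)|$.

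The main obstacle is the second movement. The naive candidate $\psi\, v\log(\dist_E)$ cannot be shown to lie in $W^{1,r}(\R^d)$: its gradient features the term $\psi\, v\,\nabla\dist_E/\dist_E$, whose $L^r$-control would require $v/\dist_E\in L^r$ on all of $\supp\psi$, an estimate that the hypotheses do not supply outside $\Lambda$. The resolution is precisely that the target is the fractional-order Bessel space $H^{s,r}$ with $s<1$: the differentiability demand is softened, and one recovers the missing regularity either by interpolation with the easy $L^r$-bound or via an extension operator built specifically for $d$-set domains. Aligning the porosity of $E$, the $d$-set geometry of $\Lambda$, and the exponent constraints $s<1$, $r<p$ requires the most careful bookkeeping.
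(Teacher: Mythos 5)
Your first movement is correct: on $\Lambda$ one has $\dist_E>0$, the product rule applies, porosity gives $\log(\dist_E)\in L^q(\Lambda)$ for all finite $q$ (this is exactly the paper's Aikawa-dimension argument), and together with $u/\dist_E\in L^p(\Lambda)$ one gets $u\log(\dist_E)\in W^{1,r}(\Lambda)$ for all $r<p$. The taking of absolute values at the end is also unproblematic. The gap is the entire second movement, which is where the actual content of the lemma lies. The operator you invoke --- a bounded extension $W^{1,r}(\Lambda)\to H^{s,r}(\R^d)$ for every bounded $d$-set $\Lambda$ and all $s\in{]0,1[}$, $r\in{]1,\infty[}$ --- does not exist. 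What Jonsson--Wallin (Proposition~\ref{p-Jonsson-Wallin theorem on dsets}) provides is an extension from the \emph{intrinsically defined} Besov space on $\Lambda$, i.e.\ from the class of $f$ with finite double-integral seminorm $\iint_{|\x-\y|<1}|f(\x)-f(\y)|^r|\x-\y|^{-d-sr}$; and the embedding $W^{1,r}(\Lambda)\hookrightarrow B^{r,r}_s(\Lambda)$ (intrinsic) fails for general bounded $d$-sets. The measure-density condition is necessary but far from sufficient for Sobolev extendability (Proposition~\ref{p-extension implies dset} only gives one direction): a slit disk is a bounded $d$-set, yet a $W^{1,r}$ function jumping across the slit has infinite intrinsic $B^{r,r}_s$-seminorm as soon as $sr\geq 1$. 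Your fallback of ``interpolating with the $L^r$-bound'' cannot repair this either, since for a single function there is nothing at the $W^{1,r}(\R^d)$ endpoint to interpolate with --- that endpoint is precisely what is missing.

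A tell-tale symptom is that your workable route (b) never uses the hypothesis that $u$ admits a global extension $v\in W^{1,p}(\R^d)$; if the argument went through it would prove the lemma without that assumption, which should raise suspicion. The paper uses $v$ crucially: it estimates the Jonsson--Wallin double integral for $u\log(\dist_E)$ directly, splitting the difference by adding and subtracting $u(\y)\log(\dist_E(\x))$. The piece carrying differences of $u$ is controlled by H\"older together with $v\in W^{1,p}(\R^d)\subset B_t^{p,p}(\R^d)$, so that the double integral over $\Lambda\times\Lambda$ is dominated by one over $\R^d\times\R^d$ --- this is how the bad internal geometry of a mere $d$-set is bypassed. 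The piece carrying differences of $\log(\dist_E)$ is handled by the mean value theorem, $|\log\dist_E(\x)-\log\dist_E(\y)|\leq|\x-\y|/\min(\dist_E(\x),\dist_E(\y))$, combined with $u/\dist_E\in L^p(\Lambda)$; this is your chain-rule heuristic, but implemented at the level of finite differences, where no extension of $u\log(\dist_E)$ across $\partial\Lambda$ is presupposed. To fix your proof you would have to carry out these difference estimates (or restrict the lemma to $W^{1,r}$-extension domains, which would be too weak for the application to $\Omega_\bigstar$).
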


For the proof we need the following extension result of Jonsson and Wallin.

\begin{proposition}[{\cite[Thm.~V.1.1]{jons}}]
\label{p-Jonsson-Wallin theorem on dsets}
Let $s \in {]0,1[}$, $p \in {]1,\infty[}$ and let $\Lambda \subset \R^d$ be a
$d$-set. Then there exists a linear operator $\mathfrak{E}$ that extends every
measurable function $f$ on $\Lambda$ that satisfies
\begin{align*}
 \|f\|_{L^p(\Lambda)} + \bigg(\iint_{\substack{\x,\y \in
\Lambda \\ |\x-\y| < 1}}  \frac{|f(\x) - f(\y)|^p}{|\x-\y|^{d+sp}} \;
\d \x \; \d \y \bigg)^{1/p} < \infty
\end{align*}
to a function $\mathfrak{E} f$ in the Besov space $B_s^{p,p}(\R^d)$ of all
measurable functions $g$ on $\R^d$ such that
\begin{align*}
 \|g\|_{L^p(\R^d)} + \bigg(\iint_{\x,\y \in \R^d}  \frac{|g(\x) -
g(\y)|^p}{|\x-\y|^{d+sp}} \;
\d \x \; \d \y \bigg)^{1/p} < \infty.
\end{align*}
\end{proposition}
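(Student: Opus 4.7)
The strategy is to verify that $f := u \log(\dist_E)$, viewed intrinsically as a function on the $d$-set $\Lambda$, satisfies the Besov-type hypothesis of Proposition~\ref{p-Jonsson-Wallin theorem on dsets}, extend it via that proposition, and finally pass to the Bessel potential scale and impose positivity.

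\medskip

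The first step is to prove $f \in W^{1,r}(\Lambda)$ for every $r \in {]1,p[}$. Informally, the product rule gives
\begin{align*}
 \nabla f = \log(\dist_E)\,\nabla u + \frac{u}{\dist_E} \nabla \dist_E.
\end{align*}
The second summand lies in $L^p(\Lambda)$ by hypothesis on $u$ together with $|\nabla \dist_E| \le 1$ a.e.\ (Rademacher). For the first summand, boundedness of $\Lambda$ together with the mild singularity of $\log$ yields $\log(\dist_E) \in L^q(\Lambda)$ for every $q < \infty$ (for a cleaner bound one may use $|\log t| \le C_\eps t^{-\eps}$ for small $t$, combined with the Aikawa-dimension estimate from Remark~\ref{r-porous} that porosity of $E$ provides). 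Hölder's inequality at exponents $1/r = 1/p + 1/q$ then puts $\log(\dist_E)\,\nabla u$ in $L^r(\Lambda)$, and an analogous argument gives $f \in L^r(\Lambda)$. Rigorous justification of the product rule proceeds by approximating $\log(\dist_E)$ by truncated logarithms $\log(\max(\dist_E, 1/n))$, which are Lipschitz, and passing to the limit using the dominations just obtained.

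\medskip

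The second step is to upgrade this to the Gagliardo-type seminorm required by Proposition~\ref{p-Jonsson-Wallin theorem on dsets}. Decompose
\begin{align*}
 |f(\x) - f(\y)| \le |u(\x) - u(\y)|\,|\log \dist_E(\x)| + |u(\y)|\,|\log \dist_E(\x) - \log \dist_E(\y)|
\end{align*}
and use the elementary Lipschitz bound $|\log a - \log b| \le |a-b|/\min(a,b)$ together with $|\dist_E(\x) - \dist_E(\y)| \le |\x - \y|$. Splitting the integral accordingly, the first piece is controlled, via Hölder in the double integral, by the Besov seminorm of the global extension $v \in W^{1,p}(\R^d) \hookrightarrow B^{p,p}_\sigma(\R^d)$ against the $L^q$-mass of $\log(\dist_E)$; the second piece reduces, after symmetrization and one integration in $\x$, to
\begin{align*}
 \int_\Lambda \left|\frac{u(\y)}{\dist_E(\y)}\right|^r \d \y,
\end{align*}
which is finite because the hypothesis $u/\dist_E \in L^p(\Lambda)$ upgrades to $L^r$ on the bounded set $\Lambda$. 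The integrability of $\dist_E^{-r}$ that makes this symmetrization work is precisely what porosity of $E$ supplies through the Aikawa-dimension bound recalled in Remark~\ref{r-porous}. Proposition~\ref{p-Jonsson-Wallin theorem on dsets}, applied with parameters $(r,s')$ for some $s' \in {]s,1[}$, then yields an extension $g \in B^{r,r}_{s'}(\R^d)$ of $f$.

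\medskip

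Finally, the standard continuous embedding $B^{r,r}_{s'}(\R^d) \hookrightarrow H^{s,r}(\R^d)$ for $s < s'$ gives the extension in the desired Bessel potential space. Positivity a.e.\ is obtained by replacing $g$ with $|g|$: the absolute value is a bounded operation on $H^{s,r}(\R^d)$ for $s \in {]0,1[}$, and on $\Lambda$ we have $|g| = |f|$ since $|f|$ is the original function already. The main obstacle I expect to be the Besov-seminorm bound in the middle step, specifically the treatment of $|\log \dist_E(\x) - \log \dist_E(\y)|$: only after symmetrizing and invoking the $L^p$-hypothesis together with porosity does this term become tame enough to give a finite double integral for arbitrary $s \in {]0,1[}$.
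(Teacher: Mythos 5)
There is a fundamental mismatch here: what you have written is not a proof of Proposition~\ref{p-Jonsson-Wallin theorem on dsets} at all, but rather a proof sketch of Lemma~\ref{Lem: Logdist lemma}, the result that \emph{consumes} this proposition. The statement you were asked to prove is the Jonsson--Wallin extension theorem itself: for a $d$-set $\Lambda$, any $f$ with finite intrinsic Besov norm on $\Lambda$ extends linearly to a function in $B_s^{p,p}(\R^d)$. Your argument explicitly invokes ``Proposition~\ref{p-Jonsson-Wallin theorem on dsets}, applied with parameters $(r,s')$'' as a black box, which makes it circular with respect to the target. In the paper this proposition carries no proof; it is quoted verbatim from Jonsson and Wallin \cite[Thm.~V.1.1]{jons}, and an actual proof would require the machinery of that book: a Whitney decomposition of $\R^d \setminus \overline{\Lambda}$, a partition of unity subordinate to the Whitney cubes, definition of the extension on each cube by averaging $f$ over a comparably sized portion of the $d$-set (using the measure-density property $\mathcal{H}_d(\Lambda \cap B(\x,r)) \asymp r^d$), and then the estimate of the resulting global Gagliardo seminorm by the intrinsic one. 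None of these ingredients appears in your write-up, so as a proof of the stated proposition there is nothing to evaluate.

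That said, the material you did produce tracks the paper's own proof of Lemma~\ref{Lem: Logdist lemma} quite closely: the decomposition by adding and subtracting $u(\y)\log(\dist_E(\x))$, the H\"older splitting $\tfrac{1}{r}=\tfrac{1}{p}+\tfrac{1}{q}$ against $\log(\dist_E)\in L^q(\Lambda)$ via the Aikawa-dimension bound of Remark~\ref{r-porous}, the mean value theorem for the logarithm combined with the $1$-Lipschitz property of $\dist_E$, and the final passage $B_{s'}^{r,r}(\R^d)\hookrightarrow H^{s,r}(\R^d)$ with $|g|$ supplying positivity. So your work would be a reasonable blind reconstruction of that lemma --- but it should be resubmitted against the correct statement, and the proposition itself should either be cited as an external result or proved by the Whitney-type construction indicated above.
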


\begin{remark} \label{r-Jonsson-Wallin theorem on dsets}
The Besov spaces are nested with the Bessel potential spaces in the sense that
$B_s^{p,p}(\R^d) \subset H^{s-\eps,p}(\R^d)$ for each $s>0$ and every $\eps \in
{]0,s[}$.
Moreover, $W^{1,p}(\R^d) \subset B_s^{p,p}(\R^d)$. Proofs of these results
can be found e.g.\ in \cite[Sec.~2.3.2/2.5.1]{triebel}.
\end{remark}

\begin{proof}[Proof of Lemma~\ref{Lem: Logdist lemma}]
Using Remark~\ref{r-Jonsson-Wallin theorem on dsets} it suffices to construct
an extension in $B_s^{p,p}$ with the respective properties. Moreover, by the
reverse triangle inequality it is enough to construct any extension $f \in
B_s^{p,p}(\R^d)$ of $u \log \dist_E$ -- then $|f|$ can be used as the required
extension of $|u \log \dist_E|$. These considerations and
Proposition~\ref{p-Jonsson-Wallin theorem on dsets} show that the claim follows
provided
\begin{align}
\label{Eq: Wsr norm of logdist function}
 \|u \log(\dist_D)\|_{L^r(\Lambda)} + \bigg(\iint_{\substack{x,y \in \Lambda \\
|\x-\y| < 1}}  \frac{|u(\x)\log(\dist_E(\x))-
u(\y)\log(\dist_E(\y))|^r}{|\x-\y|^{d+sr}} \; \d \x \; \d \y \bigg)^{1/r}
\end{align}
is finite.

To bound the $L^r$ norm on the left-hand side of \eqref{Eq: Wsr norm of logdist
function} choose $q \in {]1,\infty[}$ such that $\frac{1}{r} = \frac{1}{p} +
\frac{1}{q}$ and apply H\"older's inequality
\begin{align*}
 \|u \log(\dist_E)\|_{L^r(\Lambda)} \leq \|u\|_{L^p(\Lambda)}
\|\log(\dist_D)\|_{L^q(\Lambda)}.
\end{align*}
For the second term on the right-hand we use that the Aikawa dimension of the
porous set $E$ is strictly smaller than $d$, see Remark~\ref{r-porous}. This
entails for some $\alpha < d$ and some $\x \in E$ the estimate
\begin{align*}
 \int_\Lambda \dist_E(\x)^{\alpha-d} \; \d \x \leq \int_{B(\x,2 \diam \Lambda)}
\dist_E(\x)^{\alpha-d} \; \d \x \leq c_\alpha (2 \diam \Lambda)^\alpha < \infty.
\end{align*}
Hence, some negative power of $\dist_E$ is integrable on $\Lambda$ and by
subordination of logarithmic growth $\log(\dist_E) \in L^q(\Lambda)$ follows.
Altogether, $u \log(\dist_E) \in L^r(\Lambda)$ taking care of the first term in
\eqref{Eq: Wsr norm of logdist function}.

By symmetry the domain of integration for the second term on the left-hand side
of \eqref{Eq: Wsr norm of logdist function} can be restricted to $\dist_E(\x)
\geq \dist_E(\y)$. By adding and subtracting the term $u(\y) \log(\dist_E(\x))$
it in fact suffices to prove that
\begin{align}
\label{Eq1: Wsr seminorm logdist function}
\bigg(\int_\Lambda \int_\Lambda \frac{|u(\x)-u(\y)|^r}{|\x-\y|^{d+sr}}
|\log(\dist_E(\x))|^r \; \d \x \; \d \y \bigg)^{1/r}
\end{align}
and
\begin{align}
\label{Eq2: Wsr seminorm logdist function}
\bigg(\int_\Lambda |u(\y)|^r \int_{\substack{\x \in \Lambda \\ \dist_E(\x) \geq
\dist_E(\y)}} \frac{|\log(\dist_E(\x))-\log(\dist_E(\y))|^r}{|\x-\y|^{d+sr}}
\; \d \x \; \d \y\bigg)^{1/r}
\end{align}
are finite. Fix $s< t< 1$, write \eqref{Eq1: Wsr seminorm logdist function} in
the form
\begin{align*}
&\bigg(\int_\Lambda \int_\Lambda \frac{|u(\x)-u(\y)|^r}{|\x-\y|^{dr/p+tr}}
\frac{|\log(\dist_E(\x))|^r}{|\x-\y|^{dr/q + sr - tr}} \; \d \x \; \d \y
\bigg)^{1/r}
\intertext{and apply H\"older's inequality with $\frac{1}{r} = \frac{1}{p} +
\frac{1}{q}$ to bound it by}
&\leq \bigg(\int_\Lambda \int_\Lambda \frac{|u(\x)-u(\y)|^p}{|\x-\y|^{d+tp}}\;
\d \x \; \d \y \bigg)^{1/p}
 \bigg(\int_\Lambda \int_\Lambda
\frac{|\log(\dist_E(\x))|^q}{|\x-\y|^{d+(s-t)q}} \; \d \y \; \d \x
\bigg)^{1/q}\\
&\leq \|\log(\dist_E)\|_{L^q(\Lambda)} \bigg(\int_\Lambda \int_\Lambda
\frac{|u(\x)-u(\y)|^p}{|\x-\y|^{d+tp}}\; \d \x \; \d \y \bigg)^{1/p}
\bigg(\int_{|\y| \leq \diam(\Lambda)} \frac{1}{|\y|^{d+(s-t)q}} \; \d
\y \bigg)^{1/q}
\end{align*}
Now, $\log(\dist_E) \in L^q(\Lambda)$ has been proved above and the third
integral is absolutely convergent since $d+(s-t)q < d$. Finally note that by
assumption $u$ has an extension $v \in W^{1,p}(\R^d)$. Since $W^{1,p}(\R^d)
\subset B_s^{p,p}(\R^d)$ the middle term above is finite as well, see
Remark~\ref{r-Jonsson-Wallin theorem on dsets}.

It remains to show that the most interesting term \eqref{Eq2: Wsr seminorm
logdist function} is finite. Here, the additional assumptions on $u$, $s$ and
$r$ enter the game. By the mean value theorem for the logarithm and since
$\dist_E$ is a contraction, the $r$-th power of this term is bounded above by
\begin{align*}
&\int_\Lambda |u(\y)|^r \int_{\substack{\x \in \Lambda \\ \dist_E(\x) \geq
\dist_E(\y)}} \frac{|\dist_E(\x) - \dist_E(\y)|^r}{\dist_E(\y)^r|\x-\y|^{d+sr}}
\; \d \x \; \d \y \\
\leq &\int_\Lambda \bigg|\frac{u(\y)}{\dist_E(\y)} \bigg|^r \int_{\Lambda}
\frac{|\x-\y|^r}{|\x-\y|^{d+sr}} \; \d \x \; \d \y\\
\leq  &\int_\Lambda \bigg|\frac{u(\y)}{\dist_E(\y)} \bigg|^r \; \d y
\int_{|\x| \leq \diam(\Lambda)} \frac{1}{|\x|^{d+r(s-1)}} \; \d \x.
\end{align*}
Now, the integral with respect to $\x$ is finite since $r(s-1) < 0$. The
integral with respect to $\y$ is finite since by assumption
$\frac{u}{\dist_E}$ is $p$-integrable on the bounded domain $\Lambda$ and thus
$r$-integrable for every $r<p$.
\end{proof}

On noting that by Definition~\ref{d-porous} a subset of a porous set is again
porous, Lemma~\ref{Lem: Logdist lemma} applies to the bounded $d$-set
$\Omega_\bigstar$ and the porous set $D_\bigstar \subset D$. Moreover, $w = (1-
\eta)u \in W^{1,p}(\Omega_\bigstar)$ has the extension $\widehat{w} \in
W^{1,p}(\R^d)$ and satisfies
\begin{align*}
 \int_{\Omega_\bigstar} \bigg| \frac{w(\x)}{\dist_{D_\bigstar}(\x)} \bigg|^p \;
\d \x
\leq \|1-\eta\|_\infty \int_{\Omega} \bigg| \frac{u(\x)}{\dist_{D}(\x)}
\bigg|^p < \infty.
\end{align*}
Hence we can record:

\begin{corollary}\label{c-extension wlogdist}
For every $r \in {]1,p[}$ and every $s \in {]0,1[}$ the
function $|w \log(\dist_{D_\bigstar})|$ defined on $\Omega_\bigstar$
has an extension $f_{s,r} \in H^{s,r}(\R^d)$ that is positive almost everywhere.
\end{corollary}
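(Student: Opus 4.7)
The plan is simply to verify that the hypotheses of Lemma~\ref{Lem: Logdist lemma} are met in the present setting with $\Lambda = \Omega_\bigstar$, $E = D_\bigstar$ and $u = w$, and then invoke that lemma directly.

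The first hypothesis is that $\Omega_\bigstar$ be a bounded $d$-set. Boundedness is clear since $\Omega_\bigstar \subset \Omega$. Each constituent piece $\Omega \cap U_{\x_j}$ is a $W^{1,p}$-extension domain by assumption and therefore a $d$-set by Proposition~\ref{p-extension implies dset}; the $d$-set property transfers to the finite union $\Omega_\bigstar$ since the upper bound $|\Omega_\bigstar \cap B(\x,r)| \leq |B(\x,r)|$ is automatic and, for any $\x \in \Omega_\bigstar$, I may pick an index $j$ with $\x \in \Omega \cap U_{\x_j}$ and use the lower $d$-bound of that single piece for all sufficiently small radii. The second hypothesis, that $D_\bigstar$ be closed and porous, is immediate: closedness is built into the definition $D_\bigstar = \overline{D \cap U_\bigstar}$, and inspection of Definition~\ref{d-porous} reveals that any subset of a porous set is porous (the same witnessing ball $B(\y,\kappa r)$ works), so porosity of $D_\bigstar$ is inherited from the porosity of $D$.

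The remaining hypotheses concern $w$. Since $w = (1-\eta)u$ with $1-\eta \in C_0^\infty(\R^d)$, certainly $w \in W^{1,p}(\Omega_\bigstar)$, and Step~2 has already produced an extension $\widehat{w} \in W^{1,p}(\R^d)$. Finally, from $D_\bigstar \subseteq D$ I obtain $\dist_{D_\bigstar} \geq \dist_D$ pointwise, and combining this with $|1-\eta| \leq 1$ yields
\begin{equation*}
 \int_{\Omega_\bigstar} \left|\frac{w}{\dist_{D_\bigstar}}\right|^p \dd \x
 \leq \int_{\Omega} \left|\frac{u}{\dist_D}\right|^p \dd \x < \infty
\end{equation*}
by the standing hypothesis on $u$. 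All hypotheses of Lemma~\ref{Lem: Logdist lemma} are thus satisfied, and for every $r \in {]1,p[}$ and every $s \in {]0,1[}$ the lemma delivers the required extension $f_{s,r} \in H^{s,r}(\R^d)$ of $|w \log(\dist_{D_\bigstar})|$ that is positive almost everywhere. There is no serious obstacle; the only point that warrants a moment of care is the transfer of the $d$-set property to the union $\Omega_\bigstar$, and that reduces to the local argument indicated above.
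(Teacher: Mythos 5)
Your proof is correct and takes essentially the same route as the paper: the corollary is exactly the application of Lemma~\ref{Lem: Logdist lemma} with $\Lambda = \Omega_\bigstar$, $E = D_\bigstar$ and $u = w$, verified via the same three observations (the $d$-set property of the finite union $\Omega_\bigstar$, porosity of $D_\bigstar$ as a subset of the porous set $D$, and the bound $|w|/\dist_{D_\bigstar} \le |u|/\dist_D$ giving the $L^p$ hypothesis). Your explicit local argument for transferring the $d$-set property to the union is a sound elaboration of a step the paper merely asserts ``can readily be checked''.
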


\subsection*{Step 5: Re-inspecting the right-hand side of
\eqref{e-estimate_tracewidehat}}

We return to \eqref{e-estimate_tracewidehat}. Given $r \in {]1,p[}$ and $s
\in {]0,1[}$ let $f_{s,r} \in H^{s,r}(\R^d)$ be as in
Corollary~\ref{c-extension wlogdist}. By \eqref{e-Lebesgue} we can infer
\begin{align*}
 \limsup_{r \to 0} \frac{1}{r^d} \int_{B(\y,r) \cap \Omega_\bigstar} |w
\log(\dist_D)| \; \d \x
\leq \limsup_{r \to 0} \frac{1}{r^d} \int_{B(\y,r)} f_{s,r} \; \d \x
<
\infty
\end{align*}
for $(s,r)$-quasievery $\y \in D_\bigstar \setminus N$. By the conclusion of
Step~3 this implies $\widehat{\w}(\y) = 0$ for $(s,r)$-quasievery $\y \in
D_\bigstar \setminus N$. To proceed further, we distinguish two cases:

\begin{enumerate}
 \item It holds $p \leq d$. Since the product $sr < p\leq d$ can get
 arbitrarily close to $p$, Lemma~\ref{l-transformation of capacities}
 yields for every $r \in {]1,p[}$ that $\widehat{\w} = 0$ holds
 $(1,r)$-quasieverywhere on $D_\bigstar \setminus N$. Moreover, since
 $C_{1,p}(N) = 0$ by definition, $\widehat{\w} = 0$ holds even
 $(1,r)$-quasieverywhere on $D_\bigstar$.

 \item It holds $p > d$. Then $\widehat{\w}$ is the continuous
 representative of $\widehat{w} \in W^{1,p}(\R^d)$ and $N$ is empty, see the
 beginning of Step~3. Moreover, we can choose $s$ and $r$ such that $d-l < sr$
 and conclude from the comparison theorem, Theorem~\ref{t-comparison}, that
 $\widehat{\w}$ vanishes $\H_l^\infty$-a.e.\ on $D_\bigstar$. Since $D$ is
 $l$-thick and $U_\bigstar$ is open, for each $\y \in D \cap U_\bigstar$ the
 set $B(\y,r) \cap D \cap U_\bigstar$ coincides with $B(\y,r) \cap D$ provided
 $r>0$ is small enough and thus has strictly positive $\H_l^\infty$-measure.
 So, the continuous function $\widehat{\w}$ has to vanish \emph{everywhere} on
 $D \cap U_\bigstar$ as well as on the closure of the latter set -- which by
 definition is $D_\bigstar$.
\end{enumerate}

\noindent Summing up, $\widehat{\w} = 0$ has been shown to hold
$(1,r)$-quasieverywhere on $D_\bigstar$ for every $r \in {]1,p[}$. From the
beginning of Step~3 we also know that $\widehat{\w}$ vanishes everywhere on
$D \setminus D_\bigstar$ and as $\widehat{w} \in W^{1,p}(\R^d)$ has compact
support, H\"older's inequality yields $\widehat{w} \in W^{1,r}(\R^d)$. Combining
these two observations with Theorem~\ref{t-coincid} we are eventually led to
\begin{align}
\label{e-final observation converse hardy}
 \widehat{w} \in W^{1,p}(\R^d) \cap \bigcap_{1<r<p} W_D^{1,r}(\R^d).
\end{align}
We continue by quoting the following result of Hedberg and Kilpel\"ainen.

\begin{proposition}[{\cite[Cor.~3.5]{hedberg-stability}}]\label{p-stability}
Let $p \in {]1,\infty[}$ and let $\Lambda \subset \R^d$ be a bounded domain
whose boundary is $l$-thick for some $l \in {]d-p,d]}$. Then
\begin{align*}
 W^{1,p}(\Lambda) \cap \bigcap_{1<r<p} W_0^{1,r}(\Lambda) \subset
W_0^{1,p}(\Lambda).
\end{align*}
\end{proposition}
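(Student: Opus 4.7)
The plan is to let $u \in W^{1,p}(\Lambda) \cap \bigcap_{1<r<p} W_0^{1,r}(\Lambda)$ and deduce $u \in W_0^{1,p}(\Lambda)$. First I would extend $u$ by zero to $\tilde u$ on $\R^d$. Fixing any $r \in {]1,p[}$, this extension lies in $W^{1,r}(\R^d)$ because $u \in W_0^{1,r}(\Lambda)$, with distributional gradient equal to $\nabla u$ on $\Lambda$ and zero outside, and since $\nabla u \in L^p(\Lambda)$ and $\tilde u$ has compact support, $\tilde u$ in fact lies in $W^{1,p}(\R^d)$. Via the identification $W_0^{1,p}(\Lambda) = W_{\partial \Lambda}^{1,p}(\Lambda)$ noted after Definition~\ref{d-WkpD} together with Theorem~\ref{t-coincid}, it is enough to show that the $(1,p)$-quasicontinuous representative $\u$ of $\tilde u$ vanishes $(1,p)$-quasieverywhere on $\partial \Lambda$.

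Applying Theorem~\ref{t-coincid} at each exponent $r \in {]1,p[}$ gives $\u = 0$ $(1,r)$-quasieverywhere on $\partial \Lambda$, since the $(1,p)$- and $(1,r)$-quasicontinuous representatives of $\tilde u$ coincide off a set of $(1,r)$-capacity zero, and the latter representative vanishes $(1,r)$-q.e.\ on $\partial \Lambda$ because $u \in W_0^{1,r}(\Lambda)$. The main difficulty is that, by Lemma~\ref{l-transformation of capacities}, $(1,p)$-null sets are automatically $(1,r)$-null for $r<p$ but not the converse; so the per-exponent information is strictly weaker than what is needed, and only the thickness hypothesis can close this gap.

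The geometric input I would invoke is Maz'ya's criterion: $l$-thickness of $\partial \Lambda$ with $l>d-p$ is equivalent to uniform $(1,p)$-fatness, meaning $C_{1,p}(\partial \Lambda \cap B(\y,r)) \geq \gamma\, C_{1,p}(B(\y,r))$ for every $\y \in \partial \Lambda$ and every small $r > 0$. The main technical obstacle — and the nonlinear-potential-theoretic heart of the result — is the self-improvement of fatness due to Lewis and Mikkonen: uniform $(1,p)$-fatness of a closed set forces uniform $(1,q)$-fatness for some $q \in {]1,p[}$. This self-improvement is not at all formal and is really the deep ingredient behind the Hedberg--Kilpel\"ainen theorem.

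With such a $q$ in hand, I would finish as follows. The hypothesis at exponent $r=q$ already gives $\u = 0$ $(1,q)$-quasieverywhere on $\partial \Lambda$; let $N \subset \partial \Lambda$ denote the corresponding exceptional set with $C_{1,q}(N) = 0$. The Wiener-type pointwise Poincar\'{e} estimate available at $(1,q)$-fat boundary points controls $|\u(\y)|$ at every $\y \in \partial \Lambda \setminus N$ by capacitary averages of $\tilde u$ that are seen to vanish because $\tilde u = 0$ a.e.\ off $\Lambda$; this upgrades vanishing from $(1,q)$-q.e.\ to genuine $(1,p)$-regular vanishing on $\partial \Lambda \setminus N$. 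The $(1,p)$-fatness of $\partial \Lambda$ then rules out that $N$ itself carries any piece of $\partial \Lambda$ detectable by $(1,p)$-capacity, so $\u = 0$ holds $(1,p)$-q.e.\ on $\partial \Lambda$. This gives $\tilde u \in W_{\partial \Lambda}^{1,p}(\R^d)$, whence $u \in W_0^{1,p}(\Lambda)$ as required. I expect the passage from $(1,q)$-fatness to the pointwise Wiener estimate, and hence the whole self-improvement machinery, to absorb essentially all the work; once both are granted, the rest is capacity bookkeeping via Theorem~\ref{t-coincid}.
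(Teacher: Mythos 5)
The paper does not prove this proposition; it quotes it as {\cite[Cor.~3.5]{hedberg-stability}}, and Remark~\ref{r-stability} supplies the only work actually done, namely the translation of hypotheses: by \cite[Thm.~1]{juha}, $l$-thickness of $\partial\Lambda$ for some $l\in{]d-p,d]}$ implies that $\R^d\setminus\Lambda$ is uniformly $p$-fat, which is the hypothesis under which Hedberg and Kilpel\"ainen prove the inclusion. Your proposal instead attempts to reconstruct the proof of the cited theorem itself, and it correctly identifies the genuine ingredients -- zero-extension, Theorem~\ref{t-coincid} to reduce to $(1,p)$-q.e.\ vanishing of the quasicontinuous representative on $\partial\Lambda$, and Lewis--Mikkonen self-improvement of uniform fatness together with Wiener-type estimates to gain at an exponent $q<p$. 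Two corrections are in order, though. First, the notion at stake is uniform $p$-fatness of the \emph{complement} $\R^d\setminus\Lambda$, not of $\partial\Lambda$, and the relation to $l$-thickness is not a two-sided equivalence in the direction you state: $l$-thickness of $\partial\Lambda$ with $l>d-p$ implies uniform $p$-fatness of $\R^d\setminus\Lambda$, but the converse already requires the full equivalence \cite[Thm.~1]{juha}, which is itself where the self-improvement is encoded. (In fact, once $l$-thickness with $l>d-p$ is assumed, as in the present proposition, uniform $q$-fatness for any $q\in{]d-l,p]}$ is immediate from that implication; the deep self-improvement is only needed when one starts from uniform $p$-fatness alone.)

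Second, and more seriously, the last step of your sketch is a genuine gap. You obtain $\u=0$ off a set $N$ with $C_{1,q}(N)=0$, but, as you yourself note from Lemma~\ref{l-transformation of capacities}, $C_{1,q}$-nullity does \emph{not} imply $C_{1,p}$-nullity when $q<p$, so one cannot conclude $\u=0$ $(1,p)$-q.e.\ from this alone. The closing sentence, that $(1,p)$-fatness of $\partial\Lambda$ ``rules out that $N$ carries any $(1,p)$-detectable piece'', is not an argument: fatness is a lower bound on the capacity of $\partial\Lambda$ in balls, and says nothing about the capacity of a subset $N$ produced as a $(1,q)$-exceptional set. Making this step rigorous is exactly the nontrivial content of Hedberg and Kilpel\"ainen's proof, and it cannot be dismissed as ``capacity bookkeeping via Theorem~\ref{t-coincid}''. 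If you wish to avoid this, the honest route (and the one the paper takes) is to cite the Hedberg--Kilpel\"ainen corollary and verify only the hypothesis translation.
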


\begin{remark}\label{r-stability}
In \cite{hedberg-stability} the requirement on $\Lambda$ is that its complement
is uniformly $p$-fat -- a property that by the ingenious characterization in
\cite[Thm.~1]{juha} holds for every bounded set with $l$-thick boundary provided
$l \in {]d-p,d]}$. 
\end{remark}

In order to apply this result to the case of mixed boundary conditions, we
proceed similarly to the proof of Theorem~\ref{t-hardy}: With $B \subset \R^d$ 
an open ball that contains the compact support of $\widehat{w}$ define again
\begin{align*}
 \mathcal C := \{ M \subset B \setminus D : M \text{ open, connected
	and } \Omega \subset M \}
\end{align*}
and
\begin{align*}
 \Omega_\bullet :=
	\bigcup_{M \in \mathcal C} M.
\end{align*}
Then $\partial \Omega_\bullet \in \{D, D \cup \partial B\}$ by
Corollary~\ref{c-002HE}, subsequent to which it is also shown that $\partial
\Omega_\bullet$ is $m$-thick for some $m \in {]d-p,d]}$. Finally, let $\eta \in
C_0^\infty(B)$ be identically one on the support of $\widehat{w}$. As
$\varphi \mapsto (\eta \varphi)|_{\Omega_\bullet}$ induces a bounded
operator $W_D^{1,p}(\R^d) \to W_0^{1,p}(\Omega_\bullet)$, it follows from
\eqref{e-final observation converse hardy} that
\begin{align*}
  \widehat{w}|_{\Omega_\bullet} = (\eta \widehat{w})|_{\Omega_\bullet} \in
W^{1,p}(\Omega_\bullet) \cap \bigcap_{1<r<p} W_0^{1,r}(\Omega_\bullet)
\end{align*}
and thus $\widehat{w}|_{\Omega_\bullet} \in W_0^{1,p}(\Omega_\bullet)$ thanks to
Proposition~\ref{p-stability}. Since by construction $\Omega \subset
\Omega_\bullet$  and $D \subset \partial \Omega_\bullet$, we eventually conclude
\begin{align*}
 w = \widehat{w}|_\Omega \in W_D^{1,p}(\Omega)
\end{align*}
and the proof is complete. $\hfill \square$

\section{A Generalization} \label{sec-general}
\noindent If one asks: `What is the most restricting condition in
Theorem~\ref{t-hardy}?', the answer doubtlessly is the assumption that a
\emph{global} extension operator shall exist. Certainly, this excludes all
geometries that include cracks not belonging completely to the Dirichlet
boundary part as in the subsequent Figure.
\begin{figure}[htbp]
\centerline{\includegraphics[scale=0.4]{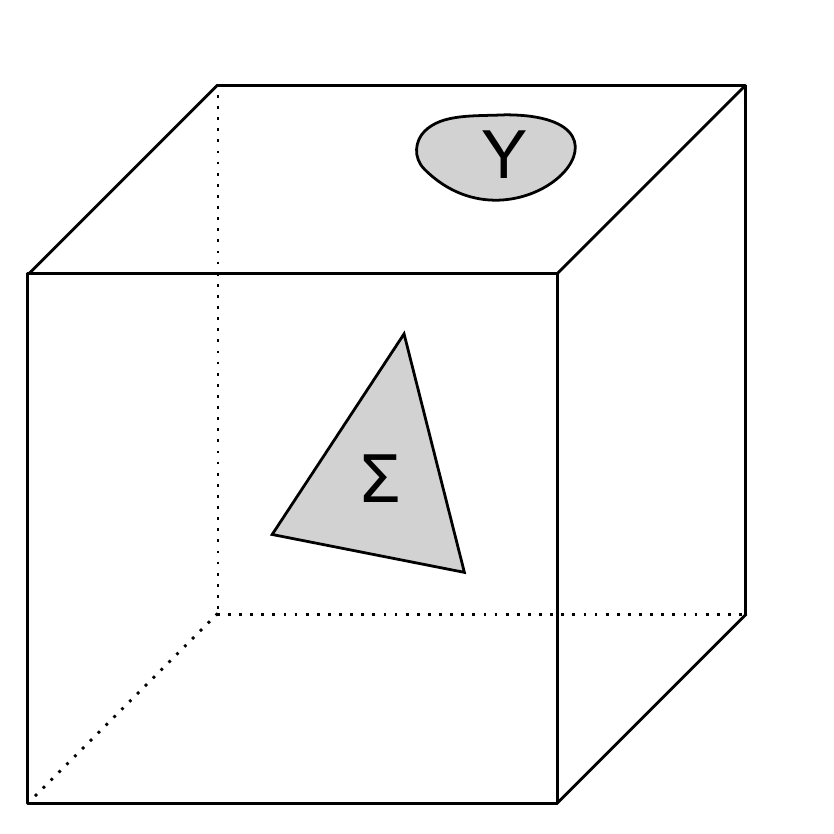}}
\caption{\label{fig-fehlendes_Dreieck} 
The domain $\Omega$ is the cube minus
 the triangle $\Sigma$. The Dirichlet boundary part $D$ consists exactly of the
 six outer sides of the cube minus the droplet $\Upsilon$ on the cover plate.}
\end{figure}

Since the distance function $\dist_D$ measures only the distance to the
Dirichlet boundary part $D$, points in $\partial \Omega$ that are far from $D$
should not be of great relevance in view of the Hardy
inequality~\eqref{e-hardy0}. In the following considerations we intend to make
this precise. Let $U,V \subset \R^d$ be two open, bounded sets with the
properties 
\begin{equation} \label{e-propUV}
 D \subset U, \quad \overline V \cap D = \emptyset, \quad \overline \Omega
	\subset U \cup V.
\end{equation}
The philosophy behind this is to take $U$ as a small neighborhood of $D$ which
-- desirably -- excludes the `nasty parts' of $\partial \Omega \setminus D$.
More properties of $U,V$ will be specified below. Let $\eta_U \in
C^\infty_0(U), \eta_V \in C^\infty_0(V)$ be two functions with $\eta_U + \eta_V
= 1$ on $\overline \Omega$. Then one can estimate
\[ \Bigl( \int_\Omega |u|^p \dist_D^{-p} \; \mathrm{d} \x \Bigr)^{1/p}
	\le \Bigl( \int_{U \cap \Omega} |\eta_U u|^p \dist_D^{-p}
	\; \mathrm{d} \x \Bigr)^{1/p} + \Bigl( \int_{V \cap \Omega}
	|\eta_V u|^p \dist_D^{-p} \; \mathrm{d} \x \Bigr)^{1/p}.
\]
Since $\dist_D$ is larger than some $\epsilon > 0$ on $\mathrm{supp}(\eta_V)
\subset V$, the second term can be estimated by $\frac{1}{\epsilon} \bigl(
\int_{\Omega} |u|^p \; \mathrm{d} \x \bigr)^{1/p}$. If one assumes, as
above, Poincar\'e's inequality, then this term may be estimated as required. In
order to provide an adequate estimate also for the first term, we introduce the
following assumption.

\begin{assumption} \label{a-1}
 The set $U$ from above can be chosen in such a way that $\Lambda := \Omega
 \cap U$ is again a domain and if one puts $\Gamma := (\partial \Omega
 \setminus D) \cap U$ and $E := \partial \Lambda \setminus \Gamma$, then there
 is a linear, continuous extension operator $\mathfrak F : W^{1,p}_E(\Lambda)
 \to W^{1,p}_E(\R^d)$.
\end{assumption}
Clearly, this assumption is weaker than Condition~\ref{t-hardy:iii} in
Theorem~\ref{t-hardy}; in other words: Condition~\ref{t-hardy:iii} in
Theorem~\ref{t-hardy} requires Assumption~\ref{a-1} to hold for an open set $U
\supset \overline \Omega$.

Let us discuss the sense of Assumption~\ref{a-1} in extenso. Philosophically
spoken, it allows to focus on the extension not of the functions $u$ but the
functions $\eta_U u$, which live on a set whose boundary does (possibly) not
include the `nasty parts' of $\partial \Omega \setminus D$ that are an
obstruction against a global extension operator. In detail: one first observes
that, for $\eta = \eta_U \in C^\infty_0(U)$ and $v \in W^{1,p}_D(\Omega)$, the
function $\eta v|_\Lambda$ even belongs to $W^{1,p}_E(\Lambda)$, see
\cite[Thm. 5.8]{haller}. Secondly, we have by the definition of $E$
\[ \partial U \cap \Omega = (\partial U \cap \Omega) \setminus \Gamma \subset
	\partial \Lambda \setminus \Gamma = E.
\]
This shows that the `new' boundary part $\partial U \cap \Omega$ of $\Lambda$
belongs to $E$ and is, therefore, uncritical in view of extension. Thirdly,
one has $D = D \cap U \subseteq \partial \Omega \cap U \subset
\partial \Lambda$, and it is clear that the `new Dirichlet boundary part' $E$
includes the `old' one $D$. Hence, the extension operator $\mathfrak F$ may be
viewed also as a continuous one between $W^{1,p}_E(\Lambda)$ and
$W^{1,p}_D(\R^d)$. Thus, concerning $v := \eta u = \eta_U u$ one is -- mutatis
mutandis -- again in the situation of Theorem~\ref{t-hardy}: $\eta u \in
W^{1,p}_E(\Lambda) \subset W^{1,p}_D(\Lambda)$ admits an extension
$\mathfrak F(\eta u) \in W^{1,p}_E(\R^d) \subseteq W^{1,p}_D(\R^d)$, which 
satisfies the estimate $\|\mathfrak F(\eta u)\|_{W^{1,p}_D(\R^d)} \le c \|\eta
u\|_{W^{1,p}_D(\Lambda)}$, the constant $c$ being independent from $u$. This
leads, as above, to a corresponding (continuous) extension operator
$\mathfrak F_\bullet : W^{1,p}_E(\Lambda) \to W^{1,p}_0(\Lambda_\bullet)$.
Here, of course, $\Lambda_\bullet$ has again to be defined as the connected
component of $B \setminus D$ that contains $\Lambda$. Thus one may proceed
again as in the proof of Theorem \ref{t-hardy}, and gets, for $u \in
W^{1,p}_D(\Omega)$,
\begin{align*}
  \int_\Omega \Bigl( \frac{|\eta u|}{\dist_D} \Bigr)^p \; \mathrm{d} \x
	&= \int_\Lambda \Bigl( \frac{|\eta u|}{\dist_D} \Bigr)^p
	\; \mathrm{d} \x \le \int_{\Lambda_\bullet} \Bigl(
	\frac{|\mathfrak F_\bullet (\eta u)|}{\dist_{\partial \Lambda_\bullet}}
	\Bigr)^p \; \mathrm{d} \x \le c \|\nabla (\mathfrak F_\bullet
	(\eta u))\|^p_{L^p(\Lambda_\bullet)} \\
  &\le c \|\mathfrak F_\bullet (\eta u)\|^p_{W^{1,p}(\Lambda_\bullet)} \le c
	\|\eta u\|^p_{W^{1,p}(\Lambda)} \le c \bigl( \|u\|_{L^p(\Omega)}^p +
	\|\nabla u\|_{L^p(\Omega)}^p \bigr),
\end{align*}
since $\eta u$ belongs to $W^{1,p}_E(\Lambda) \subset W^{1,p}_D(\Lambda)$.
Exploiting a last time Poincar\'e's inequality, whose validity will be
discussed in a moment, one gets the desired estimate.

When aiming at Poincar\'e's inequality, it seems convenient to follow again the
argument in the proof of Proposition~\ref{t-constant}: as pointed out above, the
property $\mathds{1} \notin W^{1,p}_D(\Omega)$ has to do only with the local
behavior of $\Omega$ around the points of $D$,
cf.\@ Lemma~\ref{l-constantzero}. Hence, this will not be discussed further
here.

Concerning the compactness of the embedding $W^{1,p}_D(\Omega) \hookrightarrow
L^p(\Omega)$, one does not need the existence of a \emph{global} extension
operator $\mathfrak E : W^{1,p}_D(\Omega) \to W^{1,p}(\R^d)$. In fact, writing
for every $v \in W^{1,p}_D(\Omega)$ again $v = \eta_U v + \eta_V v$ and
supposing Assumption~\ref{a-1}, one gets the following: 

If $\{v_k\}_k$ is a bounded sequence in $W^{1,p}_D(\Omega)$, then the sequence
$\{\eta_U v_k|_\Lambda\}_k$ is bounded in $W^{1,p}_E(\Lambda)$. Due to the
extendability property, this sequence contains a subsequence $\{\eta_U
v_{k_l}|_\Lambda\}_l$ that converges in $L^p(\Lambda)$ to an element $v_U$.
Thus, $\{\eta_U v_{k_l}\}_l$ converges to the function on $\Omega$ that equals
$v_U$ on $\Lambda$ and $0$ on $\Omega \setminus \Lambda$. The elements $\eta_V
v_k$ in fact live on the set $\Pi := \Omega \cap V$ and
are zero on $\Omega \setminus V$. In particular they are zero in a
neighborhood of $D$. Moreover, they form a bounded subset of $W^{1,p}(\Pi)$.
Therefore it makes sense to require that $\Pi$ is again a domain, and,
secondly that $\Pi$ meets one of the well-known compactness criteria
$W^{1,p}(\Pi) \hookrightarrow L^p(\Pi)$, cf.\@ \cite[Ch.~1.4.6]{mazsob}. Keep in
mind that such requirements are much weaker than the global
$W^{1,p}$-extendability, and in particular include the example in
Figure~\ref{fig-fehlendes_Dreieck}, as long as the triangle $\Sigma$ has a
positive distance to the six outer sides of the cube. Resting on these criteria,
one obtains again the convergence of a subsequence $\{\eta_V v_{k_l}|_{\Pi}\}_l$
that converges in $L^p(\Pi)$ towards a function $v_V$. The sequence
$\{\eta_V v_{k_l}\}_l$ then converges in $L^p(\Omega)$ to a function that equals
$v_V$ on $\Pi$ and zero on $\Omega \setminus V$.

Altogether, we have extracted a subsequence of $\{v_k\}_k$ that converges in
$L^p(\Omega)$.

\begin{remark} \label{r-zusammh}
 In fact one does not really need that $\Pi$ is connected. By similar
 arguments as above it suffices to demand that it splits up in at most finitely
 many components $\Pi_1, \ldots, \Pi_n$, such that each of these admits the
 compactness of the embedding $W^{1,p}(\Pi_j) \hookrightarrow L^p(\Pi_j)$.
\end{remark}

We summarize these considerations in the following theorem.

\begin{theorem} \label{t-hardy1}
 Let $\Omega \subset \R^d$ be a bounded domain and $D \subset \partial \Omega$
 be a closed part of the boundary. Suppose that the following three conditions
 are satisfied:
 \begin{enumerate}
  \item The set $D$ is $l$-thick for some $l \in {]d-p,d]}$.
  \item The space $W^{1,p}_D(\Omega)$ can be equivalently normed by
	$\|\nabla \cdot \|_{L^p(\Omega)}$.
  \item There are two open sets $U, V \subset \R^d$ that satisfy
	\eqref{e-propUV} and $U$ satisfies Assumption~\ref{a-1}.
 \end{enumerate}
 Then there is a constant $c>0$ such that Hardy's inequality
 \begin{equation*}
  \int_\Omega \Bigl| \frac{u}{\dist_D} \Bigr|^p \; \dd \mathrm{x}  \le c
	\int_\Omega |\nabla u|^p \; \dd \mathrm{x}
 \end{equation*}
 holds for all $u \in W^{1,p}_D(\Omega)$.
\end{theorem}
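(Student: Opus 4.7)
The plan is to follow the strategy laid out in the discussion preceding the theorem: localize via a partition of unity subordinate to the open cover $\{U,V\}$ of $\overline\Omega$, and reduce the local piece near $D$ to the pure Dirichlet case handled in Theorem~\ref{t-hardy}.

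First, I would fix $\eta_U\in C_0^\infty(U)$ and $\eta_V\in C_0^\infty(V)$ with $\eta_U+\eta_V\equiv 1$ on $\overline\Omega$, and split $u=\eta_U u+\eta_V u$ for $u\in W^{1,p}_D(\Omega)$. Minkowski's inequality in $L^p(\Omega,\dist_D^{-p}\,\dd\x)$ reduces matters to bounding the two summands separately. The $V$-summand is routine: compactness of $\overline V$ and $\overline V\cap D=\emptyset$ force $\dist_D\ge\eps$ on $\supp\eta_V$ for some $\eps>0$, so
\[
\int_\Omega\Bigl|\frac{\eta_V u}{\dist_D}\Bigr|^p\dd\x \le \eps^{-p}\|u\|_{L^p(\Omega)}^p \le c\|\nabla u\|_{L^p(\Omega)}^p
\]
by Assumption~(ii).

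For the $U$-summand, set $\Lambda:=\Omega\cap U$, $\Gamma:=(\partial\Omega\setminus D)\cap U$, and $E:=\partial\Lambda\setminus\Gamma$, so that $D\subseteq E$ and $\partial U\cap\Omega\subseteq E$. I would first establish that $\eta_U u|_\Lambda\in W^{1,p}_E(\Lambda)$: the factor $\eta_U$ forces the support to avoid the ``new'' boundary piece $\partial U\cap\Omega\subseteq E$, while the vanishing of $u$ on $D\subseteq E$ is preserved under multiplication by a $C_0^\infty$ function; this is precisely \cite[Thm.~5.8]{haller}. The local extension operator $\mathfrak F$ from Assumption~\ref{a-1} then yields $\mathfrak F(\eta_U u)\in W^{1,p}_E(\R^d)\subseteq W^{1,p}_D(\R^d)$, with norm controlled by $c\|u\|_{W^{1,p}_D(\Omega)}$. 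From here I would replay the topological construction of Section~\ref{sec-proof}: choose an open ball $B\supset\overline\Omega$ and let $\Lambda_\bullet$ be the union of all open connected subsets of $B\setminus D$ containing $\Lambda$. Lemma~\ref{l-1} together with Corollary~\ref{c-002HE} and the $l$-thickness of $D$ from Assumption~(i) show that $\partial\Lambda_\bullet$ is $m$-thick for some $m\in{]d-p,d]}$. Multiplying $\mathfrak F(\eta_U u)$ by a plateau $\zeta\in C_0^\infty(B)$ with $\zeta\equiv 1$ on $\overline\Omega$ produces an element of $W^{1,p}_0(\Lambda_\bullet)$, and Proposition~\ref{p-juha} delivers
\[
\int_\Omega\Bigl|\frac{\eta_U u}{\dist_D}\Bigr|^p\dd\x \le \int_{\Lambda_\bullet}\Bigl|\frac{\zeta\mathfrak F(\eta_U u)}{\dist_{\partial\Lambda_\bullet}}\Bigr|^p\dd\x \le c\|u\|_{W^{1,p}_D(\Omega)}^p,
\]
which together with a final invocation of Assumption~(ii) closes the estimate.

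The main obstacle I anticipate is the inclusion $\eta_U u|_\Lambda\in W^{1,p}_E(\Lambda)$. It is geometrically natural but combines two vanishing conditions of distinct origin---one inherited from $u\in W^{1,p}_D(\Omega)$ on the ``old'' Dirichlet part $D$, the other manufactured by $\supp\eta_U$ on the ``new'' boundary piece $\partial U\cap\Omega$---and requires a careful approximation argument, approximating $u$ by elements of $C_E^\infty(\Omega)$, cutting off, restricting, and re-approximating inside $C_E^\infty(\Lambda)$. The remaining ingredients ($m$-thickness of $\partial\Lambda_\bullet$, the topological description of $\Lambda_\bullet$, and the plateau truncation) transfer verbatim from the proof of Theorem~\ref{t-hardy}.
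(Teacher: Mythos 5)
Your proposal follows the paper's argument essentially verbatim: the same partition of unity subordinate to $\{U,V\}$, the same elementary bound on the $V$-piece via $\dist_D\ge\eps$ and Poincar\'e, the same key inclusion $\eta_U u|_\Lambda\in W^{1,p}_E(\Lambda)$ via \cite[Thm.~5.8]{haller}, and the same reduction of the $U$-piece to Proposition~\ref{p-juha} through the extension operator of Assumption~\ref{a-1} and the topological construction of $\Lambda_\bullet$. The argument is correct as written.
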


\subsection*{Acknowledgment} 
 In 2012, after we asked him a question, V.~Maz'ya proposed a
  proof \cite{mazcomm} of Theorem \ref{t-hardy} in the case of $D$ a
  $(d-1)$-set that heavily relied on 
  several deep results from his book \cite{mazsob}. Actually, we found, after
  this, references in the literature \cite{korte}, \cite{juha} with a different
  approach that apply directly, provided Lemma~\ref{l-lsets with hausdorff
  content} is established. It was again V.~Maz'ya who drew
  our attention to the fact that something like this lemma is needed.
  We warmly thank him for all that.

  We also thank the anonymous referee who has made many excellent suggestions
  allowing to considerably improve a previous version of this
manuscript.


\end{document}